\providecommand{\U}[1]{\protect\rule{.1in}{.1in}}
\newtheorem{theorem}{Theorem}
\newtheorem{corollary}[theorem]{Corollary}
\newtheorem{definition}[theorem]{Definition}
\newtheorem{lemma}[theorem]{Lemma}
\newtheorem{proposition}[theorem]{Proposition}
\newtheorem{remark}[theorem]{Remark}
\newenvironment{proof}[1][Proof]{\noindent\textbf{#1.} }{\ \rule{0.5em}{0.5em}}
\begin{document}

\title{The sup-completion of a Dedekind complete vector lattice II}
\author{Youssef Azouzi\thanks{The authors are members of the GOSAEF research group}
and Youssef Nasri \thanks{This document is the results of the research project
funded by the National Science Foundation}\\{\small Research Laboratory of Algebra, Topology, Arithmetic, and Order}\\{\small Department of Mathematics}\\COSAEF {\small Faculty of Mathematical, Physical and Natural Sciences of
Tunis}\\{\small Tunis-El Manar University, 2092-El Manar, Tunisia}}
\maketitle

\begin{abstract}
We persist in our investigation of the sup-completion of a Dedekind complete
Riesz space, extending to the broader context of Riesz spaces. some results
initially obtained by Feng, Li, Shen, and also by Erd\"{o}s, and R\'{e}nyi.

\end{abstract}

\section{Introduction}

In this paper, we continue our investigation of the sup-completion of a
Dedekind complete Riesz space started in \cite{L-900}. We delve deeper into
the decomposition of finite and infinite parts, initially introduced in
\cite{L-900}, and further investigate the properties elucidated in that study.
Within our work, we introduce a new concept that we call the `star map' as a
pivotal construct necessary for generalizing results from measure theory or
classical stochastic theory to the domain of Riesz spaces. As we encounter
instances where we seek to apply an inverse operation amidst dealing with
non-invertible elements, we address this issue by introducing the notion of a
'partial inverse'. While briefly discussed in our previous work \cite{L-900},
this concept will be systematically explored here with comprehensive details.
Consider a Dedekind complete Riesz space $X$ with a weak order unit, denoted
by $e.$ Then the universal completion $X^{u}$ of $X$ has a natural structure
of an $f$-algebra, where $e$ serves as the identity element$.$ Each element
$x$ in $X$ functions as a weak unit within the band $B_{x}$ generated by $x$
in $X^{u}.$ Consequently $x$ has an inverse in that band, referred to as the
partial inverse of $x.$ If $x$ is a positive element in the cone $X_{+}^{s},$
where $X^{s}$ denotes the sup-completion of $X,$ we denote by $x^{\ast}$ the
partial inverse of its finite part $x^{f}$. This partial inverse is also
recently used by Roelands and Schwanke in \cite{L-424} and they adopted the
same notation. It is also used in \cite{L-886} to develop a Hahn-Jordan
theorem in Riesz spaces. Our motivation here is to get a Riesz space version
of a result obtained by Feng, Li and Shen in \cite{a-1843}. A weaker form of
this result was obtained earlier by Erd\"{o}s and R\'{e}nyi in \cite{a-1971}
that allows to get a generalization of Borel Cantelli Lemma.

Let us give a brief outline of the content of the paper. Section 2 provides
some preliminaries. Sections 3 and 4 are devoted to present new results
concerning the sup-completion of a Dedekind complete Riesz space. In the first
part we investigate finite and infinite parts. The second part deals with
partial inverses of elements of $X^{s}.$ We introduce that map $x\longmapsto
x^{\ast}$ where $x^{\ast}$ is the inverse of $x^{f}$ in the band $B_{x^{f}}.$
Then we prove under some conditions that if $\left(  x_{\alpha}\right)  $
converges to $x$ in order then $x_{\alpha}^{\ast}$ converges in order to
$x^{\ast}.$ In the last section we apply our results to obtain a
generalization of a theorem of Feng, Li and Shen to the setting of Riesz
spaces. The reader is referred to \cite{b-1665} for the definition of the
sup-completion, a fundamental concept in this paper, and to the papers
\cite{L-444} and \cite{L-900} for more informations of that notion. All
unexplained terminology and notation concerning Riesz spaces can be found in
standard references \cite{b-240}, \cite{b-1087} and \cite{b-1089}.

\section{Preliminaries}

We consider a Dedekind complete Riesz space $X.$ We employ $X^{u}$ to
represent its universal completion, while its sup-completion is denoted by
$X^{s}.$ Recall that $X^{s}$ is a lattice ordered cone that contains $X,$ and
which has a greatest element that we denote by $\infty.$ If $B$ is a band in
$X$ then its sup-completion $B^{s}$ is contained in $X^{s}$ (see \cite[Theorem
6]{L-444}) and its greatest element will be denoted by $\infty_{B}.$ More
about the space $X^{s}$ can be found in \cite{L-444,L-900}. We denote by
$\mathcal{B}\left(  X\right)  $ the Boolean algebra of projection bands in
$X$. To a band $B\in\mathcal{B}\left(  X\right)  $ we associate the band
projection $P_{B}$ on $B$ and we use the notation $P^{d}=I-P$ for any band
projection $P.$ We shorten $P_{B_{x}}$ to $P_{x},$ with $B_{x}$ denoting the
principal band generated by $x.$ It should be noted that this notion can be
extended in a natural manner to elements in $X^{s}.$ It was shown indeed in
\cite[Lemma 4]{L-444} that if we define $\pi_{x}\left(  a\right)
=\sup\limits_{n}\left(  a\wedge nx\right)  $ for $a$ in $X_{+}^{s}$ and
$\pi_{x}\left(  a\right)  =\pi_{x}\left(  a^{+}\right)  -\pi_{x}\left(
a^{-}\right)  $ for $a\in X,$ then $\pi_{a}$ is the band projection
$P_{\pi_{a}\left(  e\right)  }.$ We will simply write $P_{x}=P_{\pi_{x}\left(
e\right)  }$ and $B_{x}=R\left(  P_{x}\right)  $ the range of $P_{x}.$ Notice
that for every $x\in X^{s}$ we have $x+\infty=\infty.$ In particular; if $B$
is a band then for every $x\in B,$ $x+\infty_{B}=\infty_{B}.$ For $x\in X^{s}$
we can define its positive and negative parts as $x^{+}=x\vee0$ and
$x^{-}=-\left(  x\wedge0\right)  .$ Then $x^{+}-x^{-}=x.$ (The formula
$a\wedge b+a\vee b=a+b$ is still true in $X^{s}$). These parts can be
characterized by the following property: if $x=a-b$ with $a,b\in X_{+}^{s}$
and $a\wedge b=0,$ then $a=x^{+}$ and $b=x^{-}.$ Indeed we have%
\[
x^{+}=x\vee0=(a-b)\vee0=a\vee b-b=a+b-b=a.
\]
Now as $b\wedge a=b\wedge x^{+}=0$ the equality%
\[
P_{x^{+}}^{d}x=P_{x^{+}}^{d}\left(  x^{+}-x^{-}\right)  =P_{x^{+}}^{d}\left(
a-b\right)
\]
gives $b=x^{-}$ as well. Recall that tow elements in $X_{+}^{s}$ are said to
be disjoint and we write $x\perp y$ if $x\wedge y=0.$

\begin{lemma}
\label{YY2-A}Let $\left(  x_{\alpha}\right)  ,\left(  y_{\alpha}\right)  $ be
two nets in $X_{+}^{s}$ such that $\left(  x_{\alpha}\right)  \perp\left(
y_{\alpha}\right)  .$ Then the following statements hold.

\begin{enumerate}
\item[(i)] $%
{\textstyle\bigvee\limits_{\alpha}}
(x_{\alpha}+y_{\alpha})=%
{\textstyle\bigvee\limits_{\alpha}}
x_{\alpha}+%
{\textstyle\bigvee\limits_{\alpha}}
y_{\alpha}$ and $\bigwedge\limits_{\alpha}(x_{\alpha}+y_{\alpha}%
)=\bigwedge\limits_{\alpha}x_{\alpha}+\bigwedge\limits_{\alpha}y_{\alpha};$

\item[(ii)] $\limsup(x_{\alpha}+y_{\alpha})=\limsup x_{\alpha}+\limsup
y_{\alpha}$ and $\liminf(x_{\alpha}+y_{\alpha})=\liminf x_{\alpha}+\liminf
y_{\alpha}.$
\end{enumerate}
\end{lemma}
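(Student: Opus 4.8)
The plan is to reduce everything to one structural fact: under the disjointness hypothesis, which I read as $x_{\alpha}\wedge y_{\beta}=0$ for \emph{all} indices $\alpha,\beta$, the two families live in complementary bands. I would set $B=\{x_{\alpha}:\alpha\}^{dd}$, so that every $x_{\alpha}\in B$ while every $y_{\beta}\in B^{d}$, and let $P=P_{B}$ be the associated band projection, extended to $X^{s}$ as recalled in the preliminaries (so that $P$ is positive, preserves finite joins, is additive on $X_{+}^{s}$, and satisfies $P+P^{d}=I$). Writing $X:=\bigvee_{\alpha}x_{\alpha}$ and $Y:=\bigvee_{\alpha}y_{\alpha}$, the first step I would record is that $X\in B^{s}$ and $Y\in(B^{d})^{s}$, whence $X\perp Y$; this is where I would invoke the order-continuity of band projections on $X^{s}$ from \cite{L-444,L-900} to get $PY=\bigvee_{\alpha}Py_{\alpha}=0$ (equivalently, the infinite distributive law $X\wedge Y=\bigvee_{\alpha,\beta}(x_{\alpha}\wedge y_{\beta})=0$ in $X^{s}$).

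For the join in (i) I would use that disjoint positive elements add as they join, so by the identity $a\wedge b+a\vee b=a+b$ recalled above, $x_{\alpha}+y_{\alpha}=x_{\alpha}\vee y_{\alpha}$, and then the purely lattice-theoretic identity $\bigvee_{\alpha}(x_{\alpha}\vee y_{\alpha})=\bigl(\bigvee_{\alpha}x_{\alpha}\bigr)\vee\bigl(\bigvee_{\alpha}y_{\alpha}\bigr)=X\vee Y$. Since $X\perp Y$ from the previous step, $X\vee Y=X+Y$, which is the claim. For the meet in (i) I would argue almost without order-continuity: put $z=\bigwedge_{\alpha}(x_{\alpha}+y_{\alpha})$, $x=\bigwedge_{\alpha}x_{\alpha}$, $y=\bigwedge_{\alpha}y_{\alpha}$. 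One inequality is immediate since $x+y\le x_{\alpha}+y_{\alpha}$ for every $\alpha$. For the reverse, apply $P$: because $P$ is a monotone lattice homomorphism with $Py_{\alpha}=0$, from $z\le x_{\alpha}+y_{\alpha}=x_{\alpha}\vee y_{\alpha}$ I get $Pz\le x_{\alpha}$ for all $\alpha$, hence $Pz\le x$; symmetrically $P^{d}z\le y$; adding and using $z=Pz+P^{d}z$ yields $z\le x+y$.

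Part (ii) I would deduce from (i) by iterating it, which is precisely why I set things up through the fixed band $B$. Fixing an index $\alpha$, the tails $(x_{\beta})_{\beta\ge\alpha}$ and $(y_{\beta})_{\beta\ge\alpha}$ are still cross-disjoint, so the join formula of (i) gives $\bigvee_{\beta\ge\alpha}(x_{\beta}+y_{\beta})=u_{\alpha}+v_{\alpha}$, where $u_{\alpha}=\bigvee_{\beta\ge\alpha}x_{\beta}\in B^{s}$ and $v_{\alpha}=\bigvee_{\beta\ge\alpha}y_{\beta}\in(B^{d})^{s}$. The nets $(u_{\alpha})$ and $(v_{\alpha})$ again lie in the complementary bands $B^{s}$ and $(B^{d})^{s}$, hence are cross-disjoint, so the meet formula of (i) applied to them gives $\bigwedge_{\alpha}(u_{\alpha}+v_{\alpha})=\bigwedge_{\alpha}u_{\alpha}+\bigwedge_{\alpha}v_{\alpha}$, i.e. $\limsup(x_{\alpha}+y_{\alpha})=\limsup x_{\alpha}+\limsup y_{\alpha}$. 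The $\liminf$ statement follows by the symmetric argument, interchanging the roles of $\bigvee$ and $\bigwedge$.

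The main obstacle is not the lattice algebra but the fact that all of this takes place in the cone $X^{s}$ rather than in a genuine Riesz space: I must ensure that band projections extend to $X^{s}$ and commute with the infinite suprema used to prove $X\perp Y$, and that the arithmetic involving the top elements $\infty_{B}$ stays consistent—e.g. that $X\vee Y=X+Y$ and $z=Pz+P^{d}z$ survive when one of the suprema equals $\infty_{B}$. These facts cannot be borrowed from the classical theory and must be drawn from the sup-completion machinery of \cite{L-444,L-900}; once they are available, the meet direction is essentially self-contained (needing only monotonicity, preservation of finite joins, and $P+P^{d}=I$), whereas the join direction and the disjointness of the suprema are where the order-continuity in $X^{s}$ is genuinely used.
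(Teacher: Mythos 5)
Your proof is correct and follows essentially the same route as the paper's: both rest on the fact (from \cite{L-900}) that cross-disjointness of the nets forces $\bigvee_{\alpha}x_{\alpha}\perp\bigvee_{\alpha}y_{\alpha}$, both obtain the nontrivial half of the infimum formula by splitting $\bigwedge_{\alpha}(x_{\alpha}+y_{\alpha})$ with complementary band projections, and both deduce (ii) by applying (i) to the tails of the nets. The only differences are cosmetic: you package the argument through the single band $B=\{x_{\alpha}\}^{dd}$ and the identity $P+P^{d}=I$ on $X_{+}^{s}$, whereas the paper uses the projections $P_{x}$, $P_{y}$ onto the bands generated by the two suprema together with $P_{x+y}=P_{x}+P_{y}$.
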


\begin{proof}
Put $x=%
{\textstyle\bigvee\limits_{\alpha}}
x_{\alpha}$ and $y=%
{\textstyle\bigvee\limits_{\alpha}}
y_{\alpha}.$ It follows from \cite[Lemma 11.(iii)]{L-900}, that $%
{\textstyle\bigvee\limits_{\alpha}}
x_{\alpha}\wedge%
{\textstyle\bigvee\limits_{\alpha}}
y_{\alpha}=0$ and hence $P_{x+y}=P_{x}+P_{y}.$

(i)The inequality $x+y\geq%
{\textstyle\bigvee\limits_{\alpha}}
\left(  x_{\alpha}+y_{\alpha}\right)  $ is obvious. On the other hand we have
$%
{\textstyle\bigvee\limits_{\alpha}}
\left(  x_{\alpha}+y_{\alpha}\right)  \geq x$ and $%
{\textstyle\bigvee\limits_{\alpha}}
\left(  x_{\alpha}+y_{\alpha}\right)  \geq y,$ which gives%
\[%
{\textstyle\bigvee\limits_{\alpha}}
\left(  x_{\alpha}+y_{\alpha}\right)  \geq x\vee y=x+y,
\]
where the last equality holds because $x\wedge y=0.$

For the second part we have clearly%
\[
\bigwedge\limits_{\alpha}(x_{\alpha}+y_{\alpha})\geq\bigwedge\limits_{\alpha
}x_{\alpha},\bigwedge\limits_{\alpha}y_{\alpha},
\]
and then as $\bigwedge\limits_{\alpha}x_{\alpha}$ and $\bigwedge
\limits_{\alpha}y_{\alpha}$ are disjoint we get%
\[
\bigwedge\limits_{\alpha}(x_{\alpha}+y_{\alpha})\geq\bigwedge\limits_{\alpha
}x_{\alpha}+\bigwedge\limits_{\alpha}y_{\alpha},
\]
On the other hand we have%
\[
P_{x}\bigwedge\limits_{\alpha}(x_{\alpha}+y_{\alpha})\leq x_{\beta,}%
\]
for all $\beta$ and then $P_{x}\bigwedge\limits_{\alpha}(x_{\alpha}+y_{\alpha
})\leq\bigwedge\limits_{\alpha}x_{\alpha}.$ Similarly we get $P_{y}%
\bigwedge\limits_{\alpha}(x_{\alpha}+y_{\alpha})\leq\bigwedge\limits_{\alpha
}y_{\alpha}$ and so%
\[
\bigwedge\limits_{\alpha}(x_{\alpha}+y_{\alpha})=P_{x}\bigwedge\limits_{\alpha
}(x_{\alpha}+y_{\alpha})+P_{y}\bigwedge\limits_{\alpha}(x_{\alpha}+y_{\alpha
})\leq\bigwedge\limits_{\alpha}x_{\alpha}+\bigwedge\limits_{\alpha}y_{\alpha
},
\]
\newline which ends the proof of (i).

(ii) This follows easily from (i).
\end{proof}

The above lemma is not valid if we have only $x_{\alpha}\perp y_{\alpha}$ for
each $\alpha.$ Take, for example, $X=%
\mathbb{R}
^{2}$, $x_{1}=\left(  1,0\right)  =y_{2}$ and $x_{2}=\left(  0,1\right)
=$\ $y_{1}.$

The following lemma gives another case when equalities in Lemma \ref{YY2-A}%
.(i) hold.

\begin{lemma}
\label{YY2-Q}Let $\left(  x_{\alpha}\right)  _{\alpha\in A}$ and $\left(
y_{\alpha}\right)  _{\alpha\in A}$ be two decreasing nets in $X_{+}^{s}$ then
$\inf(x_{\alpha}+y_{\alpha})=\inf x_{\alpha}+\inf y_{\alpha}.$
\end{lemma}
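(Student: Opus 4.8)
The plan is to prove the two inequalities separately, writing $x=\inf_\alpha x_\alpha$ and $y=\inf_\alpha y_\alpha$. The inequality $\inf_\alpha(x_\alpha+y_\alpha)\ge x+y$ is immediate: for every $\alpha$ one has $x_\alpha\ge x$ and $y_\alpha\ge y$, so $x_\alpha+y_\alpha\ge x+y$, and thus $x+y$ is a lower bound of the net $(x_\alpha+y_\alpha)$. The whole difficulty lies in the reverse inequality $\inf_\alpha(x_\alpha+y_\alpha)\le x+y$, where one cannot simply ``subtract'' because $X^s$ is only a cone.

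The key reduction I would make is to the single-net statement that adding a fixed element is order continuous on decreasing nets: for fixed $c\in X_+^s$ and a decreasing net $(z_\beta)$ with $\inf_\beta z_\beta=z$, one has $\inf_\beta(c+z_\beta)=c+z$. Granting this, I would pass to the double-indexed family $(x_\alpha+y_\beta)_{(\alpha,\beta)\in A\times A}$, which is decreasing in $(\alpha,\beta)$. Since $A$ is directed and both nets are decreasing, every $x_\alpha+y_\beta$ dominates the diagonal element $x_\gamma+y_\gamma$ for any $\gamma\ge\alpha,\beta$, while the diagonal is a subfamily; hence $\inf_\alpha(x_\alpha+y_\alpha)=\inf_{(\alpha,\beta)}(x_\alpha+y_\beta)$. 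Computing this double infimum as an iterated one and applying the single-net statement twice gives $\inf_{(\alpha,\beta)}(x_\alpha+y_\beta)=\inf_\alpha\big(x_\alpha+\inf_\beta y_\beta\big)=\inf_\alpha(x_\alpha+y)=\big(\inf_\alpha x_\alpha\big)+y=x+y$, which closes the argument.

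The main obstacle is therefore the single-net translation continuity, and this is where the special features of $X^s$ must be handled. I would prove it by splitting along the finite and infinite parts. Let $B$ be the band on which $c$ is infinite and $B^d$ its complement, and recall that band projections are order continuous. On $B$ one has $P_B(c+z_\beta)=\infty_B=P_B(c+z)$ for every $\beta$, so the two sides agree after projecting onto $B$. On $B^d$ the element $c$ is finite, and here adding a finite element is an order isomorphism onto its image, so it preserves the infimum of the decreasing net $(P_{B^d}z_\beta)$; the places where the $z_\beta$ are themselves infinite are absorbed by the same projection bookkeeping, their infinite-part bands forming a decreasing family whose projections again collapse both sides to the relevant top element. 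Reassembling the two bands via $P_B+P_{B^d}=I$ yields $\inf_\beta(c+z_\beta)=c+z$. The delicate point throughout is that cancellation fails in the presence of the greatest element $\infty$, so every manipulation must be routed through order-continuous band projections rather than through subtraction; this translation-continuity fact may alternatively be extracted from the order continuity of addition in $X^s$ recorded in \cite{L-900}.
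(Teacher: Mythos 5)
Your proposal is correct and takes essentially the same route as the paper: the ``single-net translation continuity'' you isolate is exactly Proposition \ref{YY2-k}.(ii), which the paper's proof of Lemma \ref{YY2-Q} invokes for the constant-summand case, and which the paper establishes by the same decomposition you sketch (cancellation for a summand in $X^{u}$, band projections for the infinite part $\infty_{B}$, then reassembly via $y=y^{f}+y^{\infty}$). Your diagonal argument over the product $A\times A$ is just a repackaging of the paper's step of fixing $\beta$, using $y_{\alpha}\leq y_{\beta}$ for $\alpha\geq\beta$ to get $\inf_{\alpha}(x_{\alpha}+y_{\alpha})\leq\inf_{\alpha}x_{\alpha}+y_{\beta}$, and then taking the infimum over $\beta$ with a second application of the translation lemma.
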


\begin{proof}
We will make use of \ref{YY2-k}.(ii) where the equality is proved if one of
the nets is constant. First observe that the inequality%
\[
\inf(x_{\alpha}+y_{\alpha})\geq\inf(x_{\alpha})+\inf(y_{\alpha})
\]
is quite obvious. Fix $\beta$ in $A.$ Then for any $\alpha\geq\beta$ we have%
\begin{align*}
\inf\limits_{\alpha\in A}(x_{\alpha}+y_{\alpha})  &  =\inf\limits_{\alpha
\geq\beta}(x_{\alpha}+y_{\alpha})\leq\inf\limits_{\alpha\geq\beta}\left(
x_{\alpha}+y_{\beta}\right) \\
&  =\inf\limits_{\alpha\geq\beta}x_{\alpha}+y_{\beta}=\inf\limits_{\alpha\in
A}x_{\alpha}+y_{\beta}.
\end{align*}
\newline Hence%
\[
\inf(x_{\alpha}+y_{\alpha})\leq\inf_{\beta}\left(  \inf\nolimits_{\alpha
}x_{\alpha}+y_{\beta}\right)  =\inf(x_{\alpha})+\inf(y_{\beta}).
\]
This completes the proof.
\end{proof}

\begin{lemma}
\label{X1}Let $\left(  x_{\alpha}\right)  _{\alpha\in A}$ be a net in
$X_{+}^{s}$ and $\left(  B_{\alpha}\right)  _{\alpha\in A}$ a net in
$\mathcal{B}\left(  X\right)  $ such that $x_{\alpha}\in B_{\alpha}^{s}$ for
every $\alpha\in A.$ Then $\sup x_{\alpha}\in\left(  \sup B_{\alpha}\right)
^{s},$ $\inf x_{\alpha}\in\left(  \inf B_{\alpha}\right)  ^{s},$ $\limsup
x_{\alpha}\in\left(  \limsup B_{\alpha}\right)  ^{s}$ and $\liminf x_{\alpha
}\in\left(  \liminf B_{\alpha}\right)  ^{s}.$
\end{lemma}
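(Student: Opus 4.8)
The plan is to prove each of the four containments by reducing them to the two fundamental cases of suprema and infima, since limsup and liminf are built from these by the definitions $\limsup x_\alpha = \inf_\beta \sup_{\alpha\ge\beta} x_\alpha$ and $\liminf x_\alpha = \sup_\beta \inf_{\alpha\ge\beta} x_\alpha$, with the band operations $\limsup B_\alpha$ and $\liminf B_\alpha$ defined analogously in the Boolean algebra $\mathcal{B}(X)$. So the heart of the matter is to establish the two statements $\sup x_\alpha \in (\sup B_\alpha)^s$ and $\inf x_\alpha \in (\inf B_\alpha)^s$; the other two then follow by applying these iteratively to the nested sup/inf in the definitions, being careful that the relevant sub-bands sit inside the sub-bands one is taking suprema over.

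For the supremum case, write $B = \sup_\alpha B_\alpha$, the band generated by $\bigcup_\alpha B_\alpha$. Since each $x_\alpha \in B_\alpha^s \subseteq B^s$ and $B^s$ is itself a sup-completion sitting inside $X^s$ (invoking \cite[Theorem 6]{L-444} as recalled in the Preliminaries), $B^s$ is closed under arbitrary suprema of its elements; hence $\sup_\alpha x_\alpha$, being the supremum in $X^s$ of a family of elements of $B^s$, lands in $B^s = (\sup B_\alpha)^s$. The essential point to verify is that the supremum computed in $X^s$ coincides with the supremum computed in $B^s$, i.e. that $B^s$ is closed under the sup operation inherited from $X^s$; this is where the structural fact that $B^s$ is a band-like sub-cone of $X^s$ with its own greatest element $\infty_B$ does the work.

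For the infimum case, set $B = \inf_\alpha B_\alpha$, the band $\bigcap_\alpha B_\alpha$ (equivalently the band with projection $\bigwedge_\alpha P_{B_\alpha}$). The natural approach is to show that $\inf_\alpha x_\alpha$ is left fixed by the band projection $P_B$ and annihilated by its complement, so that it lies in $B^s$. Fix any index $\gamma$; since $\inf_\alpha x_\alpha \le x_\gamma \in B_\gamma^s$, applying the projection $P_{B_\gamma}^d$ kills it, giving $P_{B_\gamma}^d \inf_\alpha x_\alpha = 0$ for every $\gamma$, which forces $\inf_\alpha x_\alpha$ to lie in $\bigcap_\gamma B_\gamma^s$. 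Identifying this intersection with $(\inf_\alpha B_\alpha)^s$ is the crux, and I would handle it by checking that the complementary projection of $\inf B_\alpha$ is the supremum $\bigvee_\gamma P_{B_\gamma}^d$, so that annihilation by every $P_{B_\gamma}^d$ is exactly annihilation by the complement of $P_B$.

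The main obstacle I anticipate is making rigorous the interplay between Boolean operations on bands and the corresponding geometry of their sup-completions inside $X^s$ — in particular, confirming that $(\sup B_\alpha)^s$ genuinely equals the closure of $\bigcup B_\alpha^s$ under $X^s$-suprema, and dually that $(\inf B_\alpha)^s = \bigcap_\alpha B_\alpha^s$. These identities feel intuitively clear but rely on the careful embedding theory of sup-completions of bands from \cite{L-444}; once they are in hand, the element-level arguments above are routine, and the limsup/liminf cases collapse to two applications of the sup and inf cases since limsup and liminf of bands are defined by the same nested Boolean expressions.
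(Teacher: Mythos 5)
Your high-level architecture is fine: reducing $\limsup$ and $\liminf$ to iterated applications of the sup and inf cases is exactly right, and is how the paper disposes of them too. The problem is that both of your core cases rest on structural identities that you flag but never establish: for the supremum, that $B^{s}$ is closed under suprema computed in $X^{s}$; for the infimum, that $\bigcap_{\gamma}B_{\gamma}^{s}=\left(\bigwedge_{\gamma}B_{\gamma}\right)^{s}$, equivalently that $z\in B^{s}$ exactly when $P_{B}^{d}z=0$. These statements are true, but they do not follow from the bare fact, quoted from \cite{L-444}, that $B^{s}$ sits inside $X^{s}$ --- they are precisely where the content of the lemma lives, so as written the crux is deferred rather than proved. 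The same applies to the intermediate steps of your infimum argument: that $z\leq x_{\gamma}\in B_{\gamma}^{s}$ forces $P_{B_{\gamma}}^{d}z=0$, and that being killed by every $P_{B_{\gamma}}^{d}$ implies being killed by $\bigvee_{\gamma}P_{B_{\gamma}}^{d}$, are standard for elements of $X$ but for elements of $X^{s}$ they need exactly the machinery you have not supplied.

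The paper closes all of this with one localization device that your proposal lacks: membership of $z\in X_{+}^{s}$ in $B^{s}$ is tested by meeting with finite elements, i.e. $z\in B^{s}$ precisely when $y\wedge z\in B$ for every $y\in X_{+}$. With this criterion the whole lemma is short. For the supremum: each $y\wedge x_{\alpha}\in B_{\alpha}\subseteq\sup_{\alpha}B_{\alpha}$, and $y\wedge\sup_{\alpha}x_{\alpha}=\sup_{\alpha}\left(y\wedge x_{\alpha}\right)$ by the distributive law in $X^{s}$; the right-hand side is a supremum, bounded above by $y$, of elements of the band $\sup_{\alpha}B_{\alpha}$ of the Dedekind complete space $X$, hence belongs to that band, and since $y\in X_{+}$ was arbitrary we get $\sup x_{\alpha}\in\left(\sup B_{\\alpha}\right)^{s}$. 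For the infimum the identity $y\wedge\inf_{\alpha}x_{\alpha}=\inf_{\alpha}\left(y\wedge x_{\alpha}\right)$ is automatic (no distributivity needed), and $\inf_{\alpha}\left(y\wedge x_{\alpha}\right)\leq y\wedge x_{\gamma}\in B_{\gamma}$ puts it in $\bigcap_{\gamma}B_{\gamma}$, whence $\inf x_{\alpha}\in\left(\inf B_{\alpha}\right)^{s}$. Note that this same device is what you would need anyway to prove your two structural claims, so your Boolean-projection route for the infimum is salvageable, but only after importing the tool that already makes the direct argument a three-line proof.
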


\begin{proof}
The statements are obvious if $x_{\alpha}\in X_{+}^{u}$ for every $\alpha\in
A.$ Now let $y$ be fixed in $X_{+}$ and observe that $y\wedge x_{\alpha}\in
B_{\alpha}$ for all $\alpha.$ So $y\wedge\sup x_{\alpha}=\sup\left(  y\wedge
x_{\alpha}\right)  \in\left(  \sup B_{\alpha}\right)  ^{s}.$ As this happens
for each $y\in X_{+}$ we get $\sup x_{\alpha}\in\left(  \sup B_{\alpha
}\right)  ^{s}.$ The proof of the other results is similar.
\end{proof}

\begin{remark}
\label{YY2-j}If $\left\{  x_{\alpha}:\alpha\in A\right\}  $ is a subset of
$X_{+}^{s}$ and $y\in X_{+}^{s}$ then $\sup\limits_{\alpha\in A}yx_{\alpha
}=y\sup\limits_{\alpha\in A}x_{\alpha}$ holds in $X_{+}^{s}.$ This follows
from \cite[Lemma 24]{L-900} when $A$ is finite and then holds for arbitrary
subsets using \cite[Lemma 23]{L-900}. It should be noted that a similar
formula for infimum fails in general (see Lemma \ref{YY2-E} below).
\end{remark}

\section{More about finite and infinite parts}

We develop in this section some material concerning the space $X^{s},$ the
sup-completion of $X,$ that are needed to prove our results in Section
\ref{MM}. These results can be interesting in their own right.

Let $X$ be a Dedekind complete Riesz space with weak order unit $e.$ It was
shown in \cite{L-900} that every element $y\in X_{+}^{s}$ has a decomposition:%

\[
y=y^{f}+y^{\infty}\in X^{s},
\]
where $y^{\infty}$ is the largest element in $B^{s}$ for some band $B$ in $X$
and $y^{f}\in B^{d}.$ It is easy to see that $x^{\infty}\leq y^{\infty}$
whenever $x\leq y$ in $X_{+}^{s},$ but it is not the case for the finite parts
in general. Consider for example $x=(1,1)\leq y=(1,\infty)$ in $\left(
\mathbb{R}
^{2}\right)  ^{s}.$

We would like to note this useful point for further reference.

\begin{remark}
\label{YY2-v}Elements of $X_{+}^{s}$ of the form $x^{\infty}$ are
characterized by the following property:%
\[
0<a\leq x^{\infty}\Longrightarrow na\leq x^{\infty}\text{ for all }%
n\in\mathbb{N}.
\]
Additionally, it is noteworthy to observe that if $P=P_{B}$ is a band
projection such that $Px=\infty_{B}$ and $P^{d}x\in X^{u}$ then $Px=x^{\infty
}$ and $P^{d}x=x^{f}.$
\end{remark}

\begin{lemma}
\label{YY2-H}Let $X$ be a Dedekind complete Riesz space and $x,y\in X_{+}%
^{s}.$ Then the following statements hold.

\begin{enumerate}
\item[(i)] $\left(  x+y\right)  ^{\infty}=x^{\infty}+y^{\infty}$ \ and
\ $\left(  x+y\right)  ^{f}\leq x^{f}+y^{f}$ with equality if $x$ and $y$ are disjoint.

\item[(ii)] $\left(  x\vee y\right)  ^{\infty}=x^{\infty}\vee y^{\infty
}=x^{\infty}+y^{\infty}$ and $\left(  x\vee y\right)  ^{f}=P_{y^{\infty}}%
^{d}x^{f}\vee P_{x^{\infty}}^{d}y^{f}\leq x^{f}\vee y^{f}.$

\item[(iii)] $\left(  x\wedge y\right)  ^{\infty}=x^{\infty}\wedge y^{\infty}$
and $\left(  x\wedge y\right)  ^{f}=x^{f}\wedge y^{f}+x^{f}\wedge y^{\infty
}+x^{\infty}\wedge y^{f}.$

\item[(iv)] $\left(  x.y\right)  ^{\infty}=x^{f}.y^{\infty}+x^{\infty}%
.y^{f}+x^{\infty}.y^{\infty}$ and $\left(  xy\right)  ^{f}=x^{f}y^{f}.$ In
particular, if $x\in X_{+}^{u}$ then $\left(  xy\right)  ^{f}=xy^{f}$ and
$\left(  xy\right)  ^{\infty}=xy^{\infty}.$
\end{enumerate}
\end{lemma}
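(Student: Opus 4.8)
The plan is to reduce every claim to the uniqueness statement recorded in Remark~\ref{YY2-v}: to identify the finite and infinite parts of an element $z\in X_{+}^{s}$ it suffices to exhibit a band projection $P=P_{B}$ with $Pz=\infty_{B}$ and $P^{d}z\in X^{u}$, for then $z^{\infty}=Pz$ and $z^{f}=P^{d}z$. Throughout I write $B_{1},B_{2}$ for the bands on which $x^{\infty}=\infty_{B_{1}}$ and $y^{\infty}=\infty_{B_{2}}$ live, and $Q_{1}=P_{x^{\infty}}$, $Q_{2}=P_{y^{\infty}}$ for the associated band projections. Before treating the four items I would first record the elementary identities $\infty_{B_{1}}+\infty_{B_{2}}=\infty_{B_{1}}\vee\infty_{B_{2}}=\infty_{B_{1}\vee B_{2}}$ and $\infty_{B_{1}}\wedge\infty_{B_{2}}=\infty_{B_{1}\cap B_{2}}$, obtained by splitting $B_{1}\vee B_{2}$ into the three disjoint bands $B_{1}\cap B_{2}$, $B_{1}\cap B_{2}^{d}$, $B_{1}^{d}\cap B_{2}$ and using $\infty_{C}+\infty_{C}=\infty_{C}$, together with the two facts $u\wedge\infty_{B}=P_{B}u$ and $u\cdot\infty_{B}=\infty_{B_{P_{B}u}}$ valid for $u\in X_{+}^{u}$, the latter following from Remark~\ref{YY2-j} since $\infty_{B}=\sup_{n}nP_{B}e$.

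For (i) I would write $x+y=(x^{f}+y^{f})+(x^{\infty}+y^{\infty})$ and set $Q=Q_{1}\vee Q_{2}$, the projection onto $B_{1}\vee B_{2}$. Since $Q$ is additive and fixes $x^{\infty},y^{\infty}$, we get $x^{\infty}+y^{\infty}\leq Q(x+y)\leq\infty_{B_{1}\vee B_{2}}=x^{\infty}+y^{\infty}$, while $Q^{d}(x+y)=Q^{d}x^{f}+Q^{d}y^{f}\in X^{u}$ because $Q^{d}x^{\infty}=Q^{d}y^{\infty}=0$. Remark~\ref{YY2-v} then yields $(x+y)^{\infty}=x^{\infty}+y^{\infty}$ and $(x+y)^{f}=Q^{d}x^{f}+Q^{d}y^{f}\leq x^{f}+y^{f}$. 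When $x\perp y$ one has $x\wedge y^{\infty}\leq x\wedge y=0$, so $x\in B_{2}^{d}$ and likewise $y\in B_{1}^{d}$, whence $Q^{d}x^{f}=x^{f}$ and $Q^{d}y^{f}=y^{f}$, giving equality. Part (ii) uses the same projection $Q$: since band projections are Riesz homomorphisms, $Q^{d}(x\vee y)=Q^{d}x^{f}\vee Q^{d}y^{f}\in X^{u}$ and $Q(x\vee y)=\infty_{B_{1}\vee B_{2}}$, so $(x\vee y)^{\infty}=x^{\infty}\vee y^{\infty}=x^{\infty}+y^{\infty}$. Finally $Q^{d}=Q_{1}^{d}Q_{2}^{d}$ acts on $x^{f}\in B_{1}^{d}$ (so $Q_{1}^{d}x^{f}=x^{f}$) as $Q_{2}^{d}=P_{y^{\infty}}^{d}$, and symmetrically on $y^{f}$, which rewrites $(x\vee y)^{f}=P_{y^{\infty}}^{d}x^{f}\vee P_{x^{\infty}}^{d}y^{f}$.

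For (iii) I would decompose $X$ into the four mutually disjoint bands $C_{11}=B_{1}\cap B_{2}$, $C_{10}=B_{1}\cap B_{2}^{d}$, $C_{01}=B_{1}^{d}\cap B_{2}$, $C_{00}=B_{1}^{d}\cap B_{2}^{d}$ cut out by $Q_{1},Q_{2}$, and evaluate $x\wedge y$ band by band, using that each band projection is a Riesz homomorphism: the component is $\infty$ on $C_{11}$ (both factors infinite), equals $y^{f}$ on $C_{10}$ (where $x=\infty$), equals $x^{f}$ on $C_{01}$, and equals $x^{f}\wedge y^{f}$ on $C_{00}$. The $C_{11}$-component is the infinite part $\infty_{C_{11}}=x^{\infty}\wedge y^{\infty}$, and the remaining three components give a finite element; invoking Remark~\ref{YY2-v} and then rewriting the components through $u\wedge\infty_{B}=P_{B}u$ identifies them as $x^{\infty}\wedge y^{f}=Q_{1}y^{f}$, $x^{f}\wedge y^{\infty}=Q_{2}x^{f}$ and $x^{f}\wedge y^{f}$ (the latter automatically living in $C_{00}$ because $x^{f}\in B_{1}^{d}$ and $y^{f}\in B_{2}^{d}$), which is exactly the claimed sum.

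Part (iv) I expect to be the main obstacle, since it is where products of finite and infinite elements must be handled with care. Using distributivity of multiplication over addition in $X_{+}^{s}$ I would expand $xy=x^{f}y^{f}+x^{f}y^{\infty}+x^{\infty}y^{f}+x^{\infty}y^{\infty}$. The last three terms are infinite elements: by the identity $u\cdot\infty_{B}=\infty_{B_{P_{B}u}}$ one has $x^{f}y^{\infty}=\infty_{B_{P_{B_{2}}x^{f}}}\subseteq B_{2}$, $x^{\infty}y^{f}=\infty_{B_{P_{B_{1}}y^{f}}}\subseteq B_{1}$ and $x^{\infty}y^{\infty}=\infty_{B_{1}\cap B_{2}}$, so their sum is $\infty_{B}$ for a band $B\subseteq B_{1}\vee B_{2}$. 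On the other hand $x^{f}y^{f}\in B_{x^{f}}\cap B_{y^{f}}\subseteq B_{1}^{d}\cap B_{2}^{d}=(B_{1}\vee B_{2})^{d}$ by the $f$-algebra property, so $x^{f}y^{f}$ is finite and disjoint from the infinite term. Remark~\ref{YY2-v} then gives $(xy)^{f}=x^{f}y^{f}$ and $(xy)^{\infty}=x^{f}y^{\infty}+x^{\infty}y^{f}+x^{\infty}y^{\infty}$; the special case $x\in X_{+}^{u}$ is immediate on setting $x^{\infty}=0$, $x^{f}=x$. The genuinely delicate points here are justifying the full distributive expansion in the cone $X_{+}^{s}$ and the evaluation $u\cdot\infty_{B}=\infty_{B_{P_{B}u}}$, both of which rest on the sup-distribution law of Remark~\ref{YY2-j}.
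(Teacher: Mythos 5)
Your proof is correct and takes essentially the same route as the paper: your projection $Q=Q_{1}\vee Q_{2}$ onto $B_{1}\vee B_{2}$ is exactly the paper's projection onto the band generated by $x^{\infty}+y^{\infty}$, and your identification device (Remark \ref{YY2-v}) is the same uniqueness argument the paper invokes. The only difference is one of completeness: where the paper cites \cite[Lemma 12]{L-900} for the four-fold decomposition in (iii) and dismisses (iv) as ``similar,'' you derive both in full from the band decomposition and the product identities $u\cdot\infty_{B}=\infty_{B_{P_{B}u}}$ and distributivity, which is harmless and makes the argument more self-contained.
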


\begin{proof}
Let $B$ be the band generated by $x^{\infty}+y^{\infty},$ so that $\infty
_{B}=x^{\infty}+y^{\infty}=x^{\infty}\vee y^{\infty}$, and let $P$ be the
corresponding band projection.

(i) Clearly, $P^{d}\left(  x+y\right)  =P^{d}\left(  x^{f}+y^{f}\right)  \in
X^{u},$ and then%
\[
P\left(  x+y\right)  \geq x^{\infty}+y^{\infty}=\infty_{B}.
\]
So $P\left(  x+y\right)  =x^{\infty}+y^{\infty}=\infty_{B}$ and then
$x^{\infty}+y^{\infty}=\infty_{B}=\left(  x+y\right)  ^{\infty}$ and%
\[
\left(  x+y\right)  ^{f}=P^{d}\left(  x^{f}+y^{f}\right)  =P_{y^{\infty}}%
^{d}x^{f}+P_{x^{\infty}}^{d}y^{f}\leq x^{f}+y^{f}.
\]

(ii) Again as $\infty_{B}\geq P\left(  x\vee y\right)  \geq x^{\infty}\vee
y^{\infty}=\infty_{B}$ we get
\[
\infty_{B}=P\left(  x\vee y\right)  =x^{\infty}\vee y^{\infty}.
\]
On the other hand%
\[
P^{d}\left(  x\vee y\right)  =P^{d}x\vee P^{d}y=P^{d}x^{f}\vee P^{d}%
y^{f}=P_{y^{\infty}}^{d}x^{f}\vee P_{x^{\infty}}^{d}y^{f}\in X^{u}.
\]
\newline This shows that%
\[
\infty_{B}=\left(  x+y\right)  ^{\infty}=\left(  x\vee y\right)  ^{\infty
},\text{ and }\left(  x\vee y\right)  ^{f}=P_{y^{\infty}}^{d}x^{f}\vee
P_{x^{\infty}}^{d}y^{f}.
\]
If $x\perp y$ then $x^{f}+y^{f}\perp x^{\infty}+y^{\infty}$ and then%
\[
P^{d}\left(  x^{f}+y^{f}\right)  =x^{f}+y^{f}.
\]

(iii) It follows from \cite[Lemma 12]{L-900} that%
\[
x\wedge y=x^{\infty}\wedge y^{\infty}+x^{f}\wedge y^{\infty}+x^{\infty}\wedge
y^{f}+x^{f}\wedge y^{f}.
\]
Considering $x^{\infty}\wedge y^{\infty}$ is infinite (unless zero) and
$x^{f}\wedge y^{\infty}+x^{\infty}\wedge y^{f}+x^{f}\wedge y^{f}$ is finite,
mutually disjoint, they are likely the infinite and finite parts of $x\wedge
y.$

(iv) The proof is similar.
\end{proof}

\begin{remark}
\label{YY2-o}As mentioned earlier the map $x\longmapsto x^{\infty}$ is
increasing on $X_{+}^{s},$ whereas the map $x\longmapsto x^{f}$ is not.
However, there is an important case where the implication: $x\leq
y\Longrightarrow x^{f}\leq y^{f}$ holds true. This occurs when the difference
is finite: If $y=x+a$ with $a\in X_{+}^{u}$ and $x\leq y,$ then $x^{f}\leq
y^{f}.$ Indeed we have%
\[
x=y^{\infty}+y^{f}-a=y^{\infty}+y^{f}-P_{y^{\infty}}a-P_{y^{\infty}}%
^{d}a=y^{\infty}+y^{f}-P_{y^{\infty}}^{d}a.
\]
But as $y^{f}-P_{y^{\infty}}^{d}a\in B_{y^{\infty}}^{d},$ we deduce from the
uniqueness of the decomposition \cite[Theorem 15]{L-900} that $x^{f}%
=y^{f}-P_{y^{\infty}}^{d}a\leq y^{f}.$
\end{remark}

\begin{remark}
\label{YY2-l}(i) It is well known that for every $x,y\in X_{+}$ we have
$B_{xy}=B_{x\wedge y}=B_{x}\cap B_{y}.$ This formula is still valid when
$x,y\in X_{+}^{s}.$ This can be shown by taking two nets $\left(  x_{\alpha
}\right)  $ and $\left(  y_{\alpha}\right)  $ in $X$ such that $x_{\alpha
}\uparrow x$ and $y_{\alpha}\uparrow y.$\newline(ii) It was shown in
\cite[Proposition 25]{L-900} that if $x\in X_{+}^{s}$ and $B$ is a projection
band then $\infty_{B}.x=\infty_{P_{B}x}=\infty_{B\cap B_{x}}.$ In particular,
if $B\subseteq B_{x}$ we have $\infty_{B}.x=\infty_{B}.$
\end{remark}

\begin{proposition}
\label{YY2-k}Let\textbf{\ }$\left(  x_{\alpha}\right)  _{\alpha\in A}$ be a
net in $X^{s}$ and let $y\in X^{s}.$ Then the following statements hold.

\begin{enumerate}
\item[(i)] $\sup(y+x_{\alpha})=y+\sup x_{\alpha}.$

\item[(ii)] If\textbf{\ }$(x_{\alpha})$ is order bounded from below in $X^{s}%
$, then $\inf\limits_{\alpha}(y+x_{\alpha})=y+\inf\limits_{\alpha}x_{\alpha}.$

\item[(iii)] $\limsup\limits_{\alpha}(y+x_{\alpha})=y+\limsup\limits_{\alpha
}x_{\alpha}.$

\item[(iv)] If\textbf{\ }$(x_{\alpha})$ is order bounded from below in $X^{s}%
$, then
\[
\liminf\limits_{\alpha}(y+x_{\alpha})=y+\liminf\limits_{\alpha}x_{\alpha}.
\]

\end{enumerate}
\end{proposition}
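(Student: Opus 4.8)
The plan is to establish (i) and (ii) first and then obtain (iii) and (iv) by iterating them through the definitions $\limsup_{\alpha}z_{\alpha}=\inf_{\beta}\sup_{\alpha\geq\beta}z_{\alpha}$ and $\liminf_{\alpha}z_{\alpha}=\sup_{\beta}\inf_{\alpha\geq\beta}z_{\alpha}$. For (i) and (ii) the easy halves come for free from monotonicity of the translation $t\mapsto y+t$: since $x_{\alpha}\leq\sup_{\gamma}x_{\gamma}$ we get $\sup_{\alpha}(y+x_{\alpha})\leq y+\sup_{\alpha}x_{\alpha}$, and dually $\inf_{\alpha}(y+x_{\alpha})\geq y+\inf_{\alpha}x_{\alpha}$ (the infimum existing by the bounded-below hypothesis). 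The content is the reverse inequalities, and the key idea is to isolate the infinite part of $y$.

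First I would reduce $y$ to a normal form. Writing $y=y^{+}-y^{-}$ with $y^{-}\in X_{+}^{u}$ finite, and decomposing $y^{+}=(y^{+})^{f}+\infty_{B}$ as in \cite{L-900} (where $B$ is the band with $(y^{+})^{\infty}=\infty_{B}$), I obtain $y=w+\infty_{B}$ with $w:=(y^{+})^{f}-y^{-}\in X^{u}$ a finite element and $B$ a projection band. Since addition in $X^{s}$ is commutative and associative, it then suffices to pull the finite $w$ and the infinite $\infty_{B}$ through the lattice operation separately, so that $\sup_{\alpha}(y+x_{\alpha})=w+\sup_{\alpha}(\infty_{B}+x_{\alpha})$, and likewise for the infimum.

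Pulling through $w$ is the routine part: as $-w\in X^{s}$ (its negative part $w^{+}$ is finite) and $w+(-w)=0$, the map $t\mapsto w+t$ is an order isomorphism of $X^{s}$ with inverse $t\mapsto -w+t$, hence it preserves every supremum and every existing infimum. The substantive step is $\infty_{B}$. Here I would use the absorption $\infty_{B}+b=\infty_{B}$ for $b\in B^{s}$ (valid since $\infty_{B}$ is the top of $B^{s}$ and negative parts are finite) to write $\infty_{B}+x_{\alpha}=\infty_{B}+P_{B}^{d}x_{\alpha}$, a direct sum whose $B^{d}$-component is $P_{B}^{d}x_{\alpha}$. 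Since a band projection is multiplication by a component of $e$, Remark \ref{YY2-j} gives $\sup_{\alpha}P_{B}^{d}x_{\alpha}=P_{B}^{d}\sup_{\alpha}x_{\alpha}$ on positive parts, and Lemma \ref{YY2-A}(i) applied to the constant net $(\infty_{B})$ and the disjoint net $(P_{B}^{d}x_{\alpha})$ yields $\sup_{\alpha}(\infty_{B}+P_{B}^{d}x_{\alpha})=\infty_{B}+\sup_{\alpha}P_{B}^{d}x_{\alpha}=\infty_{B}+\sup_{\alpha}x_{\alpha}$. The analogous computation for the infimum is exactly where the bounded-below hypothesis of (ii) enters, guaranteeing that $\inf_{\alpha}x_{\alpha}$ and $\inf_{\alpha}P_{B}^{d}x_{\alpha}$ exist. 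I expect the main obstacle to be this $\infty_{B}$ case, specifically justifying that the supremum and infimum are taken componentwise across the band decomposition when $P_{B}^{d}x_{\alpha}$ need not lie in $X_{+}^{s}$, so that Lemma \ref{YY2-A} (stated for positive nets) applies only after the finite negative parts are accounted for.

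Finally, for (iii) I would write $\limsup_{\alpha}(y+x_{\alpha})=\inf_{\beta}\sup_{\alpha\geq\beta}(y+x_{\alpha})=\inf_{\beta}\bigl(y+\sup_{\alpha\geq\beta}x_{\alpha}\bigr)$ by (i), and then apply (ii) to the decreasing net $\bigl(\sup_{\alpha\geq\beta}x_{\alpha}\bigr)_{\beta}$; this net is bounded below precisely when $\limsup_{\alpha}x_{\alpha}$ exists, which is the standing assumption making the identity meaningful, and then $\inf_{\beta}\bigl(y+\sup_{\alpha\geq\beta}x_{\alpha}\bigr)=y+\limsup_{\alpha}x_{\alpha}$. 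For (iv), the dual reading $\liminf_{\alpha}(y+x_{\alpha})=\sup_{\beta}\inf_{\alpha\geq\beta}(y+x_{\alpha})=\sup_{\beta}\bigl(y+\inf_{\alpha\geq\beta}x_{\alpha}\bigr)=y+\liminf_{\alpha}x_{\alpha}$ follows by applying (ii) on each inner infimum (legitimate since $(x_{\alpha})$ is bounded below, so every $\inf_{\alpha\geq\beta}x_{\alpha}$ exists) and then (i) on the outer supremum.
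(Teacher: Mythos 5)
Your overall architecture coincides with the paper's proof of (ii): the paper also first settles the case $y\in X^{u}$ by the same translation-inverse trick (apply the trivial inequality to $-y$), then treats $y=\infty_{B}$ using the band projection $P=P_{B}$, and finally combines the two cases through the decomposition $y=y^{f}+y^{\infty}$ --- your normal form $y=w+\infty_{B}$ is exactly that decomposition --- and it deduces (iii),(iv) from (i),(ii) just as you do. The one structural difference is (i): the paper obtains it for free by citing \cite[Property (P8)]{L-900}, whereas you re-derive it through the band decomposition, and that is precisely where your argument has a genuine gap.

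Concretely, in the $\infty_{B}$ step you invoke Lemma \ref{YY2-A}(i) for the constant net $(\infty_{B})$ together with $(P_{B}^{d}x_{\alpha})$, and Remark \ref{YY2-j} for $\sup_{\alpha}P_{B}^{d}x_{\alpha}=P_{B}^{d}\sup_{\alpha}x_{\alpha}$; both statements are available only for positive nets/elements, and you flag this obstacle but never carry out the promised ``accounting of the finite negative parts.'' In (ii) the repair is the one the paper actually performs: since $(x_{\alpha})$ has a lower bound $m\in X^{s}$, whose negative part lies in $X_{+}$ (any element of $X^{s}$ dominated by an element of $X$ belongs to $X$), one shifts by elements of $X_{+}$ --- legitimate by your finite-translation isomorphism --- to assume $(x_{\alpha})$ and $y$ positive, after which Lemma \ref{YY2-A} applies; you use the bounded-below hypothesis only to guarantee existence of infima and omit this reduction, though your tools suffice for it. In (i), however, there is no bounded-below hypothesis, so no such shift exists: the negative parts $x_{\alpha}^{-}$, though individually finite, need not be uniformly bounded, and your cited lemmas do not yield $\sup_{\alpha}\left(\infty_{B}+P_{B}^{d}x_{\alpha}\right)=\infty_{B}+\sup_{\alpha}P_{B}^{d}x_{\alpha}$ for non-positive nets. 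Closing (i) along your route would require extending the band-projection calculus (monotonicity, the splitting $s=Ps+P^{d}s$, compatibility with arbitrary suprema) from $X_{+}^{s}$ to all of $X^{s}$ via positive and negative parts --- provable, but not contained in the results you quote --- or, more simply, citing \cite[Property (P8)]{L-900} as the paper does, which renders the entire decomposition unnecessary for the supremum statement.
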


\begin{proof}
(i) This is a particular case of \cite[Property (P8)]{L-900}.

(ii) The inequality $y+\inf\limits_{\alpha}x_{\alpha}\leq\inf\limits_{\alpha
}\left(  y+x_{a}\right)  $ is obvious. For the converse assume first that
$y\in X^{u}.$ Then by the first inequality%
\[
-y+\inf\limits_{\alpha}\left(  y+x_{\alpha}\right)  \leq\inf\limits_{\alpha
}x_{\alpha},
\]
and so%
\[
\inf\limits_{\alpha}\left(  y+x_{\alpha}\right)  =y+\inf\limits_{\alpha
}x_{\alpha}.
\]
This shows the result for this particular case. Moreover, as $\left(
x_{\alpha}\right)  _{\alpha\in A}$ is order bounded from below we can assume
without loss of generality that $\left(  x_{\alpha}\right)  _{\alpha\in A}$
and $y$ are in the positive cone $X_{+}^{s}.$ We treat now the case
$y=\infty_{B}$ for some band $B.$ Let $P$ denotes the corresponding band
projection. Then from the inequality%
\[
\inf\limits_{\alpha}(x_{\alpha}+\infty_{B})\leq x_{\beta}+\infty_{B}%
,\qquad\beta\in A,
\]
\newline we deduce that%
\[
P^{d}\inf\limits_{\alpha}(x_{\alpha}+\infty_{B})\leq P^{d}x_{\beta}\leq
x_{\beta}.
\]
As this happens for every $\beta$ we get%
\[
P^{d}\inf\limits_{\alpha}(x_{\alpha}+\infty_{B})\leq\inf\limits_{\alpha
}x_{\alpha}.
\]
Now observe that%
\[
\inf\limits_{\alpha}(x_{\alpha}+\infty_{B})=P^{d}\inf\limits_{\alpha
}(x_{\alpha}+\infty_{B})+P\inf\limits_{\alpha}(x_{\alpha}+\infty_{B})\leq
\inf\limits_{\alpha}x_{\alpha}+\infty_{B},
\]
\newline which shows the second inequality. Finally the general case can be
derived by employing the decomposition $y=y^{f}+y^{\infty}$ in the following
way:%
\[
\inf\limits_{\alpha}(y+x_{\alpha})=y^{\infty}+\inf\limits_{\alpha}\left(
y^{f}+x_{\alpha}\right)  =y^{\infty}+y^{f}+\inf\limits_{\alpha}\left(
x_{\alpha}\right)  =y+\inf\limits_{\alpha}x_{\alpha}.
\]

(iii) and (iv) can be deduced easily from (i) and (ii).
\end{proof}

It follows from \cite[Theorem 5]{L-900} that if $\left(  x_{\alpha}\right)  $
and $\left(  y_{\alpha}\right)  $ are two nets in $X_{+}^{s}$ such that
$x_{\alpha}\uparrow x$ and $y_{\alpha}\uparrow y$ in $X^{s}$ then $\left(
x_{\alpha}+y_{\alpha}\right)  _{\alpha}\uparrow x+y$ (apply (i) to the map
$X\times X\longrightarrow X;$ $\left(  x,y\right)  \longmapsto x+y$).

\begin{proposition}
\label{YY2-T}Let $\left(  x_{\alpha}\right)  $ and $\left(  y_{\alpha}\right)
$ be two nets in $X^{s}$ that are bounded from below. Then the following
statements hold.

\begin{enumerate}
\item[(i)] $\liminf x_{\alpha}+\liminf y_{\alpha}\leq\liminf\left(  x_{\alpha
}+y_{\alpha}\right)  \leq\liminf x_{\alpha}+\limsup y_{\alpha}$

\item[(ii)] $\limsup\left(  x_{\alpha}+y_{\alpha}\right)  \leq\limsup
x_{\alpha}+\limsup y_{\alpha}.$

\item[(iii)] If $\lim y_{\alpha}$ exists then $\liminf\left(  x_{\alpha
}+y_{\alpha}\right)  =\liminf x_{\alpha}+\lim y_{\alpha}.$
\end{enumerate}
\end{proposition}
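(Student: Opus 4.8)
The plan is to prove the three statements in the order (ii), (i)(left), (i)(right), (iii), exploiting the monotonicity of the tail suprema and tail infima together with Proposition \ref{YY2-k}, which lets a fixed element of $X^s$ be pulled out of an $\inf$ or a $\sup$. For (ii), set $u_\beta=\sup_{\alpha\ge\beta}x_\alpha$ and $v_\beta=\sup_{\alpha\ge\beta}y_\alpha$, two decreasing nets whose infima are $\limsup x_\alpha$ and $\limsup y_\alpha$. Since $x_\alpha+y_\alpha\le u_\beta+v_\beta$ for every $\alpha\ge\beta$, taking the supremum over such $\alpha$ and then the infimum over $\beta$ gives $\limsup(x_\alpha+y_\alpha)\le\inf_\beta(u_\beta+v_\beta)$. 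It then remains to split $\inf_\beta(u_\beta+v_\beta)=\limsup x_\alpha+\limsup y_\alpha$; this is the content of Lemma \ref{YY2-Q} when the nets are positive, and in general it follows from the same device used for (i)(right) below. The only point to verify is that $(u_\beta)$ and $(v_\beta)$ are bounded from below, which is inherited from the hypothesis on $(x_\alpha),(y_\alpha)$.

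For the left inequality in (i), put $a_\beta=\inf_{\alpha\ge\beta}x_\alpha$ and $b_\beta=\inf_{\alpha\ge\beta}y_\alpha$, two increasing nets with suprema $\liminf x_\alpha$ and $\liminf y_\alpha$. From $a_\beta+b_\beta\le x_\alpha+y_\alpha$ for $\alpha\ge\beta$ one gets $a_\beta+b_\beta\le\inf_{\alpha\ge\beta}(x_\alpha+y_\alpha)$, and taking the supremum over $\beta$, using that the sum of two increasing nets converges to the sum of their suprema (the observation following Proposition \ref{YY2-k}, based on \cite[Theorem 5]{L-900}), yields $\liminf x_\alpha+\liminf y_\alpha\le\liminf(x_\alpha+y_\alpha)$.

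The right inequality in (i) is where I expect the genuine difficulty, because the textbook route would write $x_\alpha=(x_\alpha+y_\alpha)+(-y_\alpha)$ and invoke $\liminf(-y_\alpha)=-\limsup y_\alpha$, and negation is simply not available in the cone $X^s$. Instead I would argue monotonically. Fix $\beta_0$; for every $\alpha\ge\beta\ge\beta_0$ one has $y_\alpha\le v_{\beta_0}$, whence $\inf_{\alpha\ge\beta}(x_\alpha+y_\alpha)\le\inf_{\alpha\ge\beta}(x_\alpha+v_{\beta_0})=v_{\beta_0}+\inf_{\alpha\ge\beta}x_\alpha\le v_{\beta_0}+\liminf x_\alpha$, the middle equality being Proposition \ref{YY2-k}(ii). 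Taking the supremum over the cofinal set $\{\beta:\beta\ge\beta_0\}$ leaves the value unchanged and gives $\liminf(x_\alpha+y_\alpha)\le v_{\beta_0}+\liminf x_\alpha$ for every $\beta_0$; a second application of Proposition \ref{YY2-k}(ii), this time to take the infimum over $\beta_0$, produces $\liminf(x_\alpha+y_\alpha)\le\limsup y_\alpha+\liminf x_\alpha$. The identical ``dominate by the tail supremum, then pull out the constant'' step also carries out the splitting of $\inf_\beta(u_\beta+v_\beta)$ promised in (ii), so that part needs no appeal to positivity either.

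Finally (iii) is immediate from (i): if $\lim y_\alpha=y$ exists then $\liminf y_\alpha=\limsup y_\alpha=y$, and the two inequalities of (i) sandwich $\liminf(x_\alpha+y_\alpha)$ between $\liminf x_\alpha+y$ and $\liminf x_\alpha+y$, forcing equality. The main obstacle throughout is thus the failure of subtraction in $X^s$; it is circumvented by replacing the additive-inverse trick with the monotone tail-net estimates above, where Proposition \ref{YY2-k}(ii) does all the heavy lifting.
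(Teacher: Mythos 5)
Your proof is correct and follows essentially the same route as the paper's: dominate the tail infima by fixing one index, pull the fixed element out of the tail infimum via Proposition \ref{YY2-k}(ii), pass to the limit over the remaining index (your $v_{\beta_0}$ is the paper's $\sup_{\alpha\geq\theta}y_{\alpha}$), and obtain (iii) by sandwiching from (i). The only cosmetic difference is in (ii), where the paper cites Lemma \ref{YY2-Q} to split $\inf_{\beta}\left(u_{\beta}+v_{\beta}\right)$ while you re-derive that splitting inline by the same device -- which in fact quietly sidesteps the point that Lemma \ref{YY2-Q} is stated only for nets in $X_{+}^{s}$, whereas here the nets are merely bounded from below.
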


\begin{proof}
(i) We will make use of Lemma \ref{YY2-k}. We have for all $\beta\geq\theta,$%
\begin{align*}
\inf\limits_{\alpha\geq\beta}x_{\alpha}+\inf\limits_{\alpha\geq\beta}%
y_{\alpha}  &  \leq\inf\limits_{\alpha\geq\beta}(x_{\alpha}+y_{\alpha}%
)\leq\inf\limits_{\alpha\geq\beta}\left(  x_{\alpha}+\sup_{\alpha\geq\theta
}y_{\alpha}\right) \\
&  =\inf_{\alpha\geq\beta}x_{\alpha}+\sup_{\alpha\geq\theta}y_{\alpha}\leq
\lim\inf x_{\alpha}+\sup_{\alpha\geq\theta}y_{\alpha}.
\end{align*}
Taking the supremum over $\beta,$ we obtain%
\[
\liminf(x_{\alpha})+\liminf(y_{\alpha})\leq\liminf(x_{\alpha}+y_{\alpha}%
)\leq\liminf x_{\alpha}+\sup_{\alpha\geq\theta}y_{\alpha}.
\]
Then taking the infimum over $\theta$ and using Proposition \ref{YY2-k}\ we
get the desired inequalities.

(ii) We have for each $\beta,$%
\[
\sup\limits_{\alpha\geq\beta}\left(  x_{\alpha}+y_{\alpha}\right)  \leq
\sup\limits_{\alpha\geq\beta}x_{\alpha}+\sup\limits_{\alpha\geq\beta}%
y_{\alpha}.
\]
Then, taking the infimum over $\beta$ an using Lemma \ref{YY2-Q}, we get the
desired inequality.

(iii) This is an easy consequence of (i).
\end{proof}

\begin{proposition}
\label{YY2-E}Let $\left(  x_{\alpha}\right)  $ be a net in $X_{+}^{s},$ $u\in
X_{+}^{s}$ and $B\in\mathcal{B}\left(  X\right)  .$ Then the following
statements hold.

\begin{enumerate}
\item[(i)] If $u^{\infty}\in B_{\inf_{\alpha}x_{\alpha}}^{s}$\ then%
\[
\inf\limits_{\alpha}ux_{\alpha}=u\inf\limits_{\alpha}x_{\alpha}.
\]
In particular we have:

\begin{enumerate}
\item If $B\subset B_{\inf\limits_{a}x_{\alpha}}$ then\ $\inf\limits_{a}%
(\infty_{B}x_{\alpha})=\infty_{B}.\inf\limits_{a}x_{\alpha}=\infty_{B}.$

\item If $u\in X_{+}^{u}$ then $\inf\limits_{\alpha}ux_{\alpha}=u\inf
\limits_{\alpha}x_{\alpha}.$
\end{enumerate}

\item[(ii)] If $u^{\infty}\subset B_{\limsup x_{\alpha}}^{s}$ then
$\limsup\limits_{\alpha}(ux_{\alpha})=u\limsup\limits_{\alpha}\left(
x_{\alpha}\right)  .$ In particular, if $B\subset B_{\limsup\limits_{\alpha
}\left(  x_{\alpha}\right)  }$\ then $\limsup\limits_{\alpha}(x_{\alpha}%
\infty_{B})=\infty_{B}\limsup\limits_{\alpha}\left(  x_{\alpha}\right)  .$

\item[(iii)] If $u^{\infty}\in B_{\liminf\limits_{\alpha}x_{\alpha}}$
then\textbf{\ }\ $\liminf\limits_{\alpha}(ux_{\alpha})=u\liminf\limits_{\alpha
}x_{\alpha}.$ In particular, if $B\subset B_{\liminf\limits_{\alpha}x_{\alpha
}}$\ then%
\[
\liminf\limits_{\alpha}(\infty_{B}x_{\alpha})=\infty_{B}\liminf\limits_{\alpha
}\left(  x_{\alpha}\right)  =\infty_{B}.
\]

\end{enumerate}
\end{proposition}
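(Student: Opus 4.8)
The plan is to prove (i) first, since (ii) and (iii) reduce to it: both $\limsup$ and $\liminf$ are infima/suprema of monotone nets of the form $\sup_{\alpha\geq\beta}x_\alpha$ or $\inf_{\alpha\geq\beta}x_\alpha$, and multiplication by a fixed $u$ commutes with suprema by Remark \ref{YY2-j}, so the only nontrivial point is the interaction of $u$ with infima, which is exactly what (i) addresses. I would write $u=u^f+u^\infty$ and handle the two summands separately using the distributivity of multiplication over addition in the $f$-algebra $X^u$ (extended to $X^s$). For the finite part, part (i)(b) is the clean case: if $u\in X_+^u$ then $u\,(\cdot)$ is a lattice homomorphism on the band $B_u$ and commutes with order limits, so $\inf_\alpha u^f x_\alpha=u^f\inf_\alpha x_\alpha$ follows from the order-continuity of multiplication by a finite positive element, which I expect to be citable from \cite[Lemma 24]{L-900} or provable directly via the net characterization $x_\alpha\downarrow$ on the relevant band.

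The genuinely new content is the infinite part, and the hypothesis $u^\infty\in B_{\inf_\alpha x_\alpha}^s$ is precisely the assumption that makes it work, so the heart of the argument is case (i)(a): showing $\inf_\alpha(\infty_B x_\alpha)=\infty_B$ whenever $B\subseteq B_{\inf_\alpha x_\alpha}$. The inequality $\inf_\alpha(\infty_B x_\alpha)\leq \infty_B$ is trivial since $\infty_B$ is the top of $B^s$. For the reverse, I would set $z=\inf_\alpha x_\alpha$ and note that by Remark \ref{YY2-l}(ii) we have $\infty_B\, x_\alpha=\infty_{B\cap B_{x_\alpha}}$, and since $B\subseteq B_z\subseteq B_{x_\alpha}$ for every $\alpha$ (because $z\leq x_\alpha$ forces $B_z\subseteq B_{x_\alpha}$), each term equals $\infty_B$. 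Hence the net $(\infty_B x_\alpha)$ is constantly $\infty_B$ and its infimum is $\infty_B=\infty_B\cdot z$, the last equality again by Remark \ref{YY2-l}(ii) since $B\subseteq B_z$.

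Combining the two parts is where I anticipate the main obstacle: I want $\inf_\alpha ux_\alpha=\inf_\alpha(u^f x_\alpha+u^\infty x_\alpha)$ to split as $\inf_\alpha u^f x_\alpha+\inf_\alpha u^\infty x_\alpha$, which is \emph{not} automatic because Lemma \ref{YY2-A} requires disjointness of the summand nets and Lemma \ref{YY2-Q} requires both nets decreasing. The clean route is to observe that $u^\infty x_\alpha$ lives in $B_{u^\infty}^s$ while, after applying the band projection $P_{u^\infty}^d$, the finite contribution lives in the complementary band; more precisely I would apply $P_{u^\infty}$ and $P_{u^\infty}^d$ to the whole equation. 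On $R(P_{u^\infty})$ the hypothesis $u^\infty\in B_z^s$ together with (i)(a) collapses everything to $\infty$ on the relevant band, while on $R(P_{u^\infty}^d)$ only the finite part survives and (i)(b) applies. Assembling these via the band-projection decomposition, exactly as in the proof of Proposition \ref{YY2-k}(ii), yields the general formula, and then (ii) and (iii) follow by applying (i) to the monotone nets defining $\limsup$ and $\liminf$ and invoking Remark \ref{YY2-j} for the suprema.
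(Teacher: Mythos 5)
Your handling of (i) and (ii) is essentially the paper's. Part (a) is the same observation that $(\infty_B x_\alpha)$ is a constant net; your assembly of the general case by applying $P_{u^\infty}$ and $P_{u^\infty}^d$ is equivalent to the paper's step, and in fact the disjointness you fear is missing is automatic: $u^f x_\alpha\in B_{u^f}^s$ and $u^\infty x_\beta\in B_{u^\infty}^s$ for all indices $\alpha,\beta$, so the two summand nets are disjoint in the sense of Lemma \ref{YY2-A} and that lemma applies directly (this is exactly what the paper does). The reduction of (ii) to (i) via Remark \ref{YY2-j} is also the paper's argument. One caveat on (i)(b): order continuity of multiplication by a finite $u$ against nets in $X_+^s$ is precisely the statement to be proved, and \cite[Lemma 24]{L-900} only concerns \emph{finite} suprema, so it cannot be cited here; the paper instead argues with the partial inverse (if $z\in X_+^u$ is a lower bound of $(ux_\alpha)$ then $z\in B_{u}=B_{u^{\ast}}$, so $u^{\ast}z\leq u^{\ast}ux_\alpha\leq x_\alpha$ for all $\alpha$, whence $z=uu^{\ast}z\leq u\inf_\alpha x_\alpha$). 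That is a fixable thinness rather than a wrong turn.

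The genuine gap is in (iii). You propose to apply (i) to each tail net $(x_\alpha)_{\alpha\geq\beta}$ and then take suprema over $\beta$. But (i) applied to that tail requires $u^\infty\in B_{\inf_{\alpha\geq\beta}x_\alpha}^s$, while the hypothesis of (iii) only gives $u^\infty\in B_{\liminf_\alpha x_\alpha}^s$, and the inclusion runs the wrong way: since $\inf_{\alpha\geq\beta}x_\alpha\leq\liminf_\alpha x_\alpha$, one has $B_{\inf_{\alpha\geq\beta}x_\alpha}\subseteq B_{\liminf_\alpha x_\alpha}$; indeed $B_{\liminf_\alpha x_\alpha}$ is the supremum of the increasing family of bands $B_{\inf_{\alpha\geq\beta}x_\alpha}$, and $u^\infty$ need not lie in any single member of that family. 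This is not a removable technicality: the identity you need, $\inf_{\alpha\geq\beta}(ux_\alpha)=u\inf_{\alpha\geq\beta}x_\alpha$, can fail at \emph{every} tail even though the conclusion of (iii) holds. For instance, take $X=\mathbb{R}^{\mathbb{N}}$, $u=\infty$, and the net indexed by the directed set of finitely supported sequences $a$ of nonnegative integers (coordinatewise order) given by $x_a(k)=1$ if $a_k\geq1$ and $x_a(k)=(1+\sum_j a_j)^{-1}$ if $a_k=0$: then $\liminf_a x_a$ is the constant-one sequence, a weak order unit, so the hypothesis of (iii) holds; yet for every $\beta$ there is a coordinate $k$ (any with $\beta_k=0$) where $\inf_{a\geq\beta}x_a(k)=0$ while all $x_a(k)>0$, so at that coordinate the two sides of your tail identity are $\infty$ and $0$, exactly the phenomenon of Remark \ref{YY2-W}. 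The paper circumvents this by approximating from below: for $u=\infty_B$ it sets $u_\beta=\infty_B\inf_{\alpha\geq\beta}x_\alpha$, so that $B_{u_\beta}\subseteq B_{\inf_{\alpha\geq\gamma}x_\alpha}$ for every $\gamma\geq\beta$ and (i) \emph{is} applicable with multiplier $u_\beta$, giving $\liminf_\alpha(u_\beta x_\alpha)=u_\beta\leq\liminf_\alpha(\infty_B x_\alpha)$; the hypothesis $B\subseteq B_{\liminf x_\alpha}$ is used only to guarantee $u_\beta\uparrow\infty_B$, and taking the supremum over $\beta$ yields the nontrivial inequality $\infty_B\leq\liminf_\alpha(\infty_B x_\alpha)$. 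Your proposal is missing this approximation idea, and without it (iii) does not follow from (i).
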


\begin{proof}
(i)(a) Assume first that $B\subseteq B_{\inf\limits_{\alpha}x_{\alpha}}.$ Then
$\infty_{B}\inf\limits_{\alpha}x_{\alpha}=\infty_{B}=\infty_{B}x_{\beta}$ for
each $\beta\in A.$ Thus the formula%
\[
\inf\limits_{\alpha}(x_{\alpha}\infty_{B})=\infty_{B}\inf\limits_{\alpha
}x_{\alpha}=\infty_{B}%
\]
holds.

(b) Assume now that $u\in X_{+}^{u}.$ The inequality $\inf\limits_{\alpha
}\left(  ux_{\alpha}\right)  \geq u\inf\limits_{\alpha}x_{\alpha}$ is obvious.
For the converse let $z\in X_{+}^{u}$ such that $z\leq\inf_{\alpha}%
(ux_{\alpha})$. Then $z\in B_{ux_{\alpha}}\subset B_{u}=B_{u^{\ast}}.$ Hence
$u^{\ast}z\leq u^{\ast}ux_{\alpha}\leq x_{\alpha}$ for every $\alpha.$ It
follows that $u^{\ast}z\leq\inf x_{\alpha}$ and then $z=u.u^{\ast}z\leq
u\inf_{\alpha}x_{\alpha}$. From this we deduce the inequality $\inf_{\alpha
}(ux_{\alpha})\leq u\inf_{\alpha}x_{\alpha}.$

(c) The general case. Assume now that $u\in X_{+}^{s}.$ Since $(u^{f}%
x_{\alpha})_{\alpha}\perp(u^{\infty}x_{\alpha})_{\alpha}$\ it follows from
Lemma \ref{YY2-A} and cases (a) and (b) that%
\begin{align*}
\inf\limits_{\alpha}(ux_{\alpha})  &  =\inf\limits_{\alpha}(u^{f}x_{\alpha
})+\inf\limits_{\alpha}(u^{\infty}x_{\alpha})\\
&  =u^{f}\inf\limits_{\alpha}(x_{\alpha})+u^{\infty}\inf\limits_{\alpha
}(x_{\alpha})=u\inf\limits_{\alpha}(x_{\alpha}),
\end{align*}
\newline as required.

(ii) We have for each $\beta\in A,$%
\[
\sup_{\alpha\geq\beta}(x_{\alpha}\infty_{B})=\infty_{B}.\sup_{\alpha\geq\beta
}(x_{\alpha})=\infty_{B}.
\]
Since $B\subset B_{\limsup x_{\alpha}}$ it follows by (i) that%
\begin{align*}
\infty_{B}\limsup\limits_{\alpha}(x_{\alpha})  &  =\infty_{B}\inf
\limits_{\beta}\left(  \sup\limits_{\alpha\geq\beta}(x_{\alpha})\right)
=\inf\limits_{\beta}\left(  \infty_{B}.\sup\limits_{\alpha\geq\beta}%
(x_{\alpha})\right) \\
&  =\limsup\limits_{\alpha}(x_{\alpha}.\infty_{B})=\infty_{B}.
\end{align*}
The general case can be deduced in a similar way as in (i).

(iii) Assume first that $u=\infty_{B}$ for some $B\subseteq B_{\liminf
\limits_{\alpha}x_{\alpha}}^{s}.$ One inequality is obvious as $\liminf\left(
\infty_{B}x_{\alpha}\right)  \leq\infty_{B}=\infty_{B}\liminf x_{\alpha}.$ To
prove the converse let us put $u_{\beta}=u\inf\limits_{\alpha\geq\beta
}x_{\alpha}$ for $\beta\in A.$ Then $u_{\beta}\uparrow\infty_{B}.$ So for
every $\gamma\geq\beta,$ $u_{\beta}\in B_{\inf\limits_{\alpha\geq\gamma
}x_{\alpha}}.$ It follows in view of (i) that%
\[
\inf\limits_{\alpha\geq\gamma}u_{\beta}x_{\alpha}=u_{\beta}\inf\limits_{\alpha
\geq\gamma}x_{\alpha}=u_{\beta}.
\]
By taking the supremum over $\gamma$ we get%
\begin{align*}
u_{\beta}  &  =\liminf\limits_{\alpha}u_{\beta}x_{\alpha}=\sup\limits_{\gamma
}\inf\limits_{\alpha\geq\gamma}u_{\beta}x_{\alpha}=\sup\limits_{\beta}%
u_{\beta}\inf\limits_{\alpha\geq\beta}x_{\alpha}\\
&  =u_{\beta}\sup\limits_{\beta}\inf\limits_{\alpha\geq\beta}x_{\alpha
}=u_{\beta}\liminf\limits_{\alpha}x_{\alpha}.
\end{align*}
\newline Taking the supremum over $\beta$ we get%
\[
\infty_{B}=\infty_{B}\liminf\limits_{\alpha}x_{\alpha}=\sup\limits_{\beta
}\liminf\limits_{\alpha}u_{\beta}x_{\alpha}\leq\liminf\limits_{\alpha}%
\infty_{B}x_{\alpha}.
\]
This proves (iii) in that special case. The general case can be deduced as in (i).
\end{proof}

\begin{remark}
\label{YY2-W}In Proposition \ref{YY2-E}, the condition $B\subseteq
B_{\inf_{\alpha}x_{\alpha}}$ can not be dropped as the following example can
show. If $X=%
\mathbb{R}
,$ $u=\infty$ and $x_{n}=n^{-1},$ $n\geq1,$ then $\infty=\inf ax_{n}\neq u\inf
x_{n}=0.$ But it is useful to note the following inequality $\inf
\limits_{\alpha}\left(  ux_{\alpha}\right)  \leq u^{\infty}+u^{f}\inf
x_{\alpha}.$
\end{remark}

\begin{lemma}
\label{X4}Let $\left(  x_{\alpha}\right)  _{\alpha\in A},$ $\left(  y_{\alpha
}\right)  _{\alpha\in A}$ be two nets in $X_{+}^{s}.$ Then%
\[
\limsup\limits_{\alpha\in A}\left(  x_{\alpha}y_{\alpha}\right)  \geq
\limsup\limits_{\alpha\in A}x_{\alpha}\liminf\limits_{\alpha\in A}y_{\alpha}.
\]
If, in addition, $\left(  \sup\nolimits_{\alpha\geq\beta}x_{\alpha}\right)
^{\infty}\in B_{\inf\limits_{\alpha\geq\beta}y_{\alpha}}^{s}$ for some $\beta$
and $\left(  \liminf\limits_{\alpha\in A}y_{\alpha}\right)  ^{\infty}\in
B_{\limsup x_{\alpha}}^{s}$ then%
\[
\liminf\limits_{\alpha}\left(  x_{\alpha}y_{\alpha}\right)  \leq
\limsup\limits_{\alpha}x_{\alpha}.\liminf\limits_{\alpha}y_{\alpha}.
\]

\end{lemma}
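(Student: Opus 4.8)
The plan is to prove the two inequalities separately, reducing each to the commutation rules for products with suprema and infima already available: Remark \ref{YY2-j} lets a fixed positive factor pass through a supremum \emph{unconditionally}, while Proposition \ref{YY2-E}(i) does the same for an infimum, but only under a band condition. Throughout I would write $s_\beta=\sup_{\alpha\ge\beta}x_\alpha$ and $t_\beta=\inf_{\alpha\ge\beta}y_\alpha$, so that $s_\beta\downarrow\limsup x_\alpha$ and $t_\beta\uparrow\liminf y_\alpha$, and I would use repeatedly that restricting an index to a tail $\{\beta\ge\gamma\}$ is cofinal and hence changes none of these limits.

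For the first (unconditional) inequality I would fix an index $\beta_0$ and set $w=t_{\beta_0}$. Since $y_\alpha\ge w$ for $\alpha\ge\beta_0$, we get $x_\alpha y_\alpha\ge w\,x_\alpha$; taking $\sup_{\alpha\ge\beta}$ for $\beta\ge\beta_0$ and applying Remark \ref{YY2-j} gives $\sup_{\alpha\ge\beta}(x_\alpha y_\alpha)\ge w\,s_\beta$. Taking $\inf_\beta$ and using the trivial bound $\inf_\beta(w s_\beta)\ge w\inf_\beta s_\beta$ yields $\limsup(x_\alpha y_\alpha)\ge w\,\limsup x_\alpha$. As $\beta_0$ was arbitrary, I would finish by taking $\sup_{\beta_0}$ and pulling $\limsup x_\alpha$ out of the supremum through Remark \ref{YY2-j}, turning $\sup_{\beta_0}w$ into $\liminf y_\alpha$.

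The second inequality is the substantive one, and the essential point is to keep the two indices separate. The naive single-index bound $\inf_{\alpha\ge\beta}(x_\alpha y_\alpha)\le s_\beta t_\beta$ is true but too lossy: its supremum over $\beta$ can strictly exceed $\limsup x_\alpha\cdot\liminf y_\alpha$ already in $\mathbb{R}$ (take $x_\alpha\to A$, $y_\alpha\to B$ with $\liminf y=B>A=\limsup x$). So instead I would fix $\gamma$ (at least as large as the index $\beta_1$ furnished by the hypothesis) and use $x_\alpha\le s_\gamma$ only for $\alpha\ge\gamma$, giving $x_\alpha y_\alpha\le s_\gamma y_\alpha$. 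For $\beta\ge\gamma$, Proposition \ref{YY2-E}(i) then factors $s_\gamma$ out of the infimum over $\alpha$, so $\inf_{\alpha\ge\beta}(x_\alpha y_\alpha)\le s_\gamma t_\beta$. Crucially I would take $\sup_{\beta\ge\gamma}$ \emph{first}: by Remark \ref{YY2-j} this collapses to $s_\gamma\,\liminf y_\alpha$, giving $\liminf(x_\alpha y_\alpha)\le s_\gamma\,\liminf y_\alpha$ for every $\gamma$. Only afterwards do I take $\inf_\gamma$ and invoke Proposition \ref{YY2-E}(i) a second time, now to factor $\liminf y_\alpha$ out of $\inf_\gamma s_\gamma=\limsup x_\alpha$; this is exactly where the hypothesis $(\liminf y_\alpha)^\infty\in B_{\limsup x_\alpha}^s$ enters, and it delivers $\liminf(x_\alpha y_\alpha)\le\limsup x_\alpha\cdot\liminf y_\alpha$.

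The main obstacle, and the step I would document most carefully, is verifying that the band condition for the \emph{inner} application of Proposition \ref{YY2-E}(i) survives the passage to tails, since the hypothesis only supplies $s_{\beta_1}^\infty\in B_{t_{\beta_1}}^s$ for a single index $\beta_1$. To propagate it to all $\gamma,\beta$ with $\beta\ge\gamma\ge\beta_1$, I would note that $s_\gamma\le s_{\beta_1}$ forces $s_\gamma^\infty\le s_{\beta_1}^\infty$, because $z\mapsto z^\infty$ is increasing (Remark \ref{YY2-o}), whence $s_\gamma^\infty\in B_{s_{\beta_1}^\infty}^s\subseteq B_{t_{\beta_1}}^s$, while $t_\beta\ge t_{\beta_1}$ only enlarges the target band, so $s_\gamma^\infty\in B_{t_\beta}^s$ as required. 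Once this bookkeeping is secured, both inequalities follow purely from the reordering of suprema and infima described above.
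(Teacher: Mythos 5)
Your proof is correct and follows essentially the same route as the paper's: the first inequality is obtained by freezing a tail infimum of the $y$'s and commuting the fixed factor through suprema via Remark \ref{YY2-j}, and the second by two applications of Proposition \ref{YY2-E}(i) (once with $u=\sup_{\alpha\ge\gamma}x_\alpha$ against a tail infimum of the $y$'s, once with $u=\liminf y_\alpha$ against $\inf_\gamma\sup_{\alpha\ge\gamma}x_\alpha$), exactly where the two band hypotheses enter in the paper. The only difference is cosmetic ordering of the sup/inf passages, plus the fact that you spell out the propagation of the hypothesis $\bigl(\sup_{\alpha\ge\beta}x_\alpha\bigr)^{\infty}\in B^{s}_{\inf_{\alpha\ge\beta}y_\alpha}$ to all later tails, a step the paper asserts without proof.
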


\begin{proof}
Fix $\beta,\gamma$ in $A$ with $\beta\geq\gamma.$ Then we have for each
$\theta\geq\beta,$%
\[
\sup\limits_{\alpha\geq\beta}\left(  x_{\alpha}y_{\alpha}\right)  \geq
x_{\theta}y_{\theta}\geq x_{\theta}\inf\limits_{\alpha\geq\beta}y_{\alpha}.
\]
According to Remark \ref{YY2-j} we have%
\begin{align*}
\sup\limits_{\alpha\geq\gamma}\left(  x_{\alpha}y_{\alpha}\right)   &
\geq\sup\limits_{\alpha\geq\beta}\left(  x_{\alpha}y_{\alpha}\right)  \geq
\sup\limits_{\theta\geq\beta}\left(  x_{\theta}\inf\limits_{\alpha\geq\beta
}y_{\alpha}\right) \\
&  =\sup\limits_{\theta\geq\beta}x_{\theta}.\inf\limits_{\alpha\geq\beta
}y_{\alpha}\geq\limsup x_{\alpha}.\inf\limits_{\alpha\geq\beta}y_{\alpha}.
\end{align*}
\newline Taking the supremum over $\beta$ we get%
\[
\sup\limits_{\alpha\geq\gamma}\left(  x_{\alpha}y_{\alpha}\right)  \geq
\sup\limits_{\beta\geq\gamma}\left(  \limsup x_{\alpha}.\inf\limits_{\alpha
\geq\beta}y_{\alpha}\right)  =\limsup x_{\alpha}.\liminf y_{\alpha}.
\]
From this we derive the inequality%
\[
\limsup\left(  x_{\alpha}y_{\alpha}\right)  \geq\limsup x_{\alpha}.\liminf
y_{\alpha}.
\]

(ii) Assume now $\left(  \sup\nolimits_{\alpha\geq\beta}x_{\alpha}\right)
^{\infty}\in B_{\inf\limits_{\alpha\geq\beta}y_{\alpha}}^{s}$ for some
$\beta\in A.$ Then
\[
\left(  \sup\nolimits_{\alpha\geq\gamma}x_{\alpha}\right)  ^{\infty}\in
B_{\inf\limits_{\alpha\geq\gamma}y_{\alpha}}^{s}\text{ for every }\gamma
\geq\beta.
\]
Now for $\theta\geq\gamma\geq\beta$ we have%
\[
\inf\limits_{\alpha\geq\gamma}\left(  x_{\alpha}y_{\alpha}\right)  \leq
y_{\theta}\sup\limits_{\alpha\geq\gamma}x_{\alpha}.
\]
It follows that%
\[
\inf\limits_{\alpha\geq\gamma}\left(  x_{\alpha}y_{\alpha}\right)  \leq
\inf\limits_{\theta\geq\gamma}\left(  y_{\theta}.\sup\limits_{\alpha\geq
\gamma}x_{\alpha}\right)  =\inf\limits_{\theta\geq\gamma}y_{\theta}%
.\sup\limits_{\alpha\geq\gamma}x_{\alpha}\leq\liminf y_{\alpha}.\sup
\limits_{\alpha\geq\gamma}x_{\alpha}.
\]
\newline where we have used \ref{YY2-E}.(i) in the equality above. For a fixed
$\gamma$ we have for every $\delta\geq\gamma,$%
\[
\inf\limits_{\alpha\geq\gamma}\left(  x_{\alpha}y_{\alpha}\right)  \leq
\inf\limits_{\alpha\geq\delta}\left(  x_{\alpha}y_{\alpha}\right)  \leq\liminf
y_{\alpha}.\sup\limits_{\alpha\geq\delta}x_{\alpha}.
\]
Now taking the infimum over $\delta\geq\gamma$ and using again Lemma
\ref{YY2-E}(i) we get%
\[
\inf\limits_{\alpha\geq\gamma}\left(  x_{\alpha}y_{\alpha}\right)  \leq
\inf\limits_{\beta\geq\gamma}\left(  \liminf y_{\alpha}.\sup\limits_{\alpha
\geq\beta}x_{\alpha}\right)  =\liminf y_{\alpha}.\limsup x_{\alpha}.
\]
as required.
\end{proof}

We conclude this section with a brief discussion on Boolean algebras. Recall
that a Boolean algebra is a distributive lattice $\mathcal{A}$ with smallest
and largest elements that is complemented. The latter means that for every
element $a\in\mathcal{A}$ there exists a (necessarily unique) element
$a^{\prime}$ such that $a\wedge a^{\prime}=0$ and $a\vee a^{\prime}=1,$ where
$0$ denotes the smallest element of $\mathcal{A}$ and $1$ its largest one. The
Boolean algebra $\mathcal{A}$ is said to be Dedekind complete if every
nonempty subset has a supremum.

Consider a Dedekind complete Riesz space $X$ with weak order unit $e.$ Three
crucial Boolean algebras in this context are isomorphic. The two first are
familiar: the set $\mathcal{C}\left(  e\right)  $ consisting of all components
of $e,$ and the set of all band projections $\mathcal{B}\left(  X\right)  .$
These are isomorphic through the mapping:

\begin{center}
$\mathcal{C}\left(  e\right)  \longrightarrow\mathcal{B}\left(  X\right)
;\qquad u\longmapsto B_{u}.$
\end{center}

It should be noted that this map preserves suprema and infima. Specifically,
for any set $\left\{  p_{\alpha}:\alpha\in A\right\}  $ of components of $e,$
$\sup B_{p_{\alpha}}=B_{\sup p_{\alpha}}$ and $\inf B_{p_{\alpha}}=B_{\inf
p_{\alpha}}.$ Observe that the first formula remains valid for general sets,
the second, however, fails in general. The third noteworthy Boolean algebra of
interest is similarly isomorphic to the aforementio\`{a}ned ones. It is
intricately associated to the space $X^{s}$ as it is consisting of infinite
parts of positive elements within $X^{s}.$ Let us employ the following
notation to represent it:

\begin{center}%
\[
\infty\left(  X\right)  =\left\{  x^{\infty}:x\in X_{+}^{s}\right\}  =\left\{
\infty_{B}:B\in\mathcal{B}\left(  X\right)  \right\}  .
\]

\end{center}

The following result tells us that $\infty\left(  X\right)  $ is isomorphic to
$\mathcal{B}\left(  X\right)  .$

\begin{proposition}
\label{YY2-B}Let $\left(  B_{\alpha}\right)  _{\alpha\in A}$ be a net in
$\mathcal{B}\left(  X\right)  .$ The following hold.

\begin{enumerate}
\item[(i)] $\inf\limits_{\alpha}\infty_{B_{\alpha}}=\infty_{\inf
\limits_{\alpha}B_{\alpha}}$ and $\sup\limits_{\alpha}\infty_{B_{\alpha}%
}=\infty_{\sup\limits_{\alpha}B_{\alpha}}.$

\item[(ii)] $\liminf\limits_{\alpha}\left(  \infty_{B_{\alpha}}\right)
=\infty_{\liminf_{\alpha}B_{\alpha}}\ $and\textbf{ }$\limsup\limits_{\alpha
}\left(  \infty_{B_{\alpha}}\right)  =\infty_{\limsup_{\alpha}B_{\alpha}}.$

\item[(iii)] The map $\phi:\mathcal{B}(X)\longrightarrow\left\{  \infty
_{B}:B\in\mathcal{B}\left(  X\right)  \right\}  ;$ $B\longmapsto\infty_{B}$ is
an order continuous Boolean algebra isomorphism.
\end{enumerate}
\end{proposition}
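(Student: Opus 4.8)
The plan is to reduce everything to statement (i), from which (ii) and (iii) follow formally. Throughout I identify each band $B_{\alpha}$ with the component $p_{\alpha}$ of $e$ it determines, so that $B_{\alpha}=B_{p_{\alpha}}$, and I exploit the identity $\infty_{B_{\alpha}}=\infty\cdot p_{\alpha}$, which is the special case $B=X$ of Remark \ref{YY2-l}(ii). Recall also that $p\longmapsto B_{p}$ preserves arbitrary suprema, $B_{\sup_{\alpha}p_{\alpha}}=\sup_{\alpha}B_{p_{\alpha}}$.

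For the supremum equality in (i), I would compute directly: writing $p=\sup_{\alpha}p_{\alpha}$ and applying the multiplicative formula of Remark \ref{YY2-j} to the fixed factor $y=\infty$,
\[
\sup_{\alpha}\infty_{B_{\alpha}}=\sup_{\alpha}\left(  \infty\cdot p_{\alpha}\right)  =\infty\cdot\sup_{\alpha}p_{\alpha}=\infty\cdot p=\infty_{B_{p}}=\infty_{\sup_{\alpha}B_{\alpha}}.
\]
The infimum equality I would handle differently, because the analogue of Remark \ref{YY2-j} for infima fails. Here Lemma \ref{X1} does the work: since $\infty_{B_{\alpha}}\in B_{\alpha}^{s}$ for every $\alpha$, it gives $\inf_{\alpha}\infty_{B_{\alpha}}\in\left(  \inf_{\alpha}B_{\alpha}\right)^{s}$, and as $\infty_{\inf_{\alpha}B_{\alpha}}$ is by definition the greatest element of that cone we obtain $\inf_{\alpha}\infty_{B_{\alpha}}\leq\infty_{\inf_{\alpha}B_{\alpha}}$. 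The reverse inequality is immediate from monotonicity of $B\longmapsto\infty_{B}$, since $\inf_{\alpha}B_{\alpha}\subseteq B_{\beta}$ for each $\beta$.

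Statement (ii) then follows by applying (i) twice. Setting $C_{\beta}=\inf_{\alpha\geq\beta}B_{\alpha}$, the infimum part of (i) applied to each tail gives $\inf_{\alpha\geq\beta}\infty_{B_{\alpha}}=\infty_{C_{\beta}}$, and the supremum part gives
\[
\liminf_{\alpha}\infty_{B_{\alpha}}=\sup_{\beta}\infty_{C_{\beta}}=\infty_{\sup_{\beta}C_{\beta}}=\infty_{\liminf_{\alpha}B_{\alpha}};
\]
the computation for $\limsup$ is the mirror image, using the supremum part on each tail and the infimum part on the outer index.

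For (iii), the key observation is that $B$ is recovered from $\infty_{B}$ as the band it generates, $B=B_{\infty_{B}}$ (every $0<y\in B$ satisfies $y\leq\infty_{B}$, and conversely $B_{\infty_{B}}\subseteq B$). This makes $\phi$ injective, and it is surjective onto $\infty(X)$ by definition; the same remark shows $B\subseteq B^{\prime}$ iff $\infty_{B}\leq\infty_{B^{\prime}}$, so $\phi$ is an order isomorphism. An order isomorphism between Boolean algebras automatically preserves finite meets and joins as well as the least and greatest elements ($\phi(\{0\})=0$ and $\phi(X)=\infty$), hence also complements, which are determined by the lattice operations together with $0$ and $1$; thus $\phi$ is a Boolean algebra isomorphism, and order continuity is precisely the content of (i). The main obstacle is the supremum equality in (i): its easy direction follows from $B_{\alpha}\subseteq\sup_{\alpha}B_{\alpha}$, but the reverse---that $\sup_{\alpha}\infty_{B_{\alpha}}$ is already \emph{all} of $\infty_{\sup_{\alpha}B_{\alpha}}$---is exactly where the order continuity of multiplication by $\infty$ (Remark \ref{YY2-j}) is essential and where the infimum argument via Lemma \ref{X1} cannot be substituted.
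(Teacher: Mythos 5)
Your proof is correct, and for part (i) it takes a genuinely different route from the paper's. For the supremum identity, the paper first verifies the finite case $\sup_{\alpha\in F}\infty_{B_{\alpha}}=\infty_{\sup_{\alpha\in F}B_{\alpha}}$ and then passes to arbitrary nets by writing both sides as suprema over finite subfamilies — a purely order-theoretic argument (whose directed-limit step is in fact asserted rather than detailed). You instead convert to components via $\infty_{B}=\infty\cdot P_{B}e$ (Remark \ref{YY2-l}(ii)) and invoke the order continuity of multiplication for suprema (Remark \ref{YY2-j}) together with $B_{\sup p_{\alpha}}=\sup B_{p_{\alpha}}$; this settles the hard direction in one line, at the cost of leaning on the $f$-algebra machinery of $X^{u}$ and the weak unit $e$ — available in the paper's context, but not needed for the paper's own argument, which would work in any Dedekind complete space. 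For the infimum identity, the paper argues directly: any $z\in X$ below $\bigwedge_{\alpha}\infty_{B_{\alpha}}$ lies in each $B_{\alpha}$, hence in $\bigwedge_{\alpha}B_{\alpha}$, hence below $\infty_{\wedge_{\alpha}B_{\alpha}}$; your appeal to Lemma \ref{X1} plus the maximality of $\infty_{\inf_{\alpha}B_{\alpha}}$ in $\left(\inf_{\alpha}B_{\alpha}\right)^{s}$ packages exactly this idea into a lemma the paper has already proved, which is a legitimate economy. Your derivations of (ii) and (iii) coincide with the paper's (which dismisses them as immediate/clear); you merely supply the details, including the useful observation $B=B_{\infty_{B}}$, which makes the isomorphism claim in (iii) genuinely self-contained. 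Your closing remark is also accurate: Lemma \ref{X1} only yields the easy inequality $\sup_{\alpha}\infty_{B_{\alpha}}\leq\infty_{\sup_{\alpha}B_{\alpha}}$, so some additional input — your multiplicative argument or the paper's finite-to-directed passage — is unavoidable there.
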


\begin{proof}
(i) The inequality\ $%
{\textstyle\bigwedge_{\alpha}}
\infty_{B_{\alpha}}\geq\infty_{\wedge_{\alpha}B_{\alpha}}\ $is evident.
Conversely, if $z\in\left[
{\textstyle\bigwedge_{\alpha}}
\infty_{B_{\alpha}}\right]  ^{\leq},$ then $z\leq\infty_{B_{\alpha}}$ for
every $\alpha,$ so $z\in B_{\alpha}$ for every $\alpha,$ and consequently
$z\in%
{\textstyle\bigwedge_{\alpha}}
B_{\alpha}.$ Therefore $z\leq\infty_{\wedge_{\alpha}B_{\alpha}},$ establishing
the desired inequality. For the second result, it is clear that if $F$ is
finite then%
\[
\sup\limits_{\alpha\in F}\infty_{B_{\alpha}}=\infty_{\sup\limits_{\alpha\in
F}B_{\alpha}}.
\]
Now it is sufficient to observe that%
\[
\sup\limits_{\alpha}\infty_{B_{\alpha}}=\sup\limits_{F\text{ finite }\subseteq
A}\sup\limits_{\alpha\in F}\infty_{B_{\alpha}},
\]
and%
\[
\infty_{\sup\limits_{\alpha}B_{\alpha}}=\sup\limits_{F\text{ finite }\subseteq
A}\infty_{\sup\limits_{\alpha\in F}B_{\alpha}}.
\]

(ii) is an immediate consequence of (i).

(iii) The fact that $\phi$ is an isomorphism is quite clear, and since it
respects infinite suprema and infima, it is order continuous by (i) and (ii).
\end{proof}

\section{The star map}

According to \cite[Theorem 5]{L-931} any universally complete Riesz space $X$
with weak unit $e$ is von Neumann regular (that is, for every $a\in X$ there
exists $b\in X$ such that $a=a^{2}b$ ) and it is not difficult to deduce from
this result that any order weak unit element is invertible (see for example
\cite[Remark 3.3]{L-424}). Here, we present a concise proof of this result
employing the concept of sup-completion.

\begin{lemma}
\label{YY2-s}Let $X$ be a universally complete Riesz space with weak unit $e,$
which is also an algebraic unit. Then every weak unit $x$ has an inverse in
$X$.
\end{lemma}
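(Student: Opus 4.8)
The statement claims that in a universally complete Riesz space $X$ whose weak unit $e$ is also an algebraic (multiplicative) unit, every weak unit $x$ is invertible. The natural strategy is to build the candidate inverse inside the sup-completion $X^{s}$ and then argue that it already lives in $X$. Since $X$ is universally complete we have $X=X^{u}$, and the sup-completion $X^{s}$ is available as a lattice-ordered cone containing $X$ with largest element $\infty$. For a positive weak unit $x$, the principal band $B_{x}$ generated by $x$ is all of $X$ (that is what ``weak unit'' means: $B_{x}=X$, equivalently $P_{x}=I$). So the first step is to reduce to the case $x\geq 0$ by replacing $x$ with $|x|$: if $|x|$ has an inverse $y$, then $x$ has inverse $(\mathrm{sgn}\,x)\,y$, and one checks $|x|$ is a weak unit whenever $x$ is.

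\textbf{Constructing the inverse.} For $x\in X_{+}$ a weak unit, define $y=\sup_{n}\bigl(e\wedge (n\,x^{\ast})\bigr)$ where I want $x^{\ast}$ to be the partial inverse already introduced in the preliminaries, or more concretely build $y$ directly as a supremum in $X^{s}$. The cleaner route, using the machinery of this paper, is this: form the element $z=\sup_{n}\bigl(n\,x\bigr)\wedge\infty$ reasoning is unnecessary; instead set $y=\sup_{n}\,\bigl(e\wedge n\,x^{2}\,(\dots)\bigr)$. Let me state the genuinely clean approach. Because $e$ is an algebraic unit, multiplication by $x$ is an $f$-algebra endomorphism, and the candidate inverse is
\[
y=\sup_{n}\bigl(e\wedge n\,x\bigr)\cdot\text{(something)}.
\]
The honest construction: consider the increasing sequence $e_{n}=e\wedge n x$ of components of $e$; since $x$ is a weak unit, $e_{n}\uparrow e$ in $X$. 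On the band where $x\geq e/n$ one can invert, and the local inverses paste together. Concretely, let $u_{n}\in X_{+}$ be the inverse of $x\vee (e/n)$ (which exists because $x\vee(e/n)\geq e/n>0$ is bounded below away from $0$ and hence invertible by an elementary $f$-algebra argument or a geometric-series/Neumann argument), and put $y=\sup_{n}u_{n}$ in $X^{s}$.

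\textbf{Finiteness and verification.} The main obstacle is precisely to show that $y=\sup_{n}u_{n}$ is a genuine element of $X$ (its finite part equals itself) rather than having a nontrivial infinite part in $X^{s}$. This is where weak-unit-ness of $x$ is essential: because $B_{x}=X$, the infinite part $y^{\infty}$ would have to be supported on a band on which $x$ vanishes, and no such nonzero band exists; formally one shows $x\,y=e$ forces $y^{\infty}=0$ via Lemma \ref{YY2-H}(iv), since $(xy)^{\infty}=x^{f}y^{\infty}+x^{\infty}y^{f}+x^{\infty}y^{\infty}=x\,y^{\infty}$ (as $x^{\infty}=0$, $x$ being finite), and $(xy)^{\infty}=e^{\infty}=0$, giving $x\,y^{\infty}=0$, whence $y^{\infty}=0$ because $x$ is a weak unit. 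Once $y\in X$, the identity $x\,y=e$ is obtained by passing to the limit in $x\,u_{n}=e\wedge n x^{2}/(\dots)$, or more cleanly by the order-continuity of multiplication by $x$ (Remark \ref{YY2-j}): $x\,y=x\sup_{n}u_{n}=\sup_{n}x\,u_{n}$, and one verifies $\sup_{n}x\,u_{n}=e$ directly from the definition of $u_{n}$. I expect the delicate point to be justifying the local invertibility of $x\vee(e/n)$ and the compatibility $u_{n}\leq u_{n+1}$ needed for the supremum, together with the verification $\sup_n x u_n = e$ using that $e\wedge nx\uparrow e$; the finiteness argument via the infinite part is then short.
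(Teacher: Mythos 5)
Your construction is, at its core, the same as the paper's: regularize $x$ to an invertible element, take the increasing family of inverses, form its supremum $y$ in $X_{+}^{s}$, use the weak-unit property of $x$ to kill the infinite part $y^{\infty}$, and pass to the limit; the reduction of a general weak unit to $|x|$ (multiplying by $P_{x^{+}}e-P_{x^{-}}e$) is also exactly the paper's final paragraph. The paper regularizes additively, inverting $x+\tfrac{1}{n}e$ and quoting \cite[Theorem 146.3]{b-1087} for its invertibility, whereas you invert $x\vee\tfrac{1}{n}e$; note that your suggested ``geometric-series/Neumann argument'' is not sufficient on its own, since $x$ need not be bounded above by a multiple of $e$, so you should invoke the same theorem the paper does (or paste together inverses from the bands on which $x$ is bounded, which is an additional argument).

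The one genuine flaw is the logical order at the end: you deduce $y^{\infty}=0$ from the identity $xy=e$, but you only propose to verify $xy=e$ ``once $y\in X$,'' which is precisely what $y^{\infty}=0$ was supposed to deliver. The circle is easy to break, and the paper's proof shows how: finiteness requires only the \emph{inequality} $xy\leq e$. Indeed $xu_{n}\leq\bigl(x\vee\tfrac{1}{n}e\bigr)u_{n}=e$ for every $n$, and Remark \ref{YY2-j} is valid in $X^{s}$ with no finiteness assumption on $y$, so $xy=\sup_{n}(xu_{n})\leq e$; since $x$ is a weak unit, $xy^{\infty}=\infty_{B_{y^{\infty}}}\leq xy\leq e$ forces $y^{\infty}=0$ (your Lemma \ref{YY2-H}(iv) computation also works, but only after $xy=e$ is known, so it must come last). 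Having $y\in X^{u}=X$, you still owe the equality $\sup_{n}(xu_{n})=e$, which in your multiplicative setup is not automatic: letting $B_{n}$ be the band generated by $\bigl(x-\tfrac{1}{n}e\bigr)^{+}$, one checks $P_{B_{n}}x\geq\tfrac{1}{n}P_{B_{n}}e$, hence $P_{B_{n}}\bigl(x\vee\tfrac{1}{n}e\bigr)=P_{B_{n}}x$ and $P_{B_{n}}(xu_{n})=P_{B_{n}}e$, so $xu_{n}\geq P_{B_{n}}e$; since $x$ is a weak unit, $B_{n}\uparrow X$ and therefore $\sup_{n}(xu_{n})\geq\sup_{n}P_{B_{n}}e=e$. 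The paper's additive regularization avoids this pasting step entirely: from $\bigl(x+\tfrac{1}{n}e\bigr)y_{n}=e$ one reads off $xy_{n}=e-\tfrac{1}{n}y_{n}$, and $\tfrac{1}{n}y_{n}\leq\tfrac{1}{n}y\downarrow 0$ once $y\in X$, giving $xy=e$ at once.
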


\begin{proof}
Assume first that $x\in X_{+}.$ We know by \cite[Theorem 146.3]{b-1087} that
$x+\dfrac{1}{n}e$ is invertible. Let $y_{n}$ denotes its inverse. Then $y_{n}$
is increasing and if we put $y=\sup y_{n}\in X_{+}^{s}$ we have%
\begin{equation}
\left(  x+\dfrac{1}{n}e\right)  y_{n}=xy_{n}+\dfrac{1}{n}y=e. \tag{*}%
\end{equation}
In particular, $xy_{n}\leq e.$ By taking the supremum over $n$ we get $xy\leq
e.$ As $x$ is a weak unit we get $y^{\infty}=0,$ that is, $y\in X.$ Now taking
the limit as $n\longrightarrow\infty$ in $\left(  \ast\right)  $ we obtain
$xy=e$ and we are done.

For the general case\textbf{ }we write\textbf{ }$x=x^{+}-x^{-}.$ So $\left(
x^{+}-x^{-}\right)  y=e.$ Write $a=P_{x^{+}}y$ and $b=P_{x^{-}}y.$ Then%
\[
e=\left(  x^{+}-x^{-}\right)  \left(  a+b\right)  =ax^{+}-bx^{-}.
\]
So $x\left(  a-b\right)  =e$ and we are done.
\end{proof}

For every $y\in X_{+}$ the band $B_{y^{f}}$ in $X^{u}$ generated by the finite
part of $y$ is a universally complete Riesz space with unit $p=P_{y^{f}}e.$
The inverse of $y^{f}$ in the band $B_{y^{f}}$ will be denoted by $y^{\ast}.$
Thus we have the following:%

\[
yy^{\ast}=y^{\ast}y=e_{y^{f}}=e_{B_{y^{f}}}:=P_{y^{f}}e.
\]
In particular $B_{y^{\ast}}=B_{y^{f}}\subseteq B_{y}.$

We list now some useful properties of the map $x\longmapsto x^{\ast}$ defined
from $X^{s}$ to $X^{u}.$

\begin{proposition}
\label{YY2-Jm}Let $X$ be a Dedekind complete Riesz space and $x,y\in X^{s}$.
Then the following hold.

\begin{enumerate}
\item[(i)] $0^{\ast}=\infty_{B}^{\ast}=0$ for every band in $X.$

\item[(ii)] $\left(  \lambda x\right)  ^{\ast}=\lambda^{-1}x^{\ast}$ for every
real $\lambda\neq0.$

\item[(iii)] If $x\perp y,$ then $x^{\ast}\perp y^{\ast}$ and\textbf{\ }%
$(x+y)^{\ast}=x^{\ast}+y^{\ast}.\ $In particular, $\left\vert x\right\vert
^{\ast}=\left(  x^{+}\right)  ^{\ast}+\left(  x^{-}\right)  ^{\ast}$ and
$\left(  Qx\right)  ^{\ast}=Qx^{\ast}$ for every band projection $Q$.

\item[(iv)] $\left(  xy\right)  ^{\ast}=x^{\ast}y^{\ast}\in B_{x^{\ast}%
y^{\ast}}=B_{x^{\ast}\wedge y^{\ast}}.$

\item[(v)] $\left(  x^{p}\right)  ^{\ast}=\left(  x^{\ast}\right)  ^{p}$ for
$x\in X_{+}^{s}$ and $p>0.$

\item[(vi)] If $0\leq x\leq y$ and $P$ is a band projection such that $P\leq
P_{x}.$ Then $Py^{\ast}\leq Px^{\ast}.$ In particular $P_{x}y^{\ast}\leq
x^{\ast}.$
\end{enumerate}
\end{proposition}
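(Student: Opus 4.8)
The plan is to reduce everything to the special case $P=P_{x}$ and then to the band $B_{x^{f}}$, where $x^{\ast}$ is an honest inverse. First observe that once the ``in particular'' inequality $P_{x}y^{\ast}\leq x^{\ast}$ is known, the general statement is immediate: the hypothesis $P\leq P_{x}$ means $PP_{x}=P$, so applying the order-preserving band projection $P$ to $P_{x}y^{\ast}\leq x^{\ast}$ gives $Py^{\ast}=PP_{x}y^{\ast}\leq Px^{\ast}$. Hence it suffices to prove $P_{x}y^{\ast}\leq x^{\ast}$, and I would write $P_{x}=P_{x^{f}}+P_{x^{\infty}}$ and treat the two pieces separately.

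For the infinite piece I would use monotonicity of infinite parts together with the band--order correspondence. Since $x\leq y$ we have $x^{\infty}\leq y^{\infty}$, so by Proposition \ref{YY2-B} $B_{x^{\infty}}\subseteq B_{y^{\infty}}$. As $y^{\ast}\in B_{y^{f}}$ and $B_{y^{f}}\perp B_{y^{\infty}}$, it follows that $y^{\ast}\perp B_{x^{\infty}}$, whence $P_{x^{\infty}}y^{\ast}=0$ and $P_{x}y^{\ast}=P_{x^{f}}y^{\ast}$. It remains to prove the inequality $P_{x^{f}}y^{\ast}\leq x^{\ast}$, now entirely inside the universally complete band $B_{x^{f}}$, in which $x^{f}$ is invertible with inverse $x^{\ast}$ and unit $e_{x^{f}}=P_{x^{f}}e$.

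The heart of the argument is to multiply the order inequality $x\leq y$ by $y^{\ast}\geq0$, which is order preserving by Remark \ref{YY2-j}, to get $xy^{\ast}\leq yy^{\ast}$. Using Remark \ref{YY2-l}(ii) and the disjointness established above, the infinite parts cancel: $yy^{\ast}=y^{f}y^{\ast}=e_{y^{f}}$ and $xy^{\ast}=x^{f}y^{\ast}$, so $x^{f}y^{\ast}\leq e_{y^{f}}$. Applying $P_{x^{f}}$ and using $P_{x^{f}}x^{f}=x^{f}$ gives $x^{f}P_{x^{f}}y^{\ast}=P_{x^{f}}(x^{f}y^{\ast})\leq P_{x^{f}}e_{y^{f}}\leq e_{x^{f}}$. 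Finally I would left-multiply this by $x^{\ast}\geq0$ and invoke $x^{\ast}x^{f}=e_{x^{f}}$ together with the facts that $e_{x^{f}}$ is the unit of $B_{x^{f}}$ and that both $P_{x^{f}}y^{\ast}$ and $x^{\ast}$ lie in $B_{x^{f}}$; this yields $P_{x^{f}}y^{\ast}=e_{x^{f}}P_{x^{f}}y^{\ast}=x^{\ast}x^{f}P_{x^{f}}y^{\ast}\leq x^{\ast}e_{x^{f}}=x^{\ast}$, completing the proof.

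The main obstacle, and the reason the result is not simply ``invert the inequality $x^{f}\leq y^{f}$'', is that the finite-part map is not monotone (Remark \ref{YY2-o}): in general $x^{f}\not\leq y^{f}$. One must therefore compare the two elements only after multiplying by $y^{\ast}$, where the infinite parts drop out by disjointness, and then transport the resulting estimate back through the partial inverse of $x^{f}$ within the band $B_{x^{f}}$. Keeping scrupulous track of which band each of $x^{\ast}$, $y^{\ast}$, $e_{x^{f}}$ and $e_{y^{f}}$ inhabits, so that the band projections and the partial inverse interact correctly, is where the care is required.
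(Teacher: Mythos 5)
Your proof of (vi) is correct and follows essentially the same route as the paper: both arguments multiply the inequality $x\leq y$ by the partial inverses $y^{\ast}$ and $x^{\ast}$ (the paper does this in a single step, you in two steps with the projection $P_{x^{f}}$ interposed), and both rest on the same key disjointness $x\leq y\Rightarrow x^{\infty}\perp y^{f}$, which kills the term $P_{x^{\infty}}y^{\ast}$. Your reduction of the general case $P\leq P_{x}$ to the inequality $P_{x}y^{\ast}\leq x^{\ast}$ also mirrors the paper's final step.
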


\begin{proof}
We will show only the last property. Multiply the inequality $x\leq y$ by
$y^{\ast}x^{\ast},$ yields%
\[
P_{x^{\ast}}y^{\ast}\leq P_{y^{\ast}}x^{\ast}\leq x^{\ast}.
\]
\newline Furthermore, as $x\leq y,$ we have $x^{\infty}\perp y^{f},$ implying
$P_{x^{\infty}}y^{\ast}=0.$ Now, given that $P\leq P_{x}=P_{x^{\ast}%
}+P_{x^{\infty}},$ we deduce that%
\[
Py^{\ast}\leq P_{x^{\ast}}y^{\ast}+P_{x^{\infty}}y^{\ast}=P_{x^{\ast}}y^{\ast
},
\]
and then%
\[
Py^{\ast}\leq PP_{x^{\ast}}y^{\ast}\leq PP_{y^{\ast}}x^{\ast}\leq Px^{\ast}.
\]
\newline Thus, the desired inequality is established.
\end{proof}

\begin{lemma}
\label{YY2-q}Let $(x_{\alpha})_{\alpha\in A}$ be a net in $X_{+}^{u}$ and
$x\in X^{s}$ such that $x_{\alpha}\uparrow x,$ then $x_{\alpha}^{\ast}%
\overset{o}{\longrightarrow}x^{\ast}$ in $X^{u}$.
\end{lemma}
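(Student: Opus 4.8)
The plan is to sandwich the net $(x_{\alpha}^{\ast})$ between its lower and upper order limits and show both equal $x^{\ast}$. Concretely, I will establish $\liminf_{\alpha}x_{\alpha}^{\ast}\geq x^{\ast}$ and $\limsup_{\alpha}x_{\alpha}^{\ast}\leq x^{\ast}$. Since both limits exist in $X^{s}$ and coincide with $x^{\ast}\in X^{u}$, the net is trapped between $\inf_{\beta\geq\gamma}x_{\beta}^{\ast}\uparrow x^{\ast}$ and $\sup_{\beta\geq\gamma}x_{\beta}^{\ast}\downarrow x^{\ast}$, so $x_{\alpha}^{\ast}\overset{o}{\longrightarrow}x^{\ast}$ in $X^{u}$ follows immediately.

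For the lower bound I invoke Proposition \ref{YY2-Jm}(vi) with the pair $0\leq x_{\alpha}\leq x$ and the projection $P_{x_{\alpha}}\leq P_{x_{\alpha}}$, which yields $P_{x_{\alpha}}x^{\ast}\leq x_{\alpha}^{\ast}$ for every $\alpha$. Because $x_{\alpha}\uparrow x$ forces $B_{x_{\alpha}}\uparrow B_{x}$ (Lemma \ref{X1}, applied with $B_{\alpha}=B_{x_{\alpha}}$), the band projections increase, $P_{x_{\alpha}}\uparrow P_{B_{x}}$; and since $x^{\ast}\in B_{x^{f}}\subseteq B_{x}$ we get $P_{x_{\alpha}}x^{\ast}\uparrow x^{\ast}$. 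Monotonicity of $\liminf$ then gives $x^{\ast}=\liminf_{\alpha}P_{x_{\alpha}}x^{\ast}\leq\liminf_{\alpha}x_{\alpha}^{\ast}$.

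The upper bound is the heart of the argument, and I will extract it from Lemma \ref{X4}. Its first (unconditional) inequality, applied to the nets $(x_{\alpha}^{\ast})$ and $(x_{\alpha})$ in $X_{+}^{s}$, reads $\limsup_{\alpha}(x_{\alpha}^{\ast}x_{\alpha})\geq\limsup_{\alpha}(x_{\alpha}^{\ast})\cdot\liminf_{\alpha}x_{\alpha}$. The left-hand side is transparent: since each $x_{\alpha}$ is finite, $x_{\alpha}^{\ast}x_{\alpha}=P_{x_{\alpha}}e$, an increasing net with supremum $P_{B_{x}}e$, so its $\limsup$ equals $P_{B_{x}}e$; and $\liminf_{\alpha}x_{\alpha}=x$ because the net increases to $x$. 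Writing $L:=\limsup_{\alpha}x_{\alpha}^{\ast}$, the inequality becomes $Lx\leq P_{B_{x}}e$.

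It remains to deduce $L\leq x^{\ast}$ from $Lx\leq P_{B_{x}}e$, and this is the step I expect to furnish the only real friction, since $x$ carries an infinite part. Splitting $x=x^{f}+x^{\infty}$ with $x^{\infty}=\infty_{B}$, the product decomposes as the disjoint sum $Lx=Lx^{f}+L\infty_{B}$, where by Remark \ref{YY2-l}(ii) the second summand equals $\infty_{B_{L}\cap B}$. The bound $L\infty_{B}\leq P_{B_{x}}e\leq e$ forces $B_{L}\cap B=0$, i.e. $P_{B}L=0$. Moreover each $x_{\alpha}^{\ast}\in B_{x_{\alpha}}\subseteq B_{x}$, so $P_{B_{x}}^{d}L=0$ and hence $L\in B_{x}=B_{x^{f}}\vee B$; together with $P_{B}L=0$ this gives $L\in B_{x^{f}}$. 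Restricting $Lx^{f}\leq P_{B_{x}}e$ to $B_{x^{f}}$ and multiplying by $x^{\ast}=(x^{f})^{-1}$ collapses the left side to $L(x^{f}x^{\ast})=L$ (as $x^{f}x^{\ast}=P_{x^{f}}e$ is the unit of $B_{x^{f}}$ and $L\in B_{x^{f}}$) and the right side to $P_{B_{x}}e\cdot x^{\ast}=x^{\ast}$, yielding $L\leq x^{\ast}$. Combining with the lower bound closes the sandwich and completes the proof.
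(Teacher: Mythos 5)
Your proof is correct and follows the same sandwich skeleton as the paper's own argument: the lower bound $x^{\ast}\leq\liminf x_{\alpha}^{\ast}$ via Proposition \ref{YY2-Jm}(vi) is essentially verbatim the paper's, and your endgame --- showing $L:=\limsup x_{\alpha}^{\ast}$ is disjoint from $x^{\infty}$, lies in $B_{x^{f}}^{s}$, and then multiplying $Lx^{f}\leq e$ by $x^{\ast}$ --- reproduces the paper's steps from its inequality $x\limsup x_{\alpha}^{\ast}\leq e$ onward. The one genuine divergence is how you reach that pivotal inequality: the paper derives it by hand, fixing $\beta$, using $P_{x_{\beta}}x_{\alpha}^{\ast}\leq x_{\beta}^{\ast}$ for $\alpha\geq\beta$ (Lemma \ref{YY2-Jm}), multiplying by $x_{\beta}$ to get $x_{\beta}\limsup x_{\alpha}^{\ast}\leq e$, and taking the supremum over $\beta$ via Remark \ref{YY2-j}; you instead invoke the unconditional half of Lemma \ref{X4} applied to the nets $(x_{\alpha}^{\ast})$ and $(x_{\alpha})$, together with the identity $x_{\alpha}^{\ast}x_{\alpha}=P_{x_{\alpha}}e\leq e$ and $\liminf x_{\alpha}=x$. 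This is a clean repackaging: it shortens the computation at the price of citing the heavier Lemma \ref{X4}, whose own proof rests on the same Remark \ref{YY2-j} machinery, so nothing is circular. Similarly, your use of Remark \ref{YY2-l}(ii) to convert $L\infty_{B}\leq e$ into $B_{L}\cap B=\left\{  0\right\}  $ is an equivalent rendering of the paper's direct observation that $x^{\infty}\limsup x_{\alpha}^{\ast}\leq e$ forces disjointness. One cosmetic slip: Lemma \ref{X1} gives memberships such as $\sup x_{\alpha}\in\left(  \sup B_{\alpha}\right)  ^{s}$, not the band identity $\sup_{\alpha}B_{x_{\alpha}}=B_{x}$; the latter is nevertheless immediate (each $B_{x_{\alpha}}\subseteq B_{x}$, and the sup-completion of the band $\sup_{\alpha}B_{x_{\alpha}}$ contains every $x_{\alpha}$, hence contains $x=\sup x_{\alpha}$), so your conclusion $P_{x_{\alpha}}x^{\ast}\uparrow x^{\ast}$ stands.
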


\begin{proof}
It is enough to prove the following inequalities:%
\[
\limsup x_{\alpha}^{\ast}\leq x^{\ast}\leq\liminf x_{\alpha}^{\ast}.
\]
According to Lemma\textbf{ }\ref{YY2-Jm} we infer from the inequalities
$x_{\alpha}\leq x$ (for $\alpha\in A$) that%
\[
P_{x_{\alpha}}x^{\ast}\leq x_{\alpha}^{\ast}.
\]
Since the net $\left(  x_{\alpha}\right)  $ is increasing, this implies that%
\begin{equation}
P_{x}x^{\ast}=x^{\ast}\leq\liminf x_{\alpha}^{\ast}. \label{H1}%
\end{equation}
On the other hand we have by Lemma \ref{YY2-Jm}.(iv) $P_{x_{\beta}}x_{\alpha
}^{\ast}\leq x_{\beta}^{\ast}$ for all $\alpha\geq\beta.$ So%
\[
\limsup\limits_{\alpha}P_{x_{\beta}}x_{\alpha}^{\ast}=P_{x_{\beta}}%
\limsup\limits_{\alpha}x_{\alpha}^{\ast}\leq x_{\beta}^{\ast}.
\]
Multiplying the above inequality by $x_{\beta},$ we obtain%
\[
x_{\beta}\limsup x_{\alpha}^{\ast}\leq x_{\beta}x_{\beta}^{\ast}\leq e.
\]
Taking the supremum over $\beta$ yields%
\begin{equation}
x\limsup x_{\alpha}^{\ast}\leq e. \label{H2}%
\end{equation}
In particular, $x^{\infty}\limsup x_{\alpha}^{\ast}\leq e,$ implying
$x^{\infty}\perp\limsup x_{\alpha}^{\ast}.$ As $\limsup x_{\alpha}^{\ast}$
belongs to $B_{x}^{s},$ we have%
\[
\limsup x_{\alpha}^{\ast}=P_{x^{\infty}}\limsup x_{\alpha}^{\ast}+P_{x^{f}%
}\limsup x_{\alpha}^{\ast}=P_{x^{\ast}}\limsup x_{\alpha}^{\ast}.
\]
Thus (\ref{H2}) implies that%
\begin{equation}
\limsup x_{\alpha}^{\ast}=P_{x^{\ast}}\limsup x_{\alpha}^{\ast}=x^{\ast
}x\limsup x_{\alpha}^{\ast}\leq x^{\ast}. \label{H3}%
\end{equation}
Now, combining (\ref{H1}) and (\ref{H3}), we conclude that $x_{\alpha}^{\ast
}\overset{o}{\longrightarrow}x^{\ast}$ in $X^{u}.$
\end{proof}

\begin{proposition}
\label{YY2-r}Let $\left(  x_{\alpha}\right)  _{\alpha\in A}$ and $\left(
y_{\alpha}\right)  _{\alpha\in A}$ be two sequences in $X_{+}^{u}$ such that
$x_{\alpha}\uparrow x$ and\ $y_{\alpha}\uparrow y$ in $X^{s}$ and $x_{\alpha
}^{2}\leq y_{\alpha}$ for all $\alpha.$ Then $x_{\alpha}y_{\alpha}^{\ast
}\overset{o}{\longrightarrow}x^{f}y^{\ast}\ $in $X^{u}.$
\end{proposition}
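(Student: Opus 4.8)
The plan is to set $z_{\alpha}=x_{\alpha}y_{\alpha}^{\ast}$ and, mimicking the proof of Lemma \ref{YY2-q}, to establish the two-sided estimate
\[
x^{f}y^{\ast}\leq\liminf z_{\alpha}\leq\limsup z_{\alpha}\leq x^{f}y^{\ast}.
\]
Since all $z_{\alpha}$ and the target $x^{f}y^{\ast}$ lie in $X_{+}^{u}$, this sandwich gives $z_{\alpha}\overset{o}{\longrightarrow}x^{f}y^{\ast}$ in $X^{u}$. Before the estimates I would record the structural consequences of the hypothesis $x_{\alpha}^{2}\leq y_{\alpha}$. Since $B_{x_{\alpha}}=B_{x_{\alpha}^{2}}\subseteq B_{y_{\alpha}}$ (Remark \ref{YY2-l}), we have $x_{\alpha}\in B_{y_{\alpha}}$, so $z_{\alpha}y_{\alpha}=x_{\alpha}y_{\alpha}^{\ast}y_{\alpha}=x_{\alpha}$ and $z_{\alpha}\in B_{x_{\alpha}}\subseteq B_{x}$; moreover $z_{\alpha}x_{\alpha}=x_{\alpha}^{2}y_{\alpha}^{\ast}\leq y_{\alpha}y_{\alpha}^{\ast}\leq e$.

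The crux is a reduction of the target. Passing to the supremum in $x_{\alpha}^{2}\leq y_{\alpha}$, and using that $x_{\alpha}^{2}\uparrow x^{2}$ (which follows from Remark \ref{YY2-j}), gives $x^{2}\leq y$. By Lemma \ref{YY2-H}(iv) one computes $(x^{2})^{\infty}=x^{\infty}$, so $x^{\infty}\leq y^{\infty}$, that is $B_{x^{\infty}}\subseteq B_{y^{\infty}}$. Hence $B_{y^{f}}\subseteq B_{x^{f}}$, so on $B_{y^{f}}$ the element $x$ is finite and equals $x^{f}$ there; since $y^{\ast}\in B_{y^{f}}$ this yields $xy^{\ast}=x^{f}y^{\ast}$. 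This is exactly the place where the quadratic domination is indispensable: it controls the $\infty\cdot 0$ indeterminacy that would otherwise arise on the infinite bands and makes the limit $x^{f}y^{\ast}$ (rather than the ill-defined $xy^{\ast}$) the correct one.

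For the lower estimate I would use, for $\alpha\geq\beta$, the inequalities $z_{\alpha}\geq x_{\beta}y_{\alpha}^{\ast}\geq x_{\beta}P_{y_{\alpha}}y^{\ast}$, where the second one is Proposition \ref{YY2-Jm}(vi) applied to $y_{\alpha}\leq y$. As $x_{\beta}P_{y_{\alpha}}y^{\ast}$ increases in $\alpha$ and $P_{y_{\alpha}}\uparrow P_{y}$, taking $\liminf$ over $\alpha$ and then the supremum over $\beta$ (with Remark \ref{YY2-j}) gives $\liminf z_{\alpha}\geq xy^{\ast}=x^{f}y^{\ast}$.

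The upper estimate is the delicate part and the step I expect to be the main obstacle. From $z_{\alpha}x_{\alpha}\leq e$ and $x_{\beta}\leq x_{\alpha}$ we get $z_{\alpha}x_{\beta}\leq e$ for $\alpha\geq\beta$; writing $w=\limsup z_{\alpha}$, pulling the fixed factor $x_{\beta}\in X_{+}^{u}$ through the limsup (Proposition \ref{YY2-E}) and taking the supremum over $\beta$ yields $xw\leq e$. This forces $w\perp x^{\infty}$ (else $x^{\infty}w$ would be infinite, by Remark \ref{YY2-l}), so $w\in B_{x^{f}}$ and $w$ is finite. Next, applying the first part of Lemma \ref{X4} to $\limsup(z_{\alpha}y_{\alpha})=\limsup x_{\alpha}=x$ gives $wy\leq x$. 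Multiplying $wy\leq x$ on $B_{y^{f}}$ by $y^{\ast}$ gives $P_{y^{f}}w\leq x^{f}y^{\ast}$, while on $B_{y^{\infty}}$ the inequality $(P_{y^{\infty}}w)y^{\infty}\leq P_{y^{\infty}}x$, together with $w\perp x^{\infty}$ and $B_{x^{\infty}}\subseteq B_{y^{\infty}}$, forces $P_{y^{\infty}}w=0$. Thus $w=P_{y^{f}}w\leq x^{f}y^{\ast}$, closing the sandwich. The anticipated difficulty is the careful bookkeeping on the two bands $B_{y^{f}}$ and $B_{y^{\infty}}$ in this last step, and the justification that products may be interchanged with $\limsup$ in the presence of infinite elements, which is what Lemma \ref{X4} and Proposition \ref{YY2-E} are invoked to license.
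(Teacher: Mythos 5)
Your proof is correct, but it takes a genuinely different route from the paper's. The paper's argument runs through Lemma \ref{YY2-q} and order continuity of multiplication in $X^{u}$: from $y_{\alpha}^{\ast}\overset{o}{\longrightarrow}y^{\ast}$ and $P_{x^{\ast}}x_{\alpha}\uparrow x^{f}$ it deduces $P_{x^{\ast}}\left(x_{\alpha}y_{\alpha}^{\ast}\right)\overset{o}{\longrightarrow}x^{f}y^{\ast}$, and then removes the projection $P_{x^{\ast}}$ by observing that the bound $x_{\alpha}y_{\alpha}^{\ast}\leq x_{\alpha}^{\ast}$ (the paper's use of the hypothesis $x_{\alpha}^{2}\leq y_{\alpha}$) forces $\limsup$ and $\liminf$ of $x_{\alpha}y_{\alpha}^{\ast}$ to lie in $B_{x^{\ast}}$. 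You instead establish the sandwich $x^{f}y^{\ast}\leq\liminf z_{\alpha}\leq\limsup z_{\alpha}\leq x^{f}y^{\ast}$ directly: the lower bound via Proposition \ref{YY2-Jm}(vi) applied to $y_{\alpha}\leq y$, and the upper bound via the two relations $xw\leq e$ (whence $w\perp x^{\infty}$) and $wy\leq x$ (from Lemma \ref{X4} together with $z_{\alpha}y_{\alpha}=x_{\alpha}$), followed by the decomposition of $w$ along $B_{y^{f}}\oplus B_{y^{\infty}}$. Your version avoids invoking both Lemma \ref{YY2-q} and order continuity of the product, at the cost of heavier band bookkeeping; the paper's version is shorter, but its final step (membership of the $\limsup$ and $\liminf$ in $B_{x^{\ast}}$) is left terse, and your upper-bound analysis is essentially a self-contained justification of that kind of claim.

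One slip should be repaired: the assertion $B_{y^{f}}\subseteq B_{x^{f}}$ is false in general. Take $X=\mathbb{R}^{2}$, $x=(1,0)$, $y=(4,5)$: then $x^{2}\leq y$ but $B_{y^{f}}=X\not\subseteq B_{x^{f}}=\mathbb{R}\times\{0\}$. From $B_{x^{\infty}}\subseteq B_{y^{\infty}}$ one only gets $B_{y^{f}}\subseteq B_{x^{\infty}}^{d}$, i.e. $x^{\infty}\perp y^{f}$. Fortunately this weaker, correct statement is all your argument actually uses: it gives $x^{\infty}y^{\ast}=0$, hence $xy^{\ast}=x^{f}y^{\ast}$ and $P_{y^{f}}x=P_{y^{f}}x^{f}$, so everything downstream goes through unchanged. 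Similarly, the step ``so $w\in B_{x^{f}}$ and $w$ is finite'' deserves one explicit line: $w\in B_{x}^{s}$ by Lemma \ref{X1} (since $z_{\alpha}\in B_{x_{\alpha}}\subseteq B_{x}$), so $w\perp x^{\infty}$ yields $w=P_{x^{f}}w$, and then $xw\leq e$ gives $w\leq x^{\ast}\in X^{u}$; in fact finiteness of $w$ is never needed once $w\leq x^{f}y^{\ast}$ is in hand.
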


\begin{proof}
According to Lemma \ref{YY2-q}, we have that%
\begin{equation}
y_{\alpha}^{\ast}\overset{o}{\longrightarrow}y^{\ast}\text{ in }X^{u}.
\label{YY2-r-1}%
\end{equation}
Moreover, from the inequality $x_{\alpha}^{2}\leq y_{\alpha}$ it follows that%
\[
x_{\alpha}y_{\alpha}^{\ast}=e_{x_{\alpha}}x_{\alpha}y_{\alpha}^{\ast}\leq
x_{\alpha}^{\ast}.
\]
Notably, the sequence $\left(  x_{\alpha}y_{\alpha}^{\ast}\right)  $ is order
bounded in $X^{u}.$ Now as $x_{\alpha}\uparrow x$ we have $e_{x^{\ast}%
}x_{\alpha}\uparrow x^{f}\in X^{u}$ and thus%
\begin{equation}
P_{x^{\ast}}x_{\alpha}\overset{o}{\longrightarrow}x^{f}\text{ in }X^{u}.
\label{YY2-r-2}%
\end{equation}
As the product is order continuous in $X^{u}$ we derive from \ref{YY2-r-1} and
\ref{YY2-r-2} that $P_{x^{\ast}}\left(  x_{\alpha}y_{\alpha}^{\ast}\right)
\overset{o}{\longrightarrow}x^{f}y^{\ast}$ in $X^{u}.$ In particular%
\begin{align}
x^{f}y^{\ast}  &  =\limsup\left(  P_{x^{\ast}}\left(  x_{\alpha}y_{\alpha
}^{\ast}\right)  \right)  =P_{x^{\ast}}\limsup\left(  x_{\alpha}y_{\alpha
}^{\ast}\right) \label{YY2-r-3}\\
&  =\liminf\left(  P_{x^{\ast}}\left(  x_{\alpha}y_{\alpha}^{\ast}\right)
\right)  =P_{x^{\ast}}\liminf\left(  x_{\alpha}y_{\alpha}^{\ast}\right)
.\nonumber
\end{align}
To conclude our proof observe that both $\limsup\left(  x_{\alpha}y_{\alpha
}^{\ast}\right)  $ and $\liminf\left(  x_{\alpha}y_{\alpha}^{\ast}\right)  $
belong to the band $B_{x^{\ast}}$ in $X^{u},$ enabling us to simplify
(\ref{YY2-r-3}) by removing $P_{x^{\ast}}$ from the right side members.
\end{proof}

\begin{lemma}
\label{L1}Let $\left(  x_{\alpha}\right)  _{\alpha\in A}$ be an order bounded
net in $X_{+}^{u}$ and $u\in X_{+}^{u}.$ Then the following statements are valid.

\begin{enumerate}
\item[(i)] $\inf\limits_{\alpha}\left(  ux_{\alpha}\right)  =u\inf x_{\alpha}$
and $\sup\limits_{\alpha}\left(  ux_{\alpha}\right)  =u\sup x_{\alpha}$.

\item[(ii)] $\liminf\limits_{\alpha}\left(  ux_{\alpha}\right)  =u\liminf
\limits_{\alpha}x_{\alpha}$ and $\limsup\limits_{\alpha}\left(  ux_{\alpha
}\right)  =u\limsup\limits_{\alpha}x_{\alpha}.$
\end{enumerate}
\end{lemma}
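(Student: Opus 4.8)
The plan is to reduce (ii) to (i) by unwinding the definitions of $\liminf$ and $\limsup$, and to obtain (i) directly from the two multiplication formulas established earlier. The crucial point is that $u\in X_{+}^{u}$ forces $u^{\infty}=0$, so the obstruction that normally blocks the commutation of multiplication with infima simply disappears.

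First I would handle (i). Since $(x_{\alpha})$ is order bounded in the Dedekind complete space $X^{u}$, both $\sup_{\alpha}x_{\alpha}$ and $\inf_{\alpha}x_{\alpha}$ exist in $X_{+}^{u}\subseteq X_{+}^{s}$. The supremum identity $\sup_{\alpha}(ux_{\alpha})=u\sup_{\alpha}x_{\alpha}$ is exactly Remark \ref{YY2-j} applied to the set $\{x_{\alpha}:\alpha\in A\}$ together with the element $u$. For the infimum, I would observe that $u^{\infty}=0$ lies in every band, in particular $u^{\infty}\in B_{\inf_{\alpha}x_{\alpha}}^{s}$, so the hypothesis of Proposition \ref{YY2-E}(i) is automatically met; its part (b) then yields $\inf_{\alpha}(ux_{\alpha})=u\inf_{\alpha}x_{\alpha}$. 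This is the step where finiteness of $u$ is essential, since the infimum formula fails for general $u$, as Remark \ref{YY2-W} illustrates.

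Next I would derive (ii) from (i). Writing $\limsup_{\alpha}x_{\alpha}=\inf_{\beta}s_{\beta}$ with $s_{\beta}=\sup_{\alpha\geq\beta}x_{\alpha}$, I would first apply the infimum formula of (i) to the decreasing net $(s_{\beta})_{\beta}$, which again lies in $X_{+}^{u}$ by order boundedness, to get $u\limsup_{\alpha}x_{\alpha}=\inf_{\beta}(us_{\beta})$, and then apply the supremum formula of (i) to each tail, $us_{\beta}=\sup_{\alpha\geq\beta}(ux_{\alpha})$. Combining these gives $u\limsup_{\alpha}x_{\alpha}=\inf_{\beta}\sup_{\alpha\geq\beta}(ux_{\alpha})=\limsup_{\alpha}(ux_{\alpha})$. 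The $\liminf$ identity is obtained symmetrically, writing $\liminf_{\alpha}x_{\alpha}=\sup_{\beta}\inf_{\alpha\geq\beta}x_{\alpha}$ and interchanging the roles of the two formulas.

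There is no deep obstacle here; the only care needed is bookkeeping. One must check that the intermediate nets $(\sup_{\alpha\geq\beta}x_{\alpha})_{\beta}$ and $(\inf_{\alpha\geq\beta}x_{\alpha})_{\beta}$ stay positive and order bounded so that both formulas of (i) may legitimately be reapplied to them, and it is precisely the order boundedness hypothesis that guarantees every tail supremum exists in $X^{u}$. The one conceptual point worth highlighting is the reason the infimum formula holds at all in this setting, namely the vanishing of $u^{\infty}$, since in general the multiplication–infimum identity is false.
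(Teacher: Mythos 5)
Your proposal is correct, and it reaches the result by citation rather than by computation. For the infimum half of (i) the paper reruns the partial-inverse argument from scratch: given a positive lower bound $z$ of $\left\{ ux_{\alpha}\right\}$, it notes $z\in B_{u}$, multiplies by $u^{\ast}$ to get $u^{\ast}z\leq\inf_{\alpha}x_{\alpha}$, and recovers $z=uu^{\ast}z\leq u\inf_{\alpha}x_{\alpha}$; the supremum half is declared ``similar'' and (ii) is declared an immediate consequence of (i). You instead observe that both halves of (i) are already on record: the supremum identity is Remark \ref{YY2-j}, and the infimum identity is exactly clause (b) of Proposition \ref{YY2-E}(i), whose hypothesis $u^{\infty}\in B_{\inf_{\alpha}x_{\alpha}}^{s}$ is vacuous because $u^{\infty}=0$. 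Since Proposition \ref{YY2-E}(i)(b) is itself proved by the very $u^{\ast}$-computation the paper repeats here, the two proofs share the same mathematical core; yours simply makes the redundancy visible (Lemma \ref{L1}(i) is a special case of results already established), at the cost of importing the Section 3 machinery, whereas the paper's version keeps the lemma self-contained. Your part (ii) spells out the unwinding that the paper compresses into one line: writing $\limsup_{\alpha}x_{\alpha}=\inf_{\beta}\sup_{\alpha\geq\beta}x_{\alpha}$, applying the supremum formula to each tail and the infimum formula to the decreasing net of tail suprema, and arguing symmetrically for $\liminf$. Your check that these tail nets remain positive and order bounded (so that (i) legitimately applies to them) is the right bookkeeping, and there is no circularity, since neither Remark \ref{YY2-j} nor Proposition \ref{YY2-E} depends on Lemma \ref{L1}.
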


\begin{proof}
(i) The inequality $\inf\limits_{\alpha}\left(  ux_{\alpha}\right)  \geq u\inf
x_{\alpha}$ is obvious. To establish the reverse inequality, let $z\in X$ be a
positive lower bound of $\left\{  ux_{\alpha}:\alpha\in A\right\}  .$ It is
enough to show that $z\leq u\inf x_{\alpha}.$ Observe that $u^{\ast}z\leq
u^{\ast}ux_{\alpha}\leq x_{\alpha}$ and then $u^{\ast}z\leq\inf x_{\alpha}$.
But as $z$ belongs to the band $B_{u}$ we have $z=uu^{\ast}z\leq u\inf
x_{\alpha}.$ This proves the first assertion; the proof of the second
assertion follows a similar line of reasoning.

(ii) This is an immediate consequence of (i).
\end{proof}

\begin{lemma}
\label{YY2-t}Let $\left(  x_{\alpha}\right)  $ and $\left(  y_{\alpha}\right)
$ be two order bounded nets in $X_{+}^{u}.$ Then the following inequalities
hold.%
\begin{align*}
\liminf x_{\alpha}.\liminf y_{\alpha}  &  \leq\liminf\left(  x_{\alpha
}y_{\alpha}\right)  \leq\liminf x_{\alpha}.\limsup y_{\alpha}\\
&  \leq\limsup\left(  x_{\alpha}y_{\alpha}\right)  \leq\limsup x_{\alpha
}.\limsup y_{\alpha}.
\end{align*}

\end{lemma}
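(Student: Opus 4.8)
The plan is to prove the chain of four inequalities in Lemma \ref{YY2-t} for order bounded nets $(x_\alpha),(y_\alpha)$ in $X_+^u$. The middle inequality $\liminf x_\alpha\cdot\limsup y_\alpha\le\limsup(x_\alpha y_\alpha)$ is entirely routine once the two outer inequalities are in hand; indeed it follows by applying the same reasoning as the first inequality but comparing against the supremum tails, so I would not belabor it and would instead concentrate on the first and the last inequalities, which are genuinely symmetric to each other (swap the roles of $\liminf$ and $\limsup$, or replace the nets by suitable reindexings), so that proving one essentially gives the other.

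The key tool throughout is Lemma \ref{L1}, which lets me pull an order bounded factor $u\in X_+^u$ through both $\inf$ and $\sup$ of an order bounded net in $X_+^u$. To establish the first inequality $\liminf x_\alpha\cdot\liminf y_\alpha\le\liminf(x_\alpha y_\alpha)$, I would fix $\beta$ and work with the tails: for every $\theta\ge\beta$ we have $x_\theta y_\theta\ge\bigl(\inf_{\alpha\ge\beta}x_\alpha\bigr)y_\theta$, and since $\inf_{\alpha\ge\beta}x_\alpha\in X_+^u$ is order bounded I can apply Lemma \ref{L1}(i) to get
\[
\inf_{\theta\ge\beta}(x_\theta y_\theta)\ge\inf_{\theta\ge\beta}\Bigl(\inf_{\alpha\ge\beta}x_\alpha\cdot y_\theta\Bigr)=\inf_{\alpha\ge\beta}x_\alpha\cdot\inf_{\theta\ge\beta}y_\theta.
\]
Taking the supremum over $\beta$ on the right gives $\bigl(\sup_\beta\inf_{\alpha\ge\beta}x_\alpha\bigr)\bigl(\sup_\beta\inf_{\theta\ge\beta}y_\theta\bigr)=\liminf x_\alpha\cdot\liminf y_\alpha$ by another application of Lemma \ref{L1} (or Remark \ref{YY2-j}) to commute the product with the increasing supremum, while the left side is dominated by $\liminf(x_\alpha y_\alpha)$. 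The last inequality $\limsup(x_\alpha y_\alpha)\le\limsup x_\alpha\cdot\limsup y_\alpha$ runs dually: for $\theta\ge\beta$ we bound $x_\theta y_\theta\le\bigl(\sup_{\alpha\ge\beta}x_\alpha\bigr)y_\theta$, take suprema over $\theta\ge\beta$ using Lemma \ref{L1}(i) to factor out the fixed $\sup_{\alpha\ge\beta}x_\alpha$, then take the infimum over $\beta$.

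The main obstacle I anticipate is the bookkeeping in the final step of each argument: after factoring, I am left with an expression like $\inf_\beta\bigl(\sup_{\alpha\ge\beta}x_\alpha\cdot\sup_{\theta\ge\beta}y_\theta\bigr)$ and I must compare it with the product of the two separate infima $\bigl(\inf_\beta\sup_{\alpha\ge\beta}x_\alpha\bigr)\bigl(\inf_\beta\sup_{\theta\ge\beta}y_\theta\bigr)=\limsup x_\alpha\cdot\limsup y_\alpha$. Here the product does not commute freely with infima in $X_+^s$ in general, so I cannot simply split the $\inf$ across the product; I must instead keep one factor fixed, pass to the infimum in the other coordinate first via Lemma \ref{L1}, and only then take the remaining infimum, mirroring the two stage argument used in the proof of Lemma \ref{X4}. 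Because both nets live in $X_+^u$ and are order bounded, all the relevant $\inf$/$\sup$ tails are again order bounded elements of $X_+^u$, so Lemma \ref{L1} applies at each stage and the subtlety is purely in the order of operations rather than in any infinite-part phenomenon. I expect the whole proof to be short, essentially assembling the two symmetric computations and noting that the middle inequality is immediate.
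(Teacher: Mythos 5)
Your proposal is correct and is essentially the paper's own argument: pointwise tail estimates followed by two-stage applications of Lemma \ref{L1}, factoring a fixed order bounded element of $X_{+}^{u}$ out of a tail infimum or supremum one coordinate at a time (the paper implements exactly the device you describe for not splitting an infimum across a product, by decoupling the tail indices and working with $\beta\geq\gamma$ throughout). The only slip is that the chain has two middle inequalities and you address only $\liminf x_{\alpha}\cdot\limsup y_{\alpha}\leq\limsup\left(x_{\alpha}y_{\alpha}\right)$, omitting $\liminf\left(x_{\alpha}y_{\alpha}\right)\leq\liminf x_{\alpha}\cdot\limsup y_{\alpha}$; this is harmless, since it follows by the identical two-stage scheme (bound $y_{\alpha}$ in the tail infimum by the coarser supremum tail $\sup_{\alpha'\geq\gamma}y_{\alpha'}$, factor it out via Lemma \ref{L1}, then take the infimum over $\gamma$), and the paper itself disposes of both middle inequalities with ``the two other inequalities are similar.''
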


\begin{proof}
(i) For any $\gamma$ large enough and $\beta\geq\gamma$ we have%
\[
\sup_{\alpha\geq\beta}\left(  x_{\alpha}y_{\alpha}\right)  \leq\sup
_{\alpha\geq\beta}x_{\alpha}.\sup_{\alpha\geq\gamma}y_{\alpha}.
\]
So taking the infimum over $\beta$ it follows from Lemma \ref{L1} that%
\[
\limsup\left(  x_{\alpha}y_{\alpha}\right)  \leq\limsup x_{\alpha}%
.\sup_{\alpha\geq\gamma}y_{\alpha}.
\]
Hence, taking the infimum over $\gamma$, we obtain%
\[
\limsup\left(  x_{\alpha}y_{\alpha}\right)  \leq\limsup\left(  x_{\alpha
}\right)  \limsup\left(  y_{\alpha}\right)  .
\]

(ii) For $\beta\geq\gamma$ we have%
\[
\inf_{\alpha\geq\beta}\left(  x_{\alpha}y_{\alpha}\right)  \geq\inf
_{\alpha\geq\beta}x_{\alpha}.\inf_{\alpha\geq\gamma}y_{\alpha}.
\]
So taking the supremum over $\beta$ it follows from Lemma \ref{L1} that%
\[
\liminf\left(  x_{\alpha}y_{\alpha}\right)  \geq\liminf x_{\alpha}%
.\inf_{\alpha\geq\gamma}y_{\alpha}.
\]
Hence, taking the supremum over $\gamma$, we obtain%
\[
\liminf\left(  x_{\alpha}y_{\alpha}\right)  \geq\liminf x_{\alpha}.\liminf
y_{\alpha}.
\]
The two other inequalities are similar.
\end{proof}

\begin{lemma}
\label{YY3-a}Let $\left(  x_{\alpha}\right)  $ and $\left(  y_{\alpha}\right)
$ be two order bounded sequences in $X_{+}^{u}.$ If $x_{\alpha}\overset
{o}{\longrightarrow}x$ then $\limsup\left(  x_{\alpha}y_{\alpha}\right)
=x\limsup y_{\alpha}$ and $\liminf\left(  x_{\alpha}y_{\alpha}\right)
=x\liminf y_{\alpha}.$
\end{lemma}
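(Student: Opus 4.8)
The plan is to read off the conclusion from the chain of inequalities already established in Lemma \ref{YY2-t}, exploiting the hypothesis $x_{\alpha}\overset{o}{\longrightarrow}x$ to collapse the two outer terms. Since $(x_{\alpha})$ is order bounded and order converges to $x$, we have $\liminf x_{\alpha}=\limsup x_{\alpha}=x$; this is the only place the convergence hypothesis enters, and I would state it at the outset as a standard fact about order convergence of order bounded nets.

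For the $\limsup$ identity I would apply Lemma \ref{YY2-t} directly to the pair $(x_{\alpha}),(y_{\alpha})$. The relevant portion of that chain reads
\[
\liminf x_{\alpha}.\limsup y_{\alpha}\leq\limsup\left(  x_{\alpha}y_{\alpha
}\right)  \leq\limsup x_{\alpha}.\limsup y_{\alpha}.
\]
Substituting $\liminf x_{\alpha}=\limsup x_{\alpha}=x$ makes both bounds equal to $x\limsup y_{\alpha}$, forcing $\limsup(x_{\alpha}y_{\alpha})=x\limsup y_{\alpha}$.

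The $\liminf$ identity does not follow from the same application, because the chain in Lemma \ref{YY2-t} is asymmetric and its lower portion only sandwiches $\liminf(x_{\alpha}y_{\alpha})$ between $x\liminf y_{\alpha}$ and $x\limsup y_{\alpha}$, which is too loose. This asymmetry is the one genuine obstacle, and the key observation that resolves it is that multiplication in $X^{u}$ is commutative, so $x_{\alpha}y_{\alpha}=y_{\alpha}x_{\alpha}$. I would therefore apply Lemma \ref{YY2-t} a second time with the roles of the two nets interchanged, i.e.\ to the pair $(y_{\alpha}),(x_{\alpha})$, obtaining
\[
\liminf y_{\alpha}.\liminf x_{\alpha}\leq\liminf\left(  y_{\alpha}x_{\alpha
}\right)  \leq\liminf y_{\alpha}.\limsup x_{\alpha}.
\]
Here both outer terms equal $x\liminf y_{\alpha}$ after substituting $\liminf x_{\alpha}=\limsup x_{\alpha}=x$, and since $y_{\alpha}x_{\alpha}=x_{\alpha}y_{\alpha}$ this yields $\liminf(x_{\alpha}y_{\alpha})=x\liminf y_{\alpha}$, completing the proof. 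Throughout I would only need that the hypotheses of Lemma \ref{YY2-t} are met, namely that both $(x_{\alpha})$ and $(y_{\alpha})$ are order bounded nets in $X_{+}^{u}$, which is exactly what is assumed.
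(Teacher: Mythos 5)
Your proposal is correct and follows essentially the same route as the paper: both proofs sandwich the product using Lemma \ref{YY2-t} after observing that order boundedness together with $x_{\alpha}\overset{o}{\longrightarrow}x$ gives $\liminf x_{\alpha}=\limsup x_{\alpha}=x$. The only difference is cosmetic: the paper writes the bound $\liminf\left(x_{\alpha}y_{\alpha}\right)\leq\limsup x_{\alpha}.\liminf y_{\alpha}$ without comment, tacitly using the role-swapped form of Lemma \ref{YY2-t}, whereas you justify that step explicitly via commutativity of the product in $X^{u}$.
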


\begin{proof}
By Lemma \ref{YY2-t} we have%
\begin{align*}
x\liminf y_{\alpha}  &  =\liminf x_{\alpha}.\liminf y_{\alpha}\leq\liminf
x_{\alpha}y_{\alpha}\\
&  \leq\limsup x_{\alpha}.\liminf y_{\alpha}=x\liminf y_{\alpha}%
\end{align*}
Hence $\liminf\left(  x_{\alpha}y_{\alpha}\right)  =x\liminf y_{\alpha}.$ The
second equality follows similarly.
\end{proof}

We will prove now a multiplicative decomposition property in $X_{+}^{s}.$

\begin{proposition}
\label{YY2-P}Let $X$ be a Dedekind complete Riesz space and $x,y,z\in
X_{+}^{s}.$ If $x\leq yz$ then there exists a decomposition $x=ab$ of $x$ with
$0\leq a\leq y$ and $0\leq b\leq z.$
\end{proposition}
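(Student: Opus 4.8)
The goal is to factor $x \le yz$ as $x = ab$ with $0 \le a \le y$ and $0 \le b \le z$, mirroring the classical decomposition in $L^0$ or in measure spaces. The natural candidates come from the star map developed in Section~3: since $x \le yz$ and we want $a \le y$, the obvious try is to set $a := x (z^{\ast})$ suitably truncated by $y$, and $b$ to be a corresponding "ratio" recovered using the partial inverse. The first thing I would do is reduce to a clean setting. Using the infinite/finite decompositions and Lemma~\ref{YY2-H}.(iv), I would separate the bands where $yz$ is infinite from where it is finite. On the band $B_{(yz)^{\infty}}$ the inequality $x \le yz$ carries no finite information, so there one can afford to push the whole mass of $x$ into one factor (take $a = x \wedge y$ or $a = \infty_B$ appropriately and $b = \infty_B$ or a bounded partner); the real content lives on the band where both $y$ and $z$ have finite parts. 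So I would assume from the start, after projecting, that we work in $B_{y^{f}} \cap B_{z^{f}}$ and treat the infinite parts by a separate, easier argument.

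On the finite band the plan is explicit. Set
\[
a \;=\; y \wedge \bigl(x\, z^{\ast}\bigr), \qquad b \;=\; a^{\ast} x,
\]
where $z^{\ast}$ and $a^{\ast}$ are the partial inverses supplied by the star map. Clearly $0 \le a \le y$ by construction. I then need three verifications: that $ab = x$, that $0 \le b \le z$, and that $a^{\ast}$ interacts correctly with the band $B_x$ so that $a^{\ast} x$ lands where it should. For $ab = x$, I would use that $a^{\ast}a = e_{B_{a^{f}}} = P_{a}e$ (the identity on the band $B_a$), together with the fact that $B_a \subseteq B_x$ forces $P_a x = x$ once one checks $x \le a \cdot(\text{something} \le z)$, i.e. that $a$ is a weak unit on the band supporting $x$. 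The identity $a a^{\ast} x = P_a x = x$ is then the engine that produces $ab = x$.

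The bound $b \le z$ is where the inequality $x \le yz$ is actually consumed. Multiplying $x \le yz$ by $a^{\ast}$ and using Proposition~\ref{YY2-Jm}.(iv) (so $a^{\ast}(yz) = a^{\ast}y^{\ast\ast}\cdots$ — more directly, using $a \le y$ to get $a^{\ast} \ge y^{\ast}$ in the appropriate sense via Proposition~\ref{YY2-Jm}.(vi)) one estimates $b = a^{\ast}x \le a^{\ast} y z$. The point is that on the band $B_a$ we have $a^{\ast} a = P_a e$ and $a \le y$ gives, after multiplying, $a^{\ast} y \le e$ fails in general, so instead I would exploit $a \le x z^{\ast}$, i.e. $a z \le x$ on $B_z$, hence $b = a^{\ast} x \ge a^{\ast} a z = P_a z \;$ and symmetrically $a = y \wedge x z^{\ast} \le x z^{\ast}$ yields $a z \le x$, giving $b = a^{\ast} x \le z$ after cancelling $a^{\ast} a$ against the $a z$ term. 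The two candidate definitions ($a = y \wedge xz^{\ast}$ versus recovering $b$) must be reconciled so that neither $a$ nor $b$ overshoots; this bookkeeping, together with checking that all products and partial inverses stay inside the correct bands $B_{x^{f}}, B_{y^{f}}, B_{z^{f}}$, is the main obstacle. The safest route is to prove everything first for $x, y, z \in X_{+}^{u}$ (finite elements), where Lemma~\ref{L1} and the order continuity of multiplication make the cancellations rigorous, and then lift to $X_{+}^{s}$ by approximating with increasing nets $x_\alpha \uparrow x$, $y_\alpha \uparrow y$, $z_\alpha \uparrow z$ and invoking Lemma~\ref{YY2-q} and Proposition~\ref{YY2-r} to pass the factorization to the limit, handling the infinite parts by the band reduction described above.
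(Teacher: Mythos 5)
Your toolbox (partial inverses plus a band reduction) is the right one, but three steps in your plan are genuinely broken. First, on the all-finite band your verification of the key bound $b\le z$ runs backwards: from $a\le xz^{\ast}$ you deduce $az\le x$ and claim this gives $b=a^{\ast}x\le z$ ``after cancelling $a^{\ast}a$ against the $az$ term''; in fact $az\le x$ only yields the \emph{lower} bound $P_{a}z=a^{\ast}az\le a^{\ast}x=b$, which is useless for the goal. What you need is the opposite inequality $P_{a}x\le az$, and the way to get it is the equality $az=\left(y\wedge xz^{\ast}\right)z=yz\wedge xz^{\ast}z=yz\wedge x=x$, which consumes the hypothesis $x\le yz$, the fact $P_{z}x=x$ (because $B_{x}\subseteq B_{yz}=B_{y}\cap B_{z}$ by Remark \ref{YY2-l}), and distributivity of multiplication over $\wedge$ --- a point that itself needs care, since by Remark \ref{YY2-j} the infimum version of such distributivity fails in general and must here be justified by Lemma \ref{YY2-E}.(i) using that $z$ is finite on the band in question. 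Once $az=x$ is established, your candidate does work on the finite band: $b=a^{\ast}x=P_{a}z\le z$ and $ab=P_{a}x=x$ because $B_{x}=B_{az}\subseteq B_{a}$. But as written the step is wrong, not merely incomplete.

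Second, the infinite bands are not disposed of by your sketch. Your suggestion to ``push the whole mass of $x$ into one factor (take $a=x\wedge y$ \dots)'' fails already in $X=\mathbb{R}$ with $x=1$, $y=\infty$, $z=1/2$: then $a=x\wedge y=1$ forces $b=1\not\le z$, whereas the correct factor is $a=xz^{\ast}=2$, $b=z$. Moreover your formula collapses when $x$ itself is infinite (take $x=y=\infty$, $z=e$: then $a=y\wedge xz^{\ast}=\infty$, so $a^{\ast}=0$ and $ab=0\neq x$), so the infinite bands genuinely require the separate argument you only gesture at. Third --- and this is the most serious gap --- the proposed lift from $X_{+}^{u}$ to $X_{+}^{s}$ by increasing nets has no mechanism behind it: if $x_{\alpha}=a_{\alpha}b_{\alpha}$ with $a_{\alpha}\le y_{\alpha}$ and $b_{\alpha}\le z_{\alpha}$, nothing forces the factor nets $\left(a_{\alpha}\right)$, $\left(b_{\alpha}\right)$ to converge or even to be monotone, and Lemma \ref{YY2-q} and Proposition \ref{YY2-r} concern partial inverses of monotone nets, not limits of factorizations. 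The paper needs no limits at all: it splits by $P=P_{y^{f}}$ and writes explicit factors on each piece --- when $y$ is finite, $a=y$ and $b=y^{\ast}x$ (so $ab=P_{y}x=x$ and $b\le y^{\ast}yz\le z$); when $y$ is infinite, $a=x\left(z^{\ast}+e_{z^{\infty}}\right)$ and $b=z^{f}+e_{z^{\infty}}$ --- and then adds the two disjoint factorizations. If you repair the finite-band computation as above and replace the limiting argument by such an explicit case analysis on $B_{y^{f}}$ and $B_{y^{\infty}}$, your proof goes through; as it stands, it does not.
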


\begin{proof}
By restricting ourselves to the band $B_{y+z}$ we may assume that $X$ has a
weak unit $e>0.$ Assume first that $y$ is finite. Then $x=y.y^{\ast}x$ is a
suitable decomposition. Indeed as $x\leq yz$ we have $x\in B_{y},$ and so
$P_{y}x=x.$ Moreover $y^{\ast}x\leq y^{\ast}yz\leq z.$ If $y$ is infinite,
then a suitable decomposition could be $x=x\left(  z^{\ast}+e_{z^{\infty}%
}\right)  .\left(  z^{f}+e_{z^{\infty}}\right)  .$

General case. Write $x=Px+P^{d}x$ where $P=P_{y^{f}},$ and observe that
$Px\leq Pyz=y^{f}Pz$ and $P^{d}x\leq y^{\infty}P^{d}z.$ Using the previous two
cases we can write $Px=ab$ and $P^{d}x=a^{d}b^{d},$ with $a,b\in B_{y^{f}}%
^{s},$ $a^{d},b^{d}\in\left(  B_{y^{f}}^{d}\right)  ^{s}$ with $0\leq a\leq
Py,$ $0\leq b\leq Pz,$ $0\leq a^{d}\leq P^{d}y,$ and $0\leq b^{d}\leq P^{d}z.$
It is easily seen now that $x=\left(  a+a^{d}\right)  .\left(  b+b^{d}\right)
$ is a suitable decomposition of $x$.
\end{proof}

\section{Applications\label{MM}}

In this section, we consider a Riesz conditional triple $\left(  X,e,T\right)
$ , unless explicitely stated otherwise. Here $X$ is a Dedekind complete Riesz
space with order weak unit $e,$ and $T$ a conditional expectation operator on
$X$ with $Te=e.$

\begin{definition}
Let $X$ be an Archimedean Riesz space with weak order unit $e.$ Let $M=\left(
m_{ij}\right)  _{1\leq i,j\leq n}$ be an $n$-matrix with entries in $X.$ We
say that $M$ is positive semi-definite if $M$ is symmetric and $x^{T}Mx\in
X_{+}$ for every $x\in\left(  X_{e}\right)  ^{n},$ where $X_{e}$ is the ideal
generated by $e$ and $x^{T}=\left(  x_{1},....,x_{n}\right)  $ is the
transpose of $x.$ This means that $\sum\limits_{i,j}m_{i,j}x_{i}x_{j}\geq0$
for all $x_{1},...,x_{n}\in X_{e}.$
\end{definition}

\begin{remark}
It is clear that if $M$ is a positive semi-definite matrix then $x^{T}Mx\in
X_{+}$ for every $x\in\left(  X\right)  ^{n}.$ Moreover, in order to prove
that a matrix $M=\left(  m_{i,j}\right)  $ is positive semi-definite it is
enough to check the positivity of $x^{T}Mx$ for elements $x$ in $\left(
\mathcal{R}\left(  T\right)  \right)  ^{n}.$ Indeed if $x\in X_{e}^{n}$ and
$V$ the $f$-subalgebra of $X^{u}$ generated by the set of the coefficients of
$M$ and of $x.$ Then for every $\omega\in H_{m}\left(  V\right)  $ and
$\lambda=\left(  \omega\left(  x_{j}\right)  e,...,\omega\left(  x_{j}\right)
e\right)  ^{T}$ we have
\[
\omega\left(  x^{T}Mx\right)  =\omega\left(  \lambda^{T}M\lambda\right)
\geq0,
\]
as $\lambda\in\mathcal{R}\left(  T\right)  ^{n}$ and $\omega$ is positive.
Thus $M$ is positive semi-definte.
\end{remark}

For a matrix $M$ in $\mathbf{M}_{n}\left(  X\right)  $ let $\Gamma\left(
M\right)  $ denote the sum of all entries of $M,$ that is, $\Gamma\left(
M\right)  =\sum\limits_{1\leq i,j\leq n}m_{ij}.$

\begin{lemma}
\label{YY2-g}Let $\left(  x_{n}\right)  _{n\in\mathbb{N}}$ be a positive
sequence in $X_{+},$ where $X$ is a Dedekind complete Riesz space with weak
order unit $e$, and let $R_{n}=\sum\limits_{k=n}^{\infty}x_{k}\in X_{+}^{s},$
for $n\in\mathbb{N}.$ Then $R_{1}^{\infty}=\left(  \inf R_{n}\right)
^{\infty}.$
\end{lemma}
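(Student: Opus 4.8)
The plan is to prove the two inequalities $\left(  \inf_{n}R_{n}\right)  ^{\infty}\leq R_{1}^{\infty}$ and $R_{1}^{\infty}\leq\left(  \inf_{n}R_{n}\right)  ^{\infty}$ separately. First note that the sequence $\left(  R_{n}\right)  $ is decreasing, since $R_{n}=x_{n}+R_{n+1}\geq R_{n+1}$, and bounded below by $0$, so $\inf_{n}R_{n}$ exists in $X_{+}^{s}$. The first inequality is then immediate: from $\inf_{n}R_{n}\leq R_{1}$ and the monotonicity of the map $y\longmapsto y^{\infty}$ on $X_{+}^{s}$ recorded at the beginning of Section 3, we get $\left(  \inf_{n}R_{n}\right)  ^{\infty}\leq R_{1}^{\infty}$.

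The crux is the reverse inequality, and it rests on the observation that all tails $R_{n}$ share the same infinite part. Writing $S_{n-1}=\sum_{k=1}^{n-1}x_{k}\in X_{+}$, we have $R_{1}=S_{n-1}+R_{n}$. Since $S_{n-1}$ is a finite element, its infinite part vanishes, so Lemma \ref{YY2-H}(i) gives $R_{1}^{\infty}=S_{n-1}^{\infty}+R_{n}^{\infty}=R_{n}^{\infty}$ for every $n$. I expect this to be the main point of the argument: it is the only place where the finiteness of the partial sums, equivalently that consecutive tails differ by an element of $X_{+}$, is used. The identity $R_{n}^{\infty}=R_{1}^{\infty}$ is exactly what forces the infinite part to survive passage to the infimum, even though the finite-part map is not monotone.

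With this in hand, the decomposition $R_{n}=R_{n}^{f}+R_{n}^{\infty}=R_{n}^{f}+R_{1}^{\infty}$ together with $R_{n}^{f}\geq0$ gives $R_{n}\geq R_{1}^{\infty}$ for every $n$. Hence $R_{1}^{\infty}$ is a lower bound for the family $\left(  R_{n}\right)  $, so $\inf_{n}R_{n}\geq R_{1}^{\infty}$. Applying monotonicity of the infinite-part map once more, and using that $R_{1}^{\infty}$ is already infinite so that $\left(  R_{1}^{\infty}\right)  ^{\infty}=R_{1}^{\infty}$, we obtain $\left(  \inf_{n}R_{n}\right)  ^{\infty}\geq\left(  R_{1}^{\infty}\right)  ^{\infty}=R_{1}^{\infty}$. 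Combining the two inequalities yields $R_{1}^{\infty}=\left(  \inf_{n}R_{n}\right)  ^{\infty}$, as desired.
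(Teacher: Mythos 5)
Your proof is correct and follows essentially the same route as the paper: the heart of both arguments is the identity $R_{1}^{\infty}=R_{n}^{\infty}$ obtained from the additivity of infinite parts applied to $R_{1}=S_{n-1}+R_{n}$ with $S_{n-1}\in X_{+}$. The only difference is cosmetic: where the paper finishes by taking $u\in B_{R_{1}^{\infty}}^{+}$, noting $tu\leq\inf_{n}R_{n}$ for all $t\geq0$, and deducing the band inclusion $B_{R_{1}^{\infty}}\subseteq B_{\left(\inf_{n}R_{n}\right)^{\infty}}$, you reach the same conclusion more directly from $\inf_{n}R_{n}\geq R_{1}^{\infty}$ via monotonicity of $y\longmapsto y^{\infty}$ and the identity $\left(R_{1}^{\infty}\right)^{\infty}=R_{1}^{\infty}$, which is a perfectly valid shortcut.
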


\begin{proof}
Clearly $R_{1}^{\infty}\geq\left(  \inf R_{n}\right)  ^{\infty}$ and by
\cite[Proposition 21]{L-900},
\[
R_{1}^{\infty}=R_{n}^{\infty}+\left(  \sum\limits_{k=1}^{n-1}x_{k}\right)
^{\infty}=R_{n}^{\infty}.
\]
Assume that $u\in B_{R_{1}^{\infty}}^{+}.$ Then $R_{n}\geq tu$ for all real
$t\geq0.$ So $\inf\limits_{n}R_{n}\geq tu$ for all real $t\geq0.$ This shows
that $u\in B_{\left(  \inf\limits_{n}R_{n}\right)  ^{\infty}}.$ We deduce from
this that $B_{R_{1}^{\infty}}\subseteq B_{\left(  \inf\limits_{n}R_{n}\right)
^{\infty}}$ and so $\left(  \inf\limits_{n}R_{n}\right)  ^{\infty}\geq
R_{1}^{\infty}.$
\end{proof}

We aim to generalize to the setting of Riesz space the following result due to
Feng and Shen \cite{a-1843}.

\begin{theorem}
Let $\left(  \Omega,\mathcal{F},\mathbb{P}\right)  $ be a probability space,
$\left(  A_{n}\right)  $ a sequence of events and $\left(  w_{n}\right)  $ a
sequence of positive reals. If $\sum\limits_{n=1}^{\infty}w_{n}\mathbb{P}%
(A_{n})=\infty$, then%
\[
\mathbb{P}(\lim\sup A_{n})\geq\limsup\limits_{n}\frac{\left(  \sum_{i=1}%
^{n}w_{i}\mathbb{P}(A_{i})\right)  ^{2}}{\sum_{i=1}^{n}\sum_{j=1}^{n}%
w_{i}w_{j}\mathbb{P}(A_{i}A_{j})}.
\]

\end{theorem}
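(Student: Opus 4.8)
The plan is to run the classical second-moment (Kochen--Stone) argument, since the statement as worded lives entirely in the scalar probability space $(\Omega,\mathcal{F},\mathbb{P})$ and involves no Riesz-space machinery. For indices $m\le n$ I would introduce the nonnegative random variable
\[
S_{m,n}=\sum_{i=m}^{n}w_{i}\mathbf{1}_{A_{i}},
\]
whose first two moments are immediate: $\mathbb{E}[S_{m,n}]=\sum_{i=m}^{n}w_{i}\mathbb{P}(A_{i})$ and, since $\mathbf{1}_{A_{i}}\mathbf{1}_{A_{j}}=\mathbf{1}_{A_{i}\cap A_{j}}$, $\mathbb{E}[S_{m,n}^{2}]=\sum_{i,j=m}^{n}w_{i}w_{j}\mathbb{P}(A_{i}A_{j})$. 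The engine of the proof is the Paley--Zygmund/Cauchy--Schwarz bound for a nonnegative variable: writing $\mathbb{E}[S_{m,n}]=\mathbb{E}[S_{m,n}\mathbf{1}_{\{S_{m,n}>0\}}]$ and applying Cauchy--Schwarz gives $(\mathbb{E}[S_{m,n}])^{2}\le\mathbb{E}[S_{m,n}^{2}]\,\mathbb{P}(S_{m,n}>0)$, whence (using $w_{i}>0$, so that $\{S_{m,n}>0\}=\bigcup_{i=m}^{n}A_{i}$)
\[
\mathbb{P}\!\left({\textstyle\bigcup_{i=m}^{n}A_{i}}\right)=\mathbb{P}(S_{m,n}>0)\ge\frac{\left(\sum_{i=m}^{n}w_{i}\mathbb{P}(A_{i})\right)^{2}}{\sum_{i,j=m}^{n}w_{i}w_{j}\mathbb{P}(A_{i}A_{j})}.
\]

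Next I would take the limits in $n$ and then in $m$, exploiting that $\mathbb{P}$ is finite. Since $\bigcup_{i=m}^{n}A_{i}\uparrow\bigcup_{i=m}^{\infty}A_{i}$ as $n\to\infty$, continuity from below gives $\mathbb{P}(\bigcup_{i=m}^{\infty}A_{i})\ge\limsup_{n}\frac{(\sum_{i=m}^{n}w_{i}\mathbb{P}(A_{i}))^{2}}{\sum_{i,j=m}^{n}w_{i}w_{j}\mathbb{P}(A_{i}A_{j})}$. Because $\limsup_{k}A_{k}=\bigcap_{m}\bigcup_{i=m}^{\infty}A_{i}$ is a decreasing intersection of finite-measure sets, continuity from above yields $\mathbb{P}(\limsup_{k}A_{k})=\inf_{m}\mathbb{P}(\bigcup_{i=m}^{\infty}A_{i})$, so
\[
\mathbb{P}(\limsup_{k}A_{k})\ \ge\ \inf_{m}\ \limsup_{n}\ \frac{\left(\sum_{i=m}^{n}w_{i}\mathbb{P}(A_{i})\right)^{2}}{\sum_{i,j=m}^{n}w_{i}w_{j}\mathbb{P}(A_{i}A_{j})}.
\]

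It remains to replace the tail sums (starting at $m$) by the full sums (starting at $1$) appearing in the statement; this is where the divergence hypothesis enters, and I expect it to be the only delicate point. Writing $a_{n}=\sum_{i=1}^{n}w_{i}\mathbb{P}(A_{i})$ and $b_{n}=\sum_{i,j=1}^{n}w_{i}w_{j}\mathbb{P}(A_{i}A_{j})$, for fixed $m$ one has $\sum_{i=m}^{n}w_{i}\mathbb{P}(A_{i})=a_{n}-a_{m-1}$, while $\sum_{i,j=m}^{n}w_{i}w_{j}\mathbb{P}(A_{i}A_{j})\le b_{n}$ as a subsum of nonnegative terms; hence the inner quantity is at least $\frac{(a_{n}-a_{m-1})^{2}}{b_{n}}=\frac{a_{n}^{2}}{b_{n}}\bigl(1-a_{m-1}/a_{n}\bigr)^{2}$. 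Since $\sum_{i}w_{i}\mathbb{P}(A_{i})=\infty$ forces $a_{n}\to\infty$, the factor $(1-a_{m-1}/a_{n})^{2}\to1$ for each fixed $m$, and multiplying a nonnegative sequence by a factor tending to $1$ leaves its $\limsup$ unchanged, so $\limsup_{n}\frac{(a_{n}-a_{m-1})^{2}}{b_{n}}=\limsup_{n}\frac{a_{n}^{2}}{b_{n}}$ for every $m$; taking the infimum over $m$ the bound becomes independent of $m$, giving
\[
\mathbb{P}(\limsup_{k}A_{k})\ \ge\ \limsup_{n}\ \frac{\left(\sum_{i=1}^{n}w_{i}\mathbb{P}(A_{i})\right)^{2}}{\sum_{i,j=1}^{n}w_{i}w_{j}\mathbb{P}(A_{i}A_{j})},
\]
which is the claim. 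The main obstacle is precisely this tail-to-full reduction: one must verify that discarding the first $m-1$ indices in the numerator is asymptotically negligible (guaranteed by the divergence of $\sum w_{i}\mathbb{P}(A_{i})$), while enlarging the denominator from the tail double-sum to the full double-sum only weakens the inequality in the favorable direction, so that the two estimates chain correctly.
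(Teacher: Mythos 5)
Your proof is correct, and it is the classical second-moment (Kochen--Stone) argument: the Cauchy--Schwarz bound $(\mathbb{E}[S_{m,n}])^{2}\le\mathbb{E}[S_{m,n}^{2}]\,\mathbb{P}(S_{m,n}>0)$, continuity of $\mathbb{P}$ from below and above, and the tail-to-full reduction via $a_{n}\to\infty$ all chain correctly (your factor-tending-to-one lemma is legitimate, and here the ratio is in fact bounded by $1$, so even the degenerate cases are harmless). The paper, however, takes a genuinely different route: it never proves this scalar statement directly, but quotes it from Feng--Li--Shen as the result to be generalized, and recovers it only as the special case $X=L^{1}(\mathbb{P})$, $T=\mathbb{E}$, $Q_{i}$ the band projection induced by $\mathbf{1}_{A_{i}}$, $v_{i}=w_{i}e$, of Theorem \ref{M7}, whose proof runs through the sup-completion machinery. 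Your argument is the scalar shadow of that proof, almost step for step: your Cauchy--Schwarz step with $\mathbf{1}_{\{S_{m,n}>0\}}$ is the paper's inequality $K_{q,n}^{2}=\bigl(T\bigl(c\sum_{i=q}^{n}v_{i}Q_{i}e\bigr)\bigr)^{2}\le Tc\cdot S_{q,n}$ with the component $c=\bigvee_{i=q}^{n}Q_{v_{i}}e$ playing the role of the indicator of the union; your division by the second moment becomes multiplication by the partial inverse $S_{q,n}^{\ast}$, which is precisely why the paper needs the order-continuity results for the star map (Lemma \ref{YY2-q}, Proposition \ref{YY2-r}); your tail-to-full reduction, where divergence of $\sum w_{i}\mathbb{P}(A_{i})$ enters, corresponds to Proposition \ref{M6}(iv)--(v), with the hypothesis encoded via the infinite part $K_{1,\infty}^{\infty}$ and the band projection $P$; and your two continuity-of-measure passages become order limits in $X^{u}$. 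What your route buys is a short, self-contained proof of exactly the stated theorem with no structure theory at all; what the paper's route buys is the much more general conditional Riesz-space version (Theorem \ref{M7}), at the price of the finite/infinite-part decomposition, the star map, and the positive semi-definite matrix lemmas that your scalar argument can bypass.
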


Our inspiration for the proof derives from \cite{a-1843}. But the proof here
is more technical.

For a subalgebra $Y$ of $X^{u}$ we will use $H_{m}\left(  Y\right)  $ to
denote the set of all Riesz and algebra homomorphism from $Y$ to $\mathbb{R}.$
We recall that if $Y$ is generated by a countable set the $H_{m}\left(
Y\right)  $ separates points of $Y$ and we have, in particular, for $y\in Y,$
the equivalence%
\[
y\geq0\Leftrightarrow\varphi\left(  y\right)  \geq0\text{ for all }\varphi\in
H_{m}\left(  V\right)  .
\]

\begin{lemma}
\label{M1}Let $M=\left(  m_{ij}\right)  \in\mathbf{M}_{n}\left(  X^{u}\right)
$ be a positive semi-definite matrix, $V$ a unital subalgebra of $X^{u}$
containing the entries of $M$ and $\varphi:V\longrightarrow\mathbb{R}$ a
positive algebra homomorphism. Then the real matrix $\varphi\left(  M\right)
:=\left(  \varphi\left(  m_{ij}\right)  \right)  $ is positive semi-definite.
\end{lemma}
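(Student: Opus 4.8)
The plan is to reduce the statement about the matrix $\varphi(M)$ being positive semi-definite to the very definition of positive semi-definiteness of a real symmetric matrix, namely that $\lambda^{T}\varphi(M)\lambda\geq0$ for every real vector $\lambda=(\lambda_{1},\dots,\lambda_{n})^{T}\in\mathbb{R}^{n}$. First I would note that symmetry of $\varphi(M)$ is immediate: since $M$ is positive semi-definite it is symmetric, so $m_{ij}=m_{ji}$, and applying $\varphi$ gives $\varphi(m_{ij})=\varphi(m_{ji})$, whence $\varphi(M)$ is a real symmetric matrix. This disposes of the symmetry half of the definition with essentially no work.

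The core of the argument is to connect the scalars $\lambda_{k}\in\mathbb{R}$ with elements of the Riesz space $X_{e}$ on which the positive semi-definiteness of $M$ is tested. The key observation is that $\varphi$ is an \emph{algebra} homomorphism and, being a Riesz homomorphism into $\mathbb{R}$ with $V$ unital, satisfies $\varphi(e)=1$ (after noting $e\in V$ since $V$ is unital, and that $\varphi$ preserves the multiplicative unit). The main step is then to take an arbitrary $\lambda=(\lambda_{1},\dots,\lambda_{n})^{T}\in\mathbb{R}^{n}$ and form the vector $x=(\lambda_{1}e,\dots,\lambda_{n}e)^{T}\in(X_{e})^{n}$. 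Since $M$ is positive semi-definite we have $x^{T}Mx=\sum_{i,j}m_{ij}\lambda_{i}\lambda_{j}e^{2}=\sum_{i,j}\lambda_{i}\lambda_{j}m_{ij}\in X_{+}$, using $e^{2}=e$ in the $f$-algebra $X^{u}$. Applying the positive homomorphism $\varphi$ to this positive element and using multiplicativity and $\mathbb{R}$-linearity yields
\[
0\leq\varphi\!\left(\sum_{i,j}\lambda_{i}\lambda_{j}m_{ij}\right)=\sum_{i,j}\lambda_{i}\lambda_{j}\varphi(m_{ij})=\lambda^{T}\varphi(M)\lambda,
\]
which is exactly the required inequality.

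I expect the main subtlety to be bookkeeping around scalars versus elements of $X$: one must ensure that $\lambda_{i}e\in X_{e}$ (which holds since $X_{e}$ is the ideal generated by $e$ and hence contains all real multiples of $e$), that the products $m_{ij}(\lambda_{i}e)(\lambda_{j}e)$ simplify correctly via $e$ acting as the algebra identity, and that $\varphi$ carries real scalar multiples through as genuine real multiplication, i.e.\ $\varphi(\lambda e)=\lambda\varphi(e)=\lambda$. None of these steps is deep, but getting the identities $e^{2}=e$ and $\varphi(e)=1$ explicitly stated is what makes the passage from the vector-lattice positivity of $x^{T}Mx$ to the scalar positivity of $\lambda^{T}\varphi(M)\lambda$ rigorous. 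Once that translation is in place the conclusion that $\varphi(M)$ is positive semi-definite is immediate from the scalar definition.
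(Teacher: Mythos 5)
Your proposal is correct and follows exactly the paper's argument: the paper likewise takes $\lambda\in\mathbb{R}^{n}$, forms $x=(\lambda_{1}e,\dots,\lambda_{n}e)$, and observes that $\varphi\left(x^{T}Mx\right)=\lambda^{T}\varphi(M)\lambda\geq0$ by positivity of $\varphi$. Your version merely spells out the bookkeeping (symmetry, $e^{2}=e$, $\varphi(\lambda e)=\lambda$) that the paper compresses into ``an easy computation.''
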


\begin{proof}
Let $\lambda=\left(  \lambda_{1},...,\lambda_{n}\right)  \in\mathbb{R}^{n}$
and $x=\left(  \lambda_{1}e,\lambda_{2}e,...,\lambda_{n}e\right)  .$ An easy
computation gives $\varphi\left(  xMx^{T}\right)  =\lambda^{T}\varphi\left(
M\right)  \lambda,$ and the result follows.
\end{proof}

\begin{lemma}
\label{M2}Let $n,k_{1},...,k_{n}$ be integers, with $k=k_{1}+...+k_{n}$ and
$A_{i,j}\in\mathbf{M}_{k_{i},k_{j}}\left(  X_{+}\right)  .$

\begin{enumerate}
\item If $M=\left(  A_{ij}\right)  _{1\leq i,j\leq n}\in\mathbf{M}_{k}\left(
X\right)  $ is positive semi-definite then the matrix $S=\left(  \Gamma\left(
A_{ij}\right)  \right)  $ is so.

\item If $M=\left(  m_{ij}\right)  $ is positive semi-definite then $\det
M\geq0.$
\end{enumerate}
\end{lemma}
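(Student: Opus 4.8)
The plan is to handle the two assertions separately, reducing everything to the scalar case via the homomorphism technique already set up in Lemma \ref{M1}. The key observation is that both claims are classical facts about real positive semi-definite matrices; the only task is to transfer them to the Riesz-space setting using positive algebra homomorphisms to $\mathbb{R}$.

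For part 2, I would argue as follows. Let $V$ be the unital $f$-subalgebra of $X^{u}$ generated by the finitely many entries $m_{ij}$; this is generated by a countable (indeed finite) set, so by the remark preceding Lemma \ref{M1} the homomorphisms in $H_{m}(V)$ separate points, and in particular an element $y\in V$ is positive if and only if $\varphi(y)\geq 0$ for every $\varphi\in H_{m}(V)$. Since $\det M$ is a polynomial in the entries $m_{ij}$, it lies in $V$, and for each $\varphi\in H_{m}(V)$ we have $\varphi(\det M)=\det(\varphi(M))$ because $\varphi$ is an algebra homomorphism and the determinant is a polynomial expression. By Lemma \ref{M1} the real matrix $\varphi(M)$ is positive semi-definite, hence $\det(\varphi(M))\geq 0$ by the classical scalar fact. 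Therefore $\varphi(\det M)\geq 0$ for every $\varphi$, and the separation property forces $\det M\geq 0$.

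For part 1, I would proceed by the same homomorphism reduction. Enlarge $V$ to contain all the entries of all the blocks $A_{ij}$, and fix $\varphi\in H_{m}(V)$. Then $\varphi$ maps the block matrix $M$ to the real positive semi-definite matrix $\varphi(M)$, where the $(i,j)$ block becomes $\varphi(A_{ij})$. The crucial point is that $\varphi$ commutes with the $\Gamma$ operation: since $\Gamma(A_{ij})$ is just the sum of the entries of the block and $\varphi$ is additive, $\varphi(\Gamma(A_{ij}))=\Gamma(\varphi(A_{ij}))$, so $\varphi(S)$ is exactly the real matrix obtained from $\varphi(M)$ by summing each block. It then suffices to check the scalar statement: summing the entries of each block of a real positive semi-definite block matrix yields a real positive semi-definite matrix. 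This is verified by noting that if $u_i\in\mathbb{R}$ are given, the quadratic form $u^{T}\varphi(S)u=\sum_{i,j}u_i u_j\,\Gamma(\varphi(A_{ij}))$ equals $v^{T}\varphi(M)v$ for the vector $v$ obtained by repeating each $u_i$ across the $k_i$ coordinates of block $i$; nonnegativity of the latter is exactly positive semi-definiteness of $\varphi(M)$. Hence $\varphi(S)$ is positive semi-definite for every $\varphi$, and by separation $S$ is positive semi-definite.

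The main obstacle, such as it is, will be bookkeeping rather than any genuine difficulty: one must be careful that the subalgebra $V$ is chosen to contain all relevant entries (for part 2) or block entries (for part 1) so that the separation-of-points property applies, and one must confirm that the determinant and the $\Gamma$-summation are genuinely polynomial/additive expressions preserved by algebra homomorphisms. A subtler point worth stating explicitly is why a \emph{positive} homomorphism into $\mathbb{R}$ already suffices to detect positivity of a polynomial expression like $\det M$; this rests on the cited fact that $H_{m}(V)$ separates points when $V$ is countably generated, which guarantees that an element failing to be positive is detected by some $\varphi$. Everything else is the routine translation of two standard linear-algebra lemmas through these homomorphisms.
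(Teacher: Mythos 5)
Your part 2 is essentially the paper's proof: take $V$ generated by the entries of $M$ and $e$, apply Lemma \ref{M1} to get that $\varphi(M)$ is positive semi-definite, use $\varphi(\det M)=\det\varphi(M)\geq 0$, and conclude by separation of points. Nothing to add there.

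Part 1 is where you diverge from the paper, and the detour both costs you and introduces a gap. The paper proves part 1 directly in the Riesz space: for $x\in X^{n}$ one has the identity $x^{T}Sx=v^{T}Mv$, where $v=\left(x_{1},\ldots,x_{1},x_{2},\ldots,x_{2},\ldots,x_{n},\ldots,x_{n}\right)^{T}$ repeats each $x_{i}$ across the $k_{i}$ coordinates of block $i$, and positivity is then immediate from positive semi-definiteness of $M$. This is exactly the identity you verify --- but you verify it only after applying $\varphi$, at the scalar level. The identity is pure bookkeeping with sums and holds verbatim in $X$, so the homomorphism wrapper is unnecessary. Worse, as written your separation step does not go through: positive semi-definiteness of $S$ means $x^{T}Sx\geq 0$ for \emph{every} test vector $x\in\left(X_{e}\right)^{n}$, and for an arbitrary such $x$ the element $x^{T}Sx$ does not lie in your algebra $V$, which you enlarged only to contain the block entries; the separation property of $H_{m}(V)$ detects positivity only of elements of $V$. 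To repair this you must, for each fixed test vector $x$, pass to a countably generated algebra $V_{x}$ containing the entries of $M$ \emph{and} the entries $x_{1},\ldots,x_{n}$, apply Lemma \ref{M1} over $V_{x}$, run your scalar block-summing argument there, and separate in $V_{x}$. Your closing remark about choosing $V$ "to contain all relevant entries" gestures at this but names only the block entries, which is not enough. The repair is routine, but the paper's one-line computation avoids the issue entirely.
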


\begin{proof}
(i) Let $x\in X^{n},$ then $x^{T}Sx=v^{T}Mv\geq0$ with $v=\left(  x_{1}%
,x_{1},...,x_{_{1},}x_{2},...,x_{2},.....,x_{n},...,x_{n}\right)  ^{T}.$

(ii) Let $V$ be the subalgebra of $X^{u}$ generated by the entries of $M$ and
$e.$ For every $\varphi\in H_{m}\left(  V\right)  $ the matrix $\varphi\left(
M\right)  $ is positive semi-definite by Lemma \ref{M1}. So its determinant is
positive. This shows that
\[
\varphi\left(  \det M\right)  =\det\varphi\left(  M\right)  \geq0.
\]
\newline As this holds for every $\varphi\in H_{m}\left(  V\right)  $ we
deduce that $\det M\geq0.$
\end{proof}

\begin{lemma}
\label{M3}Given a partition of an $(m+n)\times(m+n)$ symmetric matrix
$M=(m_{i,j})\in\mathcal{M}_{m+n}(X):$%
\[
M=\left(
\begin{array}
[c]{cc}%
A & C\\
^{t}C & B
\end{array}
\right)  ,
\]
where $A\in\mathbf{M}_{m}(X),$ $B\in\mathbf{M}_{n}(X)$ and $C\in
\mathbf{M}_{m,n}(X).$ If $M$\ is positive semi-definite, then $\Gamma
(C)^{2}\leq\Gamma(A).\Gamma(B).$
\end{lemma}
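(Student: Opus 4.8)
The plan is to reduce the statement to the two parts of Lemma \ref{M2}, which together carry all the content. The essential observation is that, under the coarsest partition of $M$ into two diagonal blocks of sizes $m$ and $n$, the four blocks are exactly $A$, $C$, ${}^{t}C$ and $B$, so that applying the block-sum operator $\Gamma$ to each block produces a $2\times 2$ matrix whose entries are precisely the three quantities appearing in the claim.

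First I would invoke Lemma \ref{M2}(i) in its notation with $n=2$, $k_{1}=m$, $k_{2}=n$, and block decomposition $A_{11}=A$, $A_{12}=C$, $A_{21}={}^{t}C$, $A_{22}=B$. Since $M$ is positive semi-definite, this yields that the matrix
\[
S=\begin{pmatrix}\Gamma(A) & \Gamma(C)\\ \Gamma({}^{t}C) & \Gamma(B)\end{pmatrix}
\]
is positive semi-definite in $\mathbf{M}_{2}(X)$. Here I would use the elementary fact that transposing a matrix leaves the sum of its entries unchanged, so that $\Gamma({}^{t}C)=\Gamma(C)$; hence $S$ is symmetric with equal off-diagonal entries, as required by the definition of positive semi-definiteness.

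Then I would apply Lemma \ref{M2}(ii) to the $2\times 2$ positive semi-definite matrix $S$, obtaining $\det S\geq 0$. Computing the determinant in the $f$-algebra $X^{u}$ gives $\det S=\Gamma(A)\Gamma(B)-\Gamma(C)^{2}\geq 0$, which is exactly the desired inequality $\Gamma(C)^{2}\leq\Gamma(A)\Gamma(B)$.

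There is essentially no deep obstacle here: the whole weight of the argument has been packaged into Lemma \ref{M2}, leaving a two-line reduction. The only point meriting a moment of care is the hypothesis of Lemma \ref{M2}(i), which is phrased for blocks with entries in $X_{+}$, whereas $C$ (and the off-diagonal parts of $A$, $B$) need not be positive. I would resolve this by noting that the proof of that lemma rests on the purely formal identity $x^{T}Sx=v^{T}Mv$ together with the positive semi-definiteness of $M$, and never actually uses positivity of the block entries; it therefore applies verbatim to $A$, $C$, ${}^{t}C$, $B$. Everything else is the routine verification that the $\Gamma$-images of the blocks are the claimed scalars.
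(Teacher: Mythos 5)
Your proof is correct and follows essentially the same route as the paper's own (very terse) argument: apply Lemma \ref{M2}(1) to the two-block partition to conclude that the $2\times2$ matrix $\begin{pmatrix}\Gamma(A) & \Gamma(C)\\ \Gamma(C) & \Gamma(B)\end{pmatrix}$ is positive semi-definite, then use Lemma \ref{M2}(2) to get $\det S\geq0$, which is exactly $\Gamma(C)^{2}\leq\Gamma(A)\Gamma(B)$. Your side remark that the positivity hypothesis on the block entries in Lemma \ref{M2}(1) is never actually used (the identity $x^{T}Sx=v^{T}Mv$ needs no positivity) is a legitimate point of care that the paper itself passes over in silence.
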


\begin{proof}
This follows from Lemma \ref{M2} and the fact that $\Gamma(A).\Gamma
(B)-\Gamma(C)^{2}=\det\left(
\begin{array}
[c]{cc}%
\Gamma\left(  A\right)  & \Gamma\left(  C\right) \\
\Gamma\left(  C^{T}\right)  & \Gamma\left(  B\right)
\end{array}
\right)  .$
\end{proof}

\begin{lemma}
\label{M4}Let $\{Q_{i}\}_{i=1}^{n}$ be a sequence of band projections on $X$.
Then the matrix $M=(TQ_{i}Q_{j}e)_{1\leq i,j\leq n}$ is positive semi-definite.
\end{lemma}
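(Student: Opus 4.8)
The plan is to reduce the claim to the positivity of $T$ together with the averaging property of the conditional expectation. First observe that the band projections $Q_i$ pairwise commute, since $\mathcal{B}(X)$ is a Boolean algebra; hence $Q_iQ_je=Q_jQ_ie$ and $M$ is symmetric. By the reduction recorded in the remark following the definition of positive semi-definiteness, it therefore suffices to verify that $x^{T}Mx\geq 0$ for every $x=(x_1,\dots,x_n)\in\mathcal{R}(T)^n$, that is, to show
\[
\sum_{i,j} x_i x_j\, T(Q_iQ_je)\geq 0 .
\]

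Next I would move the scalars $x_i,x_j$ inside $T$. Since each $x_i$ lies in the range $\mathcal{R}(T)$, the averaging (module) property of the conditional expectation gives $T(x_i x_j\, Q_iQ_je)=x_i x_j\, T(Q_iQ_je)$, applying the identity $T(fg)=f\,T(g)$ once with $f=x_j$ and once with $f=x_i$. Summing and using the linearity of $T$,
\[
\sum_{i,j} x_i x_j\, T(Q_iQ_je)=T\Big(\sum_{i,j} x_i x_j\, Q_iQ_je\Big).
\]

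The decisive step is to recognize the argument of $T$ as a square in the $f$-algebra $X^u$. Writing $p_i=Q_ie$ for the component of $e$ associated with $Q_i$, one has $Q_i f=p_i f$ for every $f\in X^u$, so that $Q_iQ_je=p_ip_j$ and
\[
\sum_{i,j} x_i x_j\, Q_iQ_je=\sum_{i,j} x_i x_j\, p_i p_j=\Big(\sum_i x_i p_i\Big)^{2}\geq 0 .
\]
Since $T$ is a positive operator, $x^{T}Mx=T\big((\sum_i x_i p_i)^{2}\big)\geq 0$, which establishes the claim.

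The only genuine subtlety, and the step I would be most careful about, is the legitimacy of pulling the factors $x_i$ through $T$: this is exactly the averaging identity $T(fg)=f\,T(g)$ for $f\in\mathcal{R}(T)$, which is valid provided the products remain in the domain of $T$. This is ensured by taking, as the remark permits, $x_i\in\mathcal{R}(T)\cap X_e$, so that every product $x_i x_j\, Q_iQ_je$ lies in $X$; the identification of a band projection with multiplication by the corresponding component of $e$ in $X^u$ is the other standard fact being invoked.
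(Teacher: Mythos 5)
Your proof is correct and follows essentially the same route as the paper's: test $x^{T}Mx$ on elements of $\mathcal{R}(T)^{n}$, pull the factors $x_{i}x_{j}$ inside $T$ via the averaging property, and recognize the resulting argument as the square $\left(\sum_{i}x_{i}Q_{i}e\right)^{2}\geq0$, whence positivity of $T$ concludes. You merely make explicit two points the paper leaves tacit, namely the symmetry of $M$ from commuting band projections and the identification $Q_{i}Q_{j}e=(Q_{i}e)(Q_{j}e)$ in the $f$-algebra $X^{u}$, which is a welcome clarification rather than a deviation.
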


\begin{proof}
Let $u=(u_{1},u_{2},...,u_{n})\in\mathcal{R}(T)^{n}.$ Then using the average
property of $T$ we have%
\begin{align*}
uMu^{T}  &  =%
{\textstyle\sum\limits_{i,j}}
u_{i}u_{j}TQ_{i}Q_{j}e=%
{\textstyle\sum\limits_{i,j}}
T\left(  u_{i}u_{j}Q_{i}Q_{j}e\right) \\
&  =T%
{\textstyle\sum\limits_{i,j}}
u_{i}u_{j}Q_{i}Q_{j}e=T\left(
{\textstyle\sum\limits_{i=1}^{n}}
u_{i}Q_{i}e\right)  ^{2}\geq0.
\end{align*}
which proves the desired result.
\end{proof}

Consider now a sequence $(v_{n})$ in $\mathcal{R}(T)_{+}$\ and a sequence
$(Q_{n})$ in $\mathcal{B}(X).$ For any $1\leq q\leq n\leq\infty,$\ let us
define the following

$K_{q,n}:=%
{\textstyle\sum\limits_{i=q}^{n}}
v_{i}TQ_{i}e,$

$R_{q,n}:=%
{\textstyle\sum\limits_{i=q}^{n}}
v_{i}v_{1}TQ_{i}Q_{1}e,$

$S_{q,n}:=%
{\textstyle\sum\limits_{q\leq i,j\leq n}}
v_{i}v_{j}TQ_{i}Q_{j}e,$

$R_{q,n}\left(  j\right)  :=%
{\textstyle\sum\limits_{i=q}^{n}}
v_{j}v_{i}TQ_{i}Q_{j}e,$ for $1\leq j\leq n.$

These notations will be utilized in subsequent discussions, notably in our
main result, Theorem \ref{M7}.

\begin{lemma}
\label{M5}With the aforementioned notations, the following relationship holds true:

\begin{enumerate}
\item[(i)] If $1\leq q\leq n\leq\infty$ then $K_{q,n}^{2}\leq S_{q,n}$.

\item[(ii)] For all $1\leq p\leq q<\infty$ we have%
\[
S_{q,n}^{\ast}S_{p,n}\overset{o}{\longrightarrow}e_{S_{q,\infty}}+S_{q,\infty
}^{\ast}\left(  S_{p,q-1}+2%
{\textstyle\sum_{j=p}^{q-1}}
R_{q,\infty}^{f}\left(  j\right)  \right)  \text{ in }X^{u}\text{ as
}n\longrightarrow\infty
\]
In particular, if the finite part $S_{q,\infty}^{f}$ of $S_{q,\infty}$ is
null, we get
\[
S_{q,n}^{\ast}S_{p,n}\overset{o}{\longrightarrow}e_{S_{q,\infty}\text{ }%
}\text{as }n\longrightarrow\infty.
\]

\end{enumerate}
\end{lemma}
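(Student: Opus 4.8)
The plan is to establish part (i) directly by recognizing $K_{q,n}$ and $S_{q,n}$ as instances of the matrix quantities studied earlier. Setting $Q_i' = Q_i$ and applying Lemma \ref{M4}, the matrix $M=(TQ_iQ_je)_{q\le i,j\le n}$ is positive semi-definite; scaling rows and columns by the positive scalars $v_i\in\mathcal{R}(T)_+$ (equivalently, replacing the test vector $u$ by $(v_qu_q,\dots,v_nu_n)$) preserves positive semi-definiteness, so $(v_iv_jTQ_iQ_je)$ is positive semi-definite as well. Now partition the index block $\{q,\dots,n\}$ into the single distinguished structure needed to compare $K$ and $S$: taking $A$ to be the full block $(v_iv_jTQ_iQ_je)$ and comparing $\Gamma$-sums via Lemma \ref{M3} (with the $1\times 1$ ``all-ones'' contraction, i.e. Lemma \ref{M2}(i) applied to the vector of all ones), one reads off $\Gamma(M)=S_{q,n}$ and recognizes $K_{q,n}=\sum_i v_i TQ_ie$ as the contraction $\sum_i v_i T Q_i e$. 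The cleanest route is to apply Lemma \ref{M3} to the $2\times 2$-block decomposition with $B$ the $1\times 1$ block $Te=e$ and $C$ the column $(v_iTQ_ie)$, exploiting $TQ_i^2e=TQ_ie$; this yields $K_{q,n}^2=\Gamma(C)^2\le \Gamma(A)\Gamma(B)=S_{q,n}\cdot e=S_{q,n}$, which is exactly (i).

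For part (ii) the strategy is to decompose $S_{p,n}$ along the block structure induced by the split $\{p,\dots,n\}=\{p,\dots,q-1\}\cup\{q,\dots,n\}$. Writing out the double sum gives the identity
\[
S_{p,n}=S_{p,q-1}+2\sum_{j=p}^{q-1}R_{q,n}(j)+S_{q,n},
\]
where $S_{p,q-1}$ collects the cross terms with both indices below $q$, the middle term collects the symmetric cross terms (one index below $q$, one at or above), and $S_{q,n}$ is the tail block. Multiplying through by $S_{q,n}^\ast$ and taking the order limit as $n\to\infty$, the key is that $S_{q,n}\uparrow S_{q,\infty}$ in $X^s$, so by Lemma \ref{YY2-q} we have $S_{q,n}^\ast\overset{o}{\longrightarrow}S_{q,\infty}^\ast$ in $X^u$. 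The term $S_{q,n}^\ast S_{q,n}$ tends to $e_{S_{q,\infty}}=P_{S_{q,\infty}^f}e$ by the defining property of the star map, while the finite blocks $S_{p,q-1}$ and $R_{q,n}(j)$ converge to $S_{p,q-1}$ and $R_{q,\infty}(j)$ respectively; applying the partial inverse $S_{q,\infty}^\ast$ picks out their finite parts, giving the stated limit with $R_{q,\infty}^f(j)$.

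The delicate point, and the main obstacle, is the order-continuity of the products under the limit. The factors $S_{p,q-1}$, $R_{q,n}(j)$ are fixed or convergent in $X^u$, but $S_{q,n}$ is an increasing net with possibly infinite supremum in $X^s$, so one cannot naively multiply limits. The clean way is to first multiply by $S_{q,n}^\ast$ (which lives in $X^u$ and whose band is $B_{S_{q,n}^f}\subseteq B_{S_{q,\infty}}$) and only then pass to the limit, using Lemma \ref{YY2-q} for $S_{q,n}^\ast\to S_{q,\infty}^\ast$ together with order-continuity of multiplication in $X^u$ on order-bounded nets. One must verify that each product $S_{q,n}^\ast\,R_{q,n}(j)$ is order bounded (it is, being dominated via the Cauchy--Schwarz-type estimate of Lemma \ref{M3} applied blockwise) so that Lemma \ref{YY2-r} or direct order-continuity applies. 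Finally, the special case where $S_{q,\infty}^f=0$ forces $S_{q,\infty}=S_{q,\infty}^\infty$, whence $S_{q,\infty}^\ast=0$ and $e_{S_{q,\infty}}=e$ on the relevant band, collapsing the right-hand side to $e_{S_{q,\infty}}$ as claimed.
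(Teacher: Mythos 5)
Your treatment of part (i) is correct but takes a somewhat different route from the paper: the paper simply applies the conditional Cauchy--Schwarz/Lyapunov inequality $(Tx)^{2}\leq Tx^{2}$ (citing \cite{L-180}) to $x=\sum_{i=q}^{n}v_{i}Q_{i}e$, whereas you derive the same inequality from the matrix toolkit by bordering the weighted Gram matrix $\left(v_{i}v_{j}TQ_{i}Q_{j}e\right)$ with the column $\left(v_{i}TQ_{i}e\right)$ and corner $Te=e$, and then invoking Lemma \ref{M3}. This works, provided you observe that the bordered matrix is positive semi-definite by the same averaging computation as in Lemma \ref{M4} (Lemma \ref{M4} itself covers neither the weights $v_{i}$ nor the border); your remark about $TQ_{i}^{2}e=TQ_{i}e$ essentially supplies this. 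The advantage of your route is that it stays entirely inside the paper's own machinery rather than citing an external inequality.

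In part (ii) your skeleton --- the decomposition $S_{p,n}=S_{p,q-1}+S_{q,n}+2\sum_{j=p}^{q-1}R_{q,n}(j)$, Lemma \ref{YY2-q} for $S_{q,n}^{\ast}\overset{o}{\longrightarrow}S_{q,\infty}^{\ast}$, and Proposition \ref{YY2-r} for the cross terms --- is exactly the paper's, but two points are genuinely wrong or missing. First, you identify the limit of $S_{q,n}^{\ast}S_{q,n}$ as $e_{S_{q,\infty}}=P_{S_{q,\infty}^{f}}e$. This is false: $S_{q,n}^{\ast}S_{q,n}=P_{S_{q,n}}e$, and these components increase to $P_{S_{q,\infty}}e$, the component of $e$ on the band generated by \emph{all} of $S_{q,\infty}$, not merely on $B_{S_{q,\infty}^{f}}$; the bands $B_{S_{q,n}}$ also fill up the region where $S_{q,\infty}$ is infinite. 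Your identification would make this term $0$ in the special case $S_{q,\infty}^{f}=0$, contradicting both the statement and your own (correct) closing remark; what identifies the limit is the band-supremum argument, not ``the defining property of the star map.'' Second, your handling of the cross terms is not yet a proof: $R_{q,n}(j)$ does \emph{not} converge in $X^{u}$ in general (its supremum $R_{q,\infty}(j)$ may have a nonzero infinite part --- that is precisely why $R_{q,\infty}^{f}(j)$ appears in the statement), so ``direct order-continuity'' is unavailable. Proposition \ref{YY2-r} does apply, but not verbatim: its hypothesis is $x_{n}^{2}\leq y_{n}$, while what one actually has is $R_{q,n}(j)^{2}\leq v_{j}^{2}K_{q,n}^{2}\leq v_{j}^{2}S_{q,n}$. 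So one must apply \ref{YY2-r} to the pair $\left(R_{q,n}(j),\,v_{j}^{2}S_{q,n}\right)$ to get $\left(v_{j}^{2}S_{q,n}\right)^{\ast}R_{q,n}(j)\overset{o}{\longrightarrow}\left(v_{j}^{\ast}\right)^{2}S_{q,\infty}^{\ast}R_{q,\infty}^{f}(j)$, then multiply by $v_{j}^{2}$ and use that everything lies in the band $B_{v_{j}}$ to recover $S_{q,n}^{\ast}R_{q,n}(j)\overset{o}{\longrightarrow}S_{q,\infty}^{\ast}R_{q,\infty}^{f}(j)$; one also needs $R_{q,\infty}^{\infty}(j)\leq S_{q,\infty}^{\infty}\perp S_{q,\infty}^{\ast}$ to see that the star map kills the infinite part. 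These are the steps your plan leaves out, and they are the technical core of the paper's proof.
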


\begin{proof}
(i) Put $x=%
{\textstyle\sum\limits_{i=q}^{n}}
v_{i}Q_{i}e.$ Then by the averaging property of $T$ we have $Tx=%
{\textstyle\sum\limits_{i=q}^{n}}
T\left(  v_{i}Q_{i}e\right)  =%
{\textstyle\sum\limits_{i=q}^{n}}
v_{i}TQ_{i}e.$ Hence it follows from Cauchy-Schwarz Inequality or Lyapunov
inequality \cite{L-180} that%
\[
K_{q,n}^{2}=\left(  Tx\right)  ^{2}\leq Tx^{2}=S_{q,n},
\]

(ii) Write $S_{p,n}=S_{p,q-1}+S_{q,n}+2%
{\textstyle\sum\limits_{j=p}^{q-1}}
R_{q,n}\left(  j\right)  .$ According to Lemma \ref{YY2-q}, we have
$S_{q,n}^{\ast}\overset{o}{\longrightarrow}S_{q,\infty}^{\ast}$ in $X^{u}.$
Moreover, we have clearly%
\begin{equation}
S_{q,n}^{\ast}S_{q,n}=P_{S_{q,n}}e\uparrow P_{S_{q,\infty}}e, \label{A1}%
\end{equation}
and%
\begin{equation}
S_{q,n}^{\ast}S_{p,q-1}\overset{o}{\longrightarrow}S_{q,\infty}^{\ast
}S_{p,q-1}\ \ \text{in }X^{u}. \label{A2}%
\end{equation}
Observe on the other hand that%
\[
R_{q,n}^{2}\left(  j\right)  \leq v_{j}^{2}\left(
{\textstyle\sum\limits_{i=q}^{n}}
v_{i}TQ_{i}e\right)  ^{2}=v_{j}^{2}K_{q,n}^{2}\leq v_{j}^{2}S_{q,n}.
\]
It follows from Lemma \ref{YY2-r} that%
\[
\left(  v_{j}^{2}S_{q,n}\right)  ^{\ast}R_{q,n}\left(  j\right)  \overset
{o}{\longrightarrow}\left(  v_{j}^{\ast}\right)  ^{2}S_{q,\infty}^{\ast
}R_{q,\infty}^{f}\left(  j\right)  \text{ in }X^{u}.
\]
As $S_{q,\infty}^{\ast}R_{q,\infty}^{f}\left(  j\right)  $ belongs to the band
$B_{v_{j}}$ we deduce that%
\[
S_{q,n}^{\ast}R_{q,n}\left(  j\right)  =v_{j}^{2}\left(  v_{j}^{2}%
S_{q,n}\right)  ^{\ast}R_{q,n}\left(  j\right)  \overset{o}{\longrightarrow
}S_{q,\infty}^{\ast}R_{q,\infty}\left(  j\right)  \ \text{in }X^{u}.
\]
As $R_{q,\infty}^{\infty}\left(  j\right)  \leq S_{q,\infty}^{\infty}\perp
S_{q,\infty}^{\ast},$ we have%
\[
S_{q,\infty}^{\ast}R_{q,\infty}\left(  j\right)  =S_{q,\infty}^{\ast
}R_{q,\infty}^{f}\left(  j\right)  .
\]
Hence,%
\begin{equation}
S_{q,n}^{\ast}%
{\textstyle\sum_{j=p}^{q-1}}
R_{q,n}\left(  j\right)  \overset{o}{\longrightarrow}S_{q,\infty}^{\ast}%
{\textstyle\sum_{j=p}^{q-1}}
R_{q,\infty}^{f}\left(  j\right)  \text{ in }X^{u}. \label{A4}%
\end{equation}
The required result follows by combining (\ref{A1}), (\ref{A2}), and (\ref{A4}).
\end{proof}

\begin{lemma}
\label{YY2-i}Let $X$ be a Dedekind complete Riesz space with weak order unit
$e,$ and let $Y$ be a regular Dedekind complete Riesz subspace of $X$ with
$e\in Y.$ Then $x^{f},x^{\infty}\in Y^{s}$ for every element $x\in Y_{+}^{s}.$
In particular this can be applied to $Y=\mathcal{R}\left(  T\right)  ,$ the
range of a conditional expectation operator $T.$
\end{lemma}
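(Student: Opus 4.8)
The plan is to locate the single component of $e$ that carries the infinite part of $x$ and to show it already lives in $Y$; once that is done, both $x^{\infty}$ and $x^{f}$ are obtained from $x$ by band projections defined over $Y$. Since $x\in Y_{+}^{s}$, the construction of the sup-completion furnishes an increasing net $(y_{\alpha})$ in $Y_{+}$ with $y_{\alpha}\uparrow x$ in $Y^{s}$. Regularity of $Y$ in $X$, together with $e\in Y$, guarantees that the inclusion $Y\hookrightarrow X$ extends to a supremum-preserving cone embedding $Y^{s}\hookrightarrow X^{s}$ (cf. \cite{L-444}), so that $y_{\alpha}\uparrow x$ holds in $X^{s}$ as well, and suprema and infima of nets from $Y$ may be computed interchangeably in $Y$, in $X$, and in $X^{s}$.

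Write the decomposition $x=x^{f}+x^{\infty}$ in $X^{s}$, with $x^{\infty}=\infty_{B}$ for the band $B=B_{x^{\infty}}$ and $x^{f}\in B^{d}\cap X^{u}$, and let $p=P_{B}e$ be the corresponding component of $e$. The key computation is the identity
\[
p=\inf_{n}\Big(\sup_{\alpha}\big(e\wedge(y_{\alpha}-ne)^{+}\big)\Big).
\]
To verify it, note first that $\vee$ and translation by $-ne$ are supremum-preserving (Proposition \ref{YY2-k}.(i) together with the rules of Lemma \ref{YY2-A}), so $\sup_{\alpha}(y_{\alpha}-ne)^{+}=(x-ne)^{+}$; meeting with $e$ and using $e\wedge\infty_{B}=P_{B}e$ gives $\sup_{\alpha}\big(e\wedge(y_{\alpha}-ne)^{+}\big)=p+\big(P_{B}^{d}e\wedge(x^{f}-ne)^{+}\big)$. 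Since $x^{f}\in X^{u}$ is finite, the bracketed term decreases to $0$ as $n\to\infty$, leaving $\inf_{n}=p$.

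Now each $e\wedge(y_{\alpha}-ne)^{+}$ lies in $Y_{+}$ because $Y$ is a Riesz subspace containing $e$. The inner supremum is order bounded by $e\in Y$, hence exists in the Dedekind complete space $Y$ and, by regularity, coincides with the supremum computed in $X$; the outer infimum of the resulting decreasing sequence likewise stays in $Y$. Therefore $p\in Y$, and since $p$ is a component of $e$ it determines a projection band of $Y$. Finally, as $p\in Y$, the band projection $P_{B}$ determined by $p$ maps $Y^{s}$ into $Y^{s}$; and by Remark \ref{YY2-v} we have $x^{\infty}=P_{B}x$ and $x^{f}=P_{B}^{d}x$, so applying $P_{B}$ and $P_{B}^{d}$ to $x\in Y^{s}$ yields $x^{\infty},x^{f}\in Y^{s}$, as required. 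The case $Y=\mathcal{R}(T)$ is then covered because $\mathcal{R}(T)$ is a regular Dedekind complete Riesz subspace containing $e$.

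I expect the main obstacle to be the foundational compatibility statement in the first paragraph: that a regular Dedekind complete Riesz subspace sharing the weak unit induces a supremum-preserving embedding of sup-completions, under which the band projection attached to a component $p\in\mathcal{C}(e)\cap Y$ is the restriction of the corresponding band projection on $X^{s}$. Granting this, and invoking the isomorphism $\mathcal{C}(e)\cong\mathcal{B}(X)\cong\infty(X)$ of Proposition \ref{YY2-B}, the component-extraction computation is routine.
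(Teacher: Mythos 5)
Your proof is correct, and while it shares the paper's strategic first move --- get the component $P_{x^{\infty}}e$ of $e$ into $Y$ and then split $x$ --- the execution is genuinely different. The paper never introduces the approximating net explicitly: by \cite[Lemma 12]{L-900} it notes $x\wedge ne=x^{f}\wedge ne+x^{\infty}\wedge ne\in Y$, telescopes consecutive terms to get $x\wedge(n+1)e-x\wedge ne=x^{f}\wedge(n+1)e-x^{f}\wedge ne+P_{x^{\infty}}e$, lets $n\to\infty$ to capture $P_{x^{\infty}}e\in Y$, then recovers $x^{f}\wedge ne=x\wedge ne-nP_{x^{\infty}}e\in Y$, whence $x^{f}=\sup_{n}\left(x^{f}\wedge ne\right)\in Y^{s}$ and $x^{\infty}=x-x^{f}\in Y^{s}$. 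You instead prove the closed-form identity $p=\inf_{n}\sup_{\alpha}\left(e\wedge(y_{\alpha}-ne)^{+}\right)$ --- which checks out: $\sup_{\alpha}(y_{\alpha}-ne)^{+}=(x-ne)^{+}$ by Proposition \ref{YY2-k}.(i) and order continuity of the lattice operations, and splitting along $B=B_{x^{\infty}}$ gives $e\wedge(x-ne)^{+}=p+P_{B}^{d}e\wedge(x^{f}-ne)^{+}\downarrow p$ since $x^{f}\in X^{u}$ --- and then apply $P_{B}$ and $P_{B}^{d}$ to $x$. The paper's telescoping buys economy: it never needs your last step, the stability $P_{B}(Y^{s})\subseteq Y^{s}$, which you assert without proof; it is routine ($P_{B}y=\sup_{n}(y\wedge np)\in Y$ for $y\in Y_{+}$ by Dedekind completeness plus regularity, then pass to increasing nets via Remark \ref{YY2-j} with multiplier $p$, resp.\ $e-p$), but it should be spelled out. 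What your version buys is transparency: it makes explicit exactly where Dedekind completeness and regularity of $Y$ enter (the inner supremum and outer infimum staying in $Y$), which the paper leaves tacit in the bare assertion $x\wedge ne\in Y$. Finally, the foundational compatibility you flag --- that regularity plus $e\in Y$ yields a sup-preserving inclusion $Y^{s}\subseteq X^{s}$ --- is indeed not covered by \cite[Theorem 6]{L-444}, which treats bands only; but the paper's statement and proof presuppose exactly the same compatibility (already in writing $x\in Y_{+}^{s}$ and $x\wedge ne\in Y$), so this is a shared debt of the lemma rather than a gap specific to your argument.
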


\begin{proof}
\label{M8}Let $x$ be an element in $Y_{+}^{s}.$ By \cite[Lemma 12]{L-900} we
have%
\[
x\wedge ne=x^{f}\wedge ne+x^{\infty}\wedge ne\in Y.
\]
It follows that%
\[
x\wedge\left(  n+1\right)  e-x\wedge ne=x^{f}\wedge\left(  n+1\right)
e-x^{f}\wedge ne+P_{x^{\infty}}e\in Y.
\]
Taking the order limit we get $P_{x^{\infty}}e\in Y.$ So
\[
x^{f}\wedge ne=x\wedge ne-ne_{x^{\infty}}\in Y.
\]
Hence%
\[
x^{f}=\sup\limits_{n}\left(  x^{f}\wedge ne\right)  \in Y.
\]
Now we get $x^{\infty}=x-x^{f}\in Y^{s},$ as required.
\end{proof}

\textbf{Remark. }Let $\left(  x_{n}\right)  $ and $\left(  y_{n}\right)  $ be
two bounded sequences in $X_{+}^{u}.$ It is easy to see that if $x_{n}%
\overset{o}{\longrightarrow}x$ then
\[
\limsup\left(  x_{n}y_{n}\right)  =x\limsup y_{n}.
\]
This will be used in the next result.

\begin{proposition}
\label{M6}Using the same notations as previously and let $B$ be the band
generated by $K_{1,\infty}^{\infty}$ and $P$ the corresponding band
projection. Then the following assertions are valid.

\begin{enumerate}
\item[(i)] $R_{1,\infty}^{\infty}\leq K_{1,\infty}^{\infty}\leq S_{1,\infty
}^{\infty}.$

\item[(ii)] The sequences $\left(  K_{q,\infty}^{f}\right)  $ and $\left(
R_{q,\infty}^{f}\right)  $ are decreasing.

\item[(iii)] For all integer $q$ we have $K_{q,\infty}^{\infty}=K_{1,\infty
}^{\infty}$ and $R_{q,\infty}^{\infty}=R_{1,\infty}^{\infty}.$

\item[(iv)] For $V_{n}:=P_{K_{q,n}}S_{1,n}^{\ast}K_{1,n}^{2}$ we have:%
\begin{align*}
\limsup\limits_{n}V_{n}  &  =P_{K_{q,\infty}}\limsup\limits_{n}\left(
S_{1,n}^{\ast}K_{1,n}^{2}\right) \\
&  =\left(  P_{K_{q,\infty}^{f}}S_{q,\infty}^{\ast}S_{1,q-1}+P_{K_{q,\infty}%
}e+2P_{K_{q,\infty}^{f}}S_{q,\infty}^{\ast}%
{\textstyle\sum_{j=1}^{q-1}}
R_{q,\infty}^{f}\left(  j\right)  \right)  ^{\ast}\\
&  \left(  K_{1,q-1}K_{q,\infty}^{\ast}+P_{K_{q,\infty}}e\right)  ^{2}%
\limsup\limits_{n}\left[  S_{q,n}^{\ast}K_{q,n}^{2}\right]  .
\end{align*}

\item[(v)] We have the following equality%
\[
\limsup\limits_{n}\left(  PS_{1,n}^{\ast}K_{1,n}^{2}\right)  =\limsup
\limits_{n}\left(  PS_{q,n}^{\ast}K_{q,n}^{2}\right)  .
\]
In particular, if $B=X,$ then $\limsup\limits_{n}\left(  S_{q,n}^{\ast}%
K_{q,n}^{2}\right)  $ is independent of $q.$
\end{enumerate}
\end{proposition}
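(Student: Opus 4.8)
The plan is to prove the five assertions of Proposition \ref{M6} in sequence, since each later part relies on the structural facts established earlier. For part (i), I would argue directly from the definitions of $R_{1,\infty}$, $K_{1,\infty}$, and $S_{1,\infty}$ together with the comparison in Lemma \ref{M5}.(i), namely $K_{q,n}^{2}\leq S_{q,n}$. Since the map $x\longmapsto x^{\infty}$ is increasing on $X_{+}^{s}$ (as recalled in Remark \ref{YY2-o}), the inequalities between the finite-$n$ sums pass to their infinite parts; one must only be careful that squaring behaves well under the infinite-part map, which is controlled by Lemma \ref{YY2-H}.(iv) giving $(xy)^{\infty}=x^{f}y^{\infty}+x^{\infty}y^{f}+x^{\infty}y^{\infty}$. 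The middle inequality $R_{1,\infty}^{\infty}\leq K_{1,\infty}^{\infty}$ should follow from the termwise domination $v_{i}v_{1}TQ_{i}Q_{1}e\leq v_{i}v_{1}TQ_{i}e$ after noting $Q_{i}Q_{1}e\leq Q_{i}e$, absorbing the bounded factor $v_{1}$ into the band.

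For part (ii), the key is that $K_{q,\infty}^{f}$ and $R_{q,\infty}^{f}$ decrease in $q$. The tails $K_{q,\infty}$ and $R_{q,\infty}$ themselves decrease as $q$ grows (fewer terms), and the difference between consecutive tails is a single finite term in $X_{+}^{u}$. Then Remark \ref{YY2-o} applies verbatim: when $y=x+a$ with $a\in X_{+}^{u}$ and $x\leq y$, one has $x^{f}\leq y^{f}$. So $K_{q+1,\infty}=K_{q,\infty}-v_{q}TQ_{q}e$ with a finite difference yields $K_{q+1,\infty}^{f}\leq K_{q,\infty}^{f}$, and similarly for $R$. Part (iii) is a direct consequence of Lemma \ref{YY2-H}.(i): since $K_{1,\infty}=K_{1,q-1}+K_{q,\infty}$ with $K_{1,q-1}\in X_{+}^{u}$ finite, the infinite parts coincide, $K_{1,\infty}^{\infty}=K_{q,\infty}^{\infty}$; the same additivity of infinite parts gives the statement for $R$.

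The substantial work is in part (iv), and I expect this to be the main obstacle. The strategy is to decompose $S_{1,n}=S_{1,q-1}+S_{q,n}+2\sum_{j=1}^{q-1}R_{q,n}(j)$ and $K_{1,n}=K_{1,q-1}+K_{q,n}$ exactly as in the proof of Lemma \ref{M5}, then compute $\limsup_{n}(S_{1,n}^{\ast}K_{1,n}^{2})$ by pushing the star map and the square through the order limit. The delicate points are: first, controlling $S_{1,n}^{\ast}$ via the partial-inverse convergence from Lemma \ref{YY2-q} and the product-star identity $(xy)^{\ast}=x^{\ast}y^{\ast}$ from Proposition \ref{YY2-Jm}.(iv); second, handling the projection $P_{K_{q,n}}$ and its limit $P_{K_{q,\infty}}$, splitting onto the bands $B_{K_{q,\infty}^{f}}$ and $B_{K_{q,\infty}^{\infty}}$, where on the infinite band $S^{\ast}$ annihilates (by $\infty_{B}^{\ast}=0$, Proposition \ref{YY2-Jm}.(i)) leaving only the identity component $P_{K_{q,\infty}}e$. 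The bookkeeping of which terms survive after applying the star map---using that $R_{q,\infty}^{\infty}(j)\perp S_{q,\infty}^{\ast}$ exactly as exploited in Lemma \ref{M5}.(ii)---is where the computation becomes genuinely technical, since one must justify factoring the limit of a product as the product of limits, which requires the order-bounded multiplicative continuity results of Lemma \ref{YY2-t} and Lemma \ref{YY3-a} together with Proposition \ref{YY2-r}.

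Finally, part (v) should fall out of (iv) by applying the band projection $P=P_{B}$ onto $B=B_{K_{1,\infty}^{\infty}}$ and using part (iii) to identify $K_{1,\infty}^{\infty}=K_{q,\infty}^{\infty}$. On the band $B$, the element $K_{q,\infty}$ is infinite, so $P_{K_{q,\infty}}$ restricted to $B$ acts as $P$, and the finite-part contributions $P_{K_{q,\infty}^{f}}$ vanish under $P$; the messy prefactor in (iv) then collapses. The first factor reduces to $(P_{K_{q,\infty}}e)^{\ast}=P_{K_{q,\infty}}e$ and the second to $(P_{K_{q,\infty}}e)^{2}=P_{K_{q,\infty}}e$, so that $P\limsup_{n}(S_{1,n}^{\ast}K_{1,n}^{2})=P\limsup_{n}(S_{q,n}^{\ast}K_{q,n}^{2})$. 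The special case $B=X$ then says the right-hand $\limsup$ is the same for every $q$, since its $P$-projection equals a $q$-independent quantity and $P$ is the identity. I would take care throughout that all products are taken in $X_{+}^{s}$ so that Remark \ref{YY2-j} licenses moving fixed factors through suprema, and that the infimum-over-$\beta$ steps defining $\limsup$ invoke the band-containment hypotheses of Proposition \ref{YY2-E} wherever a factor is infinite.
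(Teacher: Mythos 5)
Your treatments of (i), (ii), (iii) and (v) are correct and coincide with the paper's own proof: (i) rests on $K_{1,n}^{2}\leq S_{1,n}$, the domination $R_{1,n}\leq v_{1}K_{1,n}$, monotonicity of $x\longmapsto x^{\infty}$ and the behaviour of infinite parts under products; (ii) is Remark \ref{YY2-o} applied to tails differing by a finite element; (iii) is additivity of infinite parts; and (v) follows by hitting the identity in (iv) with $P$, killing every factor supported in $B_{K_{q,\infty}^{f}}$ and using $(Pe)^{\ast}=Pe$ together with (iii). For (iv) you also choose the paper's route: split $S_{1,n}=S_{1,q-1}+S_{q,n}+2\sum_{j=1}^{q-1}R_{q,n}(j)$ and $K_{1,n}=K_{1,q-1}+K_{q,n}$, use $(xy)^{\ast}=x^{\ast}y^{\ast}$ to factor $V_{n}=\left(P_{K_{q,n}}S_{q,n}^{\ast}K_{q,n}^{2}\right)A_{n}M_{n}^{\ast}$ with $A_{n}=\left(K_{1,q-1}K_{q,n}^{\ast}+P_{K_{q,n}}e\right)^{2}$ and $M_{n}=P_{K_{q,n}}\bigl(S_{1,q-1}S_{q,n}^{\ast}+e+2S_{q,n}^{\ast}\sum_{j=1}^{q-1}R_{q,n}(j)\bigr)$, and then pass to the limit factor by factor.

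The gap is in that last passage. The factor $A_{n}$ is fine (here $K_{q,n}\uparrow K_{q,\infty}$, so Lemma \ref{YY2-q} applies), and $M_{n}\overset{o}{\longrightarrow}M$ follows from Lemma \ref{M5}.(ii). But you also need $M_{n}^{\ast}\overset{o}{\longrightarrow}M^{\ast}$, i.e.\ that the star map commutes with the order limit of the sequence $(M_{n})$, and $(M_{n})$ is \emph{not} monotone. None of the results you cite delivers this: Lemma \ref{YY2-q} is only for increasing nets, Proposition \ref{YY2-r} requires increasing sequences with $x_{\alpha}^{2}\leq y_{\alpha}$, and Lemmas \ref{YY2-t} and \ref{YY3-a} concern products of order-bounded sequences, not partial inverses. ``Pushing the star map through the order limit'' is exactly the step with no general license, and the paper spends most of its proof of (iv) on it: one first shows $(M_{n}^{\ast})$ is order bounded via the separate estimate $M_{n}^{\ast}\leq S_{q,n}S_{1,n}^{\ast}\leq e$ (an inequality your sketch never produces), then exploits the identity $M_{n}M_{n}^{\ast}=P_{M_{n}}e$ together with Lemma \ref{YY3-a} to obtain $M^{\ast}\leq\liminf M_{n}^{\ast}$ and $P_{M}\limsup M_{n}^{\ast}\leq M^{\ast}$, and finally removes $P_{M}$ by observing that all the $M_{n}^{\ast}$ lie in the band $B_{K_{q,\infty}}$, of which $M$ is a weak order unit. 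Without this argument (or a substitute for it), the displayed factorization of $\limsup_{n}V_{n}$ into the product of the three limits remains unjustified, and since (v) is deduced from (iv), that gap propagates.
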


The last two points in Lemma \ref{M6} provide a generalization of
\cite[Proposition 6]{a-1843}.

\begin{proof}
(i) By Lemma \ref{M5} $K_{1,n}^{2}\leq S_{1,n}$ and then $K_{1,\infty}^{2}\leq
S_{1,\infty},$ which yields $K_{1,\infty}^{\infty}=\left(  K_{1,\infty}%
^{2}\right)  ^{\infty}\leq S_{1,\infty}^{\infty}.$ For the second inequality
observe that%
\[
R_{1,n}=v_{1}%
{\textstyle\sum\limits_{k=1}^{n}}
v_{k}TQ_{1}Q_{k}e\leq v_{1}K_{1,n},
\]
and we deduce as above that $R_{1,\infty}^{\infty}\leq K_{1,\infty}^{\infty}.$

(ii) This follows from Remark \ref{YY2-o}.

(iii)\textbf{\ }This follows immediately from the equality $\left(
x+y\right)  ^{\infty}=x^{\infty}+y^{\infty}$ which holds for all elements
$x,y\in X_{+}^{s}.$ \cite[Proposition 21]{L-900}.

(iv)\ In view of Lemma \ref{M5}.(i) we have $S_{1,n}^{\ast}.K_{1,n}^{2}\leq
e.$ Hence the first equality follows from the remark preceding Proposition
\ref{M6} and the fact that $P_{K_{q,n}}e\uparrow P_{K_{q,\infty}}e$ as
$n\longrightarrow\infty$. For the second we will use the decomposition%
\[
V_{n}:=P_{K_{q,n}}S_{1,n}^{\ast}K_{1;n}^{2}=P_{K_{q,n}}S_{q,n}^{\ast}%
K_{q,n}^{2}.A_{n}.M_{n}^{\ast},
\]
where%
\[
A_{n}=\left(  K_{1,q-1}K_{q,n}^{\ast}+P_{K_{q,n}}e\right)  ^{2},
\]
and%
\[
M_{n}=P_{K_{q,n}}\left(  S_{1,q-1}S_{q,n}^{\ast}+e+2S_{q,n}^{\ast}%
{\textstyle\sum\limits_{j=1}^{q-1}}
R_{q,n}\left(  j\right)  \right)  .
\]
According to Lemma \ref{M5}, the inequality $K_{q,n}^{2}\leq S_{q,n}$ implies
that $K_{q,n}^{2}S_{q,n}^{\ast}\leq P_{S_{q,n}}e$. Hence the sequence $\left(
K_{q,n}^{2}S_{q,n}^{\ast}\right)  $\ is order bounded in $X^{u}.$ We have%
\[
P_{K_{q,n}}K_{1,n}^{2}=K_{q,n}^{2}\left(  K_{1,q-1}K_{q,n}^{\ast}+P_{K_{q,n}%
}e\right)  ^{2}=K_{q,n}^{2}A_{n},
\]
with%
\[
A_{n}=\left(  K_{1,q-1}K_{q,n}^{\ast}+P_{K_{q,n}}e\right)  ^{2}\overset
{o}{\longrightarrow}\left(  K_{1,q-1}K_{q,\infty}^{\ast}+P_{K_{q,\infty}%
}e\right)  ^{2}\text{ in }X^{u}.
\]
\ Next, we observe that%
\begin{align*}
P_{K_{q,n}}S_{1,n}^{\ast}  &  =P_{K_{q,n}}\left(  S_{1,q-1}+S_{q,n}+2%
{\textstyle\sum\limits_{j=1}^{q-1}}
R_{q,n}\left(  j\right)  \right)  ^{\ast}\\
&  =P_{K_{q,n}}S_{q,n}^{\ast}\left(  S_{q,n}^{\ast}S_{1,q-1}+P_{S_{q,n}%
}e+2S_{q,n}^{\ast}%
{\textstyle\sum\limits_{j=1}^{q-1}}
R_{q,n}\left(  j\right)  \right)  ^{\ast}\\
&  =P_{K_{q,n}}S_{_{q,n}}^{\ast}M_{n}^{\ast}.
\end{align*}
with%
\[
M_{n}=P_{K_{q,n}}\left(  S_{1,q-1}S_{q,n}^{\ast}+e+2S_{q,n}^{\ast}%
{\textstyle\sum\limits_{j=1}^{q-1}}
R_{q,n}\left(  j\right)  \right)  .
\]
Now in the proof of Lemma \ref{M5} it has been shown that
\[
S_{q,n}^{\ast}R_{q,n}\left(  j\right)  \overset{o}{\longrightarrow}%
S_{q,\infty}^{\ast}R_{q,\infty}\left(  j\right)  \text{ in }X^{u}.
\]
So we have%
\begin{align*}
M_{n}\overset{o}{\longrightarrow}M  &  :=P_{K_{q,\infty}}\left(
S_{1,q-1}S_{q,\infty}^{\ast}+e+2S_{q,\infty}^{\ast}%
{\textstyle\sum\limits_{j=1}^{q-1}}
R_{q,\infty}\left(  j\right)  \right)  \text{ in }X^{u},\\
&  =P_{K_{q,\infty}^{f}}S_{1,q-1}S_{q,\infty}^{\ast}+P_{K_{q,\infty}%
}e+2P_{K_{q,\infty}^{f}}S_{q,\infty}^{\ast}%
{\textstyle\sum\limits_{j=1}^{q-1}}
R_{q,\infty}^{f}\left(  j\right)  \text{ }%
\end{align*}
where we use in the last equality the fact that $R_{q,\infty}^{\infty}\left(
j\right)  \perp S_{q,\infty}^{\ast}$ ($1\leq j\leq q-1$) and that
$K_{q,\infty}^{\infty}\perp S_{q,\infty}^{\ast}$. This shows, in particular,
that $\left(  M_{n}\right)  $ is order bounded in $X^{u}.$ On the other hand
we have by definition of $M_{n}$ that $M_{n},M_{n}^{\ast}\in B_{K_{q,n}%
}\subset B_{S_{q,n}}$ and%
\[
S_{q,n}^{\ast}M_{n}^{\ast}=P_{K_{q,n}}S_{q,n}^{\ast}M_{n}^{\ast}=P_{K_{q,n}%
}S_{1,n}^{\ast}\leq S_{1,n}^{\ast}.
\]
Hence%
\[
M_{n}^{\ast}\leq S_{q,n}S_{1,n}^{\ast}=S_{1,n}S_{1,n}^{\ast}\leq e.
\]
This shows that $\left(  M_{n}^{\ast}\right)  $ is order bounded in $X^{u}.$
Now using Lemma \ref{YY3-a} we get%
\[
P_{M}e=P_{\liminf M_{n}}e\leq\liminf P_{M_{n}}e=\liminf\left(  M_{n}%
M_{n}^{\ast}\right)  =M\liminf M_{n}^{\ast}.
\]
Thus $P_{M}e\leq M\liminf M_{n}^{\ast}$ and then%
\begin{equation}
M^{\ast}\leq\liminf M_{n}^{\ast}. \label{M6-1}%
\end{equation}
Similarly we have%
\[
M\limsup M_{n}^{\ast}=\limsup\left(  M_{n}M_{n}^{\ast}\right)  =\limsup
e_{M_{n}}\leq e.
\]
Thus $P_{M}\limsup M_{n}^{\ast}\leq M^{\ast}.$ Now as $\left(  M_{n}^{\ast
}\right)  $ is contained in the band $B_{K_{q,\infty}}$ and $M$ is a weak
order unit of that band; the last inequality becomes%
\begin{equation}
\limsup M_{n}^{\ast}\leq M^{\ast}. \label{M6-2}%
\end{equation}
We deduce from (\ref{M6-1}) and (\ref{M6-2}) that $M_{n}^{\ast}\overset
{o}{\longrightarrow}M^{\ast}$\ in $X^{u}.$ Observe finally that each of the
three sequences $\left(  e_{K_{q,n}}S_{q,n}^{\ast}K_{q,n}^{2}\right)  ,$
$\left(  A_{n}\right)  $ and $\left(  M_{n}\right)  $ is bounded in $X^{u},$
which enables us to use Lemma \ref{YY3-a} and conclude that%
\begin{align*}
\limsup\limits_{n}\left(  P_{K_{q,n}}S_{1,n}^{\ast}K_{1,n}^{2}\right)   &
=\limsup\limits_{n}\left[  \left(  K_{q,n}^{2}S_{q,n}^{\ast}\right)
M_{n}^{\ast}A_{n}\right] \\
&  =\limsup\limits_{n}\left[  K_{q,n}^{2}S_{q,n}^{\ast}\right]  .\lim
M_{n}^{\ast}.\lim\limits_{n}A_{n}\\
&  =\limsup\limits_{n}\left[  K_{q,n}^{2}S_{q,n}^{\ast}\right]  .M^{\ast
}\left(  K_{1,q-1}\left(  K_{q,\infty}^{f}\right)  ^{\ast}+e_{K_{q,\infty}%
}\right)  ^{2}%
\end{align*}
in $X^{u}.$ This completes the proof of (iv).

(v) This is an immediate consequence of (iv) by applying $P$ to (iv) and using
the equality $K_{1,\infty}^{\infty}=K_{q,\infty}^{\infty}$ proved in (iii).
\end{proof}

We reach now the central result of this section.

\begin{theorem}
\label{M7}Let $\left(  X,e,T\right)  $ be a conditional Riesz
triple,\textbf{\ }$\left(  v_{n}\right)  $ a sequence in $\mathcal{R}(T)_{+}$
and $\left(  Q_{n}\right)  $ a sequence of band projections. Let $P$ be the
projection on the band generated by the infinite part of $%
{\textstyle\sum\limits_{i=1}^{\infty}}
v_{i}TQ_{i}e.$ Then ($Q_{v_{n}}=P_{v_{n}}Q_{n}e.$)%
\[
TP\limsup\limits_{n}Q_{v_{n}}e\geq\limsup\limits_{n}\left(  P\left(
S_{1,n}^{\ast}K_{1,n}^{2}\right)  \right)  .
\]
In particular, if $\left(
{\textstyle\sum\limits_{i=1}^{\infty}}
v_{i}TQ_{i}e\right)  ^{f}=0$ then%
\[
T\left(  \limsup\limits_{n}Q_{v_{n}}e\right)  \geq\limsup\limits_{n}\left(
S_{1,n}^{\ast}.K_{1,n}^{2}\right)  .
\]

\end{theorem}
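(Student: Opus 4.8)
The plan is to imitate the classical weighted Chung--Erd\H{o}s argument, replacing the second-moment/Cauchy--Schwarz step by its conditional version and then passing to tails. For fixed $q$ and $n\geq q$ set $x_{q,n}=\sum_{i=q}^{n}v_{i}Q_{i}e\in X_{+}$. Using the averaging property of $T$ (applied coordinatewise, since each $v_{i}\in\mathcal{R}(T)$) together with $(Q_{i}e)(Q_{j}e)=Q_{i}Q_{j}e$, I would first record
\[
Tx_{q,n}=K_{q,n},\qquad T\big(x_{q,n}^{2}\big)=S_{q,n},\qquad P_{x_{q,n}}e=\sup_{q\leq i\leq n}Q_{v_{i}}e,
\]
the last identity because $B_{v_{i}Q_{i}e}=B_{v_{i}}\cap B_{Q_{i}e}=B_{Q_{v_{i}}}$. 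Applying the conditional Cauchy--Schwarz (Lyapunov) inequality used in Lemma \ref{M5} to $x_{q,n}$ and the component $y=P_{x_{q,n}}e$, and noting $y^{2}=y$ and $x_{q,n}y=x_{q,n}$, gives $K_{q,n}^{2}\leq S_{q,n}\,T\big(\sup_{q\leq i\leq n}Q_{v_{i}}e\big)$. Multiplying by $S_{q,n}^{\ast}\geq 0$ and using $S_{q,n}^{\ast}S_{q,n}=P_{S_{q,n}}e\leq e$ yields the tail inequality
\[
S_{q,n}^{\ast}K_{q,n}^{2}\leq T\Big(\sup_{i\geq q}Q_{v_{i}}e\Big),\qquad n\geq q.
\]

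Next I would project this by the band projection $P$; since $P a\le Pc$ whenever $a\le c$ and the right side is constant in $n$, taking $\limsup_{n}$ gives $\limsup_{n}\big(P S_{q,n}^{\ast}K_{q,n}^{2}\big)\leq PT\big(\sup_{i\geq q}Q_{v_{i}}e\big)$. The decisive input here is Proposition \ref{M6}(v), which identifies $\limsup_{n}(P S_{q,n}^{\ast}K_{q,n}^{2})=\limsup_{n}(P S_{1,n}^{\ast}K_{1,n}^{2})$ for every $q$; this is exactly the Riesz-space incarnation of the fact that the Chung--Erd\H{o}s ratio is insensitive to the starting index. Hence $\limsup_{n}(P S_{1,n}^{\ast}K_{1,n}^{2})\leq PT\big(\sup_{i\geq q}Q_{v_{i}}e\big)$ for all $q$. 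Taking the infimum over $q$ and using order continuity of $P$ and $T$ together with $\sup_{i\geq q}Q_{v_{i}}e\downarrow\limsup_{n}Q_{v_{n}}e$, the right-hand side becomes $PT\limsup_{n}Q_{v_{n}}e$.

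The step I expect to be the main obstacle is converting $PT$ into the asserted $TP$, i.e. showing that $P$ commutes with $T$. I would obtain this from $Pe\in\mathcal{R}(T)$. Indeed, the averaging property gives $v_{i}TQ_{i}e=T(v_{i}Q_{i}e)$, so $K_{1,n}=T\big(\sum_{i=1}^{n}v_{i}Q_{i}e\big)\in\mathcal{R}(T)$ and therefore $K_{1,\infty}\in\mathcal{R}(T)^{s}$; Lemma \ref{YY2-i} then yields $Pe=P_{K_{1,\infty}^{\infty}}e\in\mathcal{R}(T)$. Realising $P$ as multiplication by the component $p=Pe$ in $X^{u}$, a further application of the averaging property ($T(pf)=p\,Tf$ for $p\in\mathcal{R}(T)$) gives $TPf=PTf$, so $PT\limsup_{n}Q_{v_{n}}e=TP\limsup_{n}Q_{v_{n}}e$. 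Combining this with the previous paragraph establishes the main inequality.

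For the particular case $K_{1,\infty}^{f}=0$ I would argue as follows. Then $K_{1,\infty}=\infty_{B}$, so $K_{1,n}\leq K_{1,\infty}$ forces $P^{d}K_{1,n}\leq P^{d}\infty_{B}=0$, hence $P^{d}(S_{1,n}^{\ast}K_{1,n}^{2})=S_{1,n}^{\ast}P^{d}K_{1,n}^{2}=0$ and $P S_{1,n}^{\ast}K_{1,n}^{2}=S_{1,n}^{\ast}K_{1,n}^{2}$; thus the right-hand side of the main inequality collapses to $\limsup_{n}S_{1,n}^{\ast}K_{1,n}^{2}$. Since $P\leq I$ gives $T\limsup_{n}Q_{v_{n}}e\geq TP\limsup_{n}Q_{v_{n}}e$, the stated form follows directly from the general case.
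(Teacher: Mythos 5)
Your proposal is correct and follows essentially the same route as the paper: the conditional Cauchy--Schwarz inequality applied to the partial sum against the component $\bigvee_{i=q}^{n}Q_{v_i}e$, multiplication by the partial inverse $S_{q,n}^{\ast}$, Proposition \ref{M6}(v) to remove the dependence on the tail index $q$, commutation of $P$ with $T$ via $Pe\in\mathcal{R}(T)$ (the paper cites Lemma \ref{YY2-i} together with a proposition from its predecessor paper, while you rederive it from the averaging property), and the same reduction of the particular case via $K_{1,n}\in B$. The only differences are cosmetic, such as applying $P$ after rather than before multiplying by $S_{q,n}^{\ast}$.
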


\begin{proof}
(i) Let $B$ be the band associated to $P.$ By Lemma \ref{YY2-i} $\infty_{B}%
\in\mathcal{R}\left(  T\right)  ^{s}$ and by \cite[Proposition 7]{L-900} we
get $P_{B}T=TP_{B}.$ Put $Q_{v_{i}}=Q_{i}P_{v_{i}}.$ Then $c:=%
{\textstyle\bigvee\limits_{i=q}^{n}}
Q_{v_{i}}e$ is a component of $e$ and it follows from Cauchy-Schwarz
inequality that%
\[
K_{q,n}^{2}=\left(  T\left(  c.%
{\textstyle\sum\limits_{i=q}^{n}}
v_{i}Q_{i}e\right)  \right)  ^{2}\leq Tc.S_{q,n}.
\]
So as $T$ commutes with $P,$ we obtain%
\[
PK_{q,n}^{2}\leq T\left(  Pc\right)  .P\left(  S_{q,n}\right)  =T%
{\textstyle\bigvee\limits_{i=q}^{n}}
PQ_{v_{n}}e.P\left(  S_{q,n}\right)  ,
\]
which implies that%
\begin{align*}
P\left(  K_{q,n}^{2}S_{q,n}^{\ast}\right)   &  =PK_{q,n}^{2}.PS_{q,n}^{\ast
}\leq T\left(
{\textstyle\bigvee\limits_{i=q}^{n}}
PQ_{v_{n}}e\right)  .PS_{q,n}.PS_{q,n}^{\ast}\\
&  =T\left(
{\textstyle\bigvee\limits_{i=q}^{n}}
PQ_{v_{n}}e\right)  ,
\end{align*}
\newline where the last equality holds because $T\left(
{\textstyle\bigvee\limits_{i=q}^{n}}
PQ_{v_{n}}e\right)  $ belongs to the band $B\cap B_{S_{q,n}}$ and
$PS_{q,n}.PS_{q,n}^{\ast}$ is the component of $e$ on this band. It follows
that%
\begin{align*}
a  &  :=T\left(  \limsup\limits_{n}PQ_{v_{n}}e\right)  =\lim_{q}\sup_{n\geq
q}T\left(
{\textstyle\bigvee\limits_{i=q}^{n}}
PQ_{v_{i}}e\right) \\
&  \geq\lim_{q}\limsup\limits_{n}P\left(  S_{q,n}^{\ast}.K_{q,n}^{2}\right)
=\limsup\limits_{n}P\left(  S_{1,n}^{\ast}.K_{1,n}^{2}\right)  ,
\end{align*}
which proves the desired inequality. Here the last equality follows from Lemma
\ref{M5}(v). In the case $\left(
{\textstyle\sum\limits_{i=1}^{\infty}}
v_{i}TQ_{i}e\right)  ^{f}=0$ we get\ $K_{1,n}\in B$ for every $n,$ and then
$P\left(  S_{1,n}^{\ast}K_{1,n}^{2}\right)  =S_{1,n}^{\ast}.K_{1,n}^{2}.$ It
follows from the first case that%
\[
T\left(  \limsup\limits_{n}Q_{v_{n}}e\right)  \geq\limsup\limits_{n}\left(
S_{1,n}^{\ast}.K_{1,n}^{2}\right)  .
\]
\newline This completes the proof.
\end{proof}

Applying Theorem \ref{M7} to $v_{n}=(Tq_{n})^{\ast}$ with $q_{i}=Q_{i}e,$ we
obtain the following result which is a generalization of \cite[Corollary
2]{a-1843}.

\begin{corollary}
\label{M10}Under the hypothesis of Theorem \ref{M7} with $q_{i}=Q_{i}e$\ we
have%
\[
T\left(  \limsup\limits_{n}P_{Tq_{n}}q_{n}\right)  \geq\limsup\limits_{n}%
\left(
{\textstyle\sum\limits_{1\leq i,j\leq n}}
\left(  Tq_{i}Tq_{j}\right)  ^{\ast}T\left(  q_{i}q_{j}\right)  \right)
^{\ast}.\left(
{\textstyle\sum\limits_{1\leq i\leq n}}
e_{Tq_{i}}\right)  ^{2}.
\]
Theorem \ref{M7} allows to get a generalization of Borel-Cantelli Lemma proved
by the author \cite[Theorem 31]{L-900}.
\end{corollary}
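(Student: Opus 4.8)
The plan is to obtain the corollary purely by substituting the specific choice $v_n=(Tq_n)^{\ast}$ (with $q_i=Q_ie$) into Theorem \ref{M7} and simplifying each of the three quantities $Q_{v_n}e$, $K_{1,n}$ and $S_{1,n}$ appearing there. The fact that makes every substitution collapse is that each $q_i$ is a component of $e$, so $0\le q_i\le e$ and hence $Tq_i\le Te=e$. In particular every $Tq_i$ is a finite element, so $(Tq_i)^{f}=Tq_i$ and $(Tq_i)^{\ast}$ is the genuine inverse of $Tq_i$ on the band $B_{Tq_i}$, with $(Tq_i)^{\ast}Tq_i=e_{Tq_i}=P_{Tq_i}e$.

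First I would rewrite the left-hand side. Since $v_n=(Tq_n)^{\ast}$ and $Tq_n$ is finite, the definition of the star map gives $B_{v_n}=B_{(Tq_n)^{\ast}}=B_{(Tq_n)^{f}}=B_{Tq_n}$, whence $P_{v_n}=P_{Tq_n}$. Therefore $Q_{v_n}e=P_{v_n}Q_ne=P_{Tq_n}q_n$, so the left member $T(\limsup_n Q_{v_n}e)$ of Theorem \ref{M7} becomes $T(\limsup_n P_{Tq_n}q_n)$, which is exactly the left-hand side of the corollary.

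Next I would simplify $K_{1,n}$ and $S_{1,n}$. For $K_{1,n}=\sum_{i=1}^{n}v_i\,TQ_ie=\sum_{i=1}^{n}(Tq_i)^{\ast}Tq_i$, the finiteness of $Tq_i$ yields $(Tq_i)^{\ast}Tq_i=e_{Tq_i}$, so $K_{1,n}=\sum_{i=1}^{n}e_{Tq_i}$ and $K_{1,n}^{2}=\bigl(\sum_{1\le i\le n}e_{Tq_i}\bigr)^{2}$, the second factor on the right. For $S_{1,n}=\sum_{1\le i,j\le n}v_iv_j\,TQ_iQ_je$ I would use two facts: multiplicativity of the star map from Proposition \ref{YY2-Jm}(iv), giving $v_iv_j=(Tq_i)^{\ast}(Tq_j)^{\ast}=(Tq_iTq_j)^{\ast}$; and the identity $Q_iQ_je=q_iq_j$, valid because products of components of $e$ coincide with their lattice infima. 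Hence $S_{1,n}=\sum_{1\le i,j\le n}(Tq_iTq_j)^{\ast}T(q_iq_j)$, and $S_{1,n}^{\ast}$ is precisely the first factor on the right. Feeding these identities into the inequality of Theorem \ref{M7} produces the claimed bound.

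The step requiring the most care is the passage from the general inequality $TP\limsup_n Q_{v_n}e\ge\limsup_n P(S_{1,n}^{\ast}K_{1,n}^{2})$ to the projection-free statement. This is legitimate exactly in the regime of the ``in particular'' clause of Theorem \ref{M7}, namely when the finite part of $K_{1,\infty}=\sum_{i=1}^{\infty}e_{Tq_i}$ vanishes (equivalently $P=I$ on the relevant band), which is the Riesz-space analogue of the divergence hypothesis $\sum w_n\mathbb{P}(A_n)=\infty$. I would therefore either invoke that clause directly, or retain $P$ throughout and record the final inequality with $P$ applied, using $K_{1,\infty}^{\infty}=K_{q,\infty}^{\infty}$ from Proposition \ref{M6}(iii) to keep the right-hand side independent of the truncation index $q$. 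Everything else is routine bookkeeping with the already-established algebraic rules for the star map and the finite/infinite-part decomposition.
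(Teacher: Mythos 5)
Your proposal is correct and follows essentially the same route as the paper, which obtains the corollary precisely by substituting $v_n=(Tq_n)^{\ast}$ into Theorem \ref{M7} and simplifying $Q_{v_n}e$, $K_{1,n}$ and $S_{1,n}$ exactly as you do (using $(Tq_i)^{\ast}Tq_i=e_{Tq_i}$, the multiplicativity of the star map, and $Q_iQ_je=q_iq_j$). If anything you are more careful than the paper: your remark that the projection-free inequality is only legitimate under the ``in particular'' clause of Theorem \ref{M7} (i.e.\ when $K_{1,\infty}^{f}=0$, otherwise $P$ must be retained) flags a subtlety that the paper's one-line derivation passes over silently.
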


\begin{corollary}
Let $\left(  P_{n}\right)  _{n\geq1}$ be a sequence of parwise $T$-independent
band projections on $X.$ If $B$ is a band on $X^{u}$ such that%
\[%
{\textstyle\sum\limits_{n=1}^{\infty}}
TP_{n}e=\infty_{B}+u\qquad\text{with }u\in B^{d},
\]
then $P_{B}$ commutes with $T$ and%
\[
P_{B}=\limsup\limits_{n}P_{n}.
\]

\end{corollary}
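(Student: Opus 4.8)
The statement asks me to deduce a Borel--Cantelli-type conclusion from Theorem~\ref{M7}. The plan is to specialize Theorem~\ref{M7} to the case where the band projections $Q_n$ are the given pairwise $T$-independent projections $P_n$, taking the weights $v_n = e$ (the constant weight). With this choice, $K_{1,n} = \sum_{i=1}^{n} TP_i e$ and the divergence hypothesis reads $K_{1,\infty} = \sum_{n=1}^\infty TP_n e = \infty_B + u$ with $u \in B^d$. So the infinite part of $K_{1,\infty}$ is exactly $\infty_B$, which forces the band generated by $K_{1,\infty}^\infty$ to be $B$ itself; hence the projection $P$ appearing in Theorem~\ref{M7} is precisely $P_B$. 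The first thing I would record is therefore that $P = P_B$ and, by the opening line of the proof of Theorem~\ref{M7} (which invokes Lemma~\ref{YY2-i} and \cite[Proposition 7]{L-900}), that $P_B$ commutes with $T$.

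\textbf{Computing the right-hand side.} With $v_n = e$ one has $Q_{v_n} = P_{v_n} Q_n e = P_n e$, so $\limsup_n Q_{v_n}e = \limsup_n P_n e = (\limsup_n P_n)e$, and the left-hand side of Theorem~\ref{M7} becomes $T P_B \limsup_n P_n e$. The heart of the argument is then to evaluate the right-hand side $\limsup_n\bigl(P_B(S_{1,n}^\ast K_{1,n}^2)\bigr)$ under the independence hypothesis. Here pairwise $T$-independence gives $T(P_iP_j)e = TP_ie \cdot TP_je$ for $i \neq j$, while $TP_i^2 e = TP_ie$ since $P_i$ is a projection. This is the key computational step: it lets me compare $S_{1,n} = \sum_{i,j} TP_iP_j e$ with $K_{1,n}^2 = \bigl(\sum_i TP_ie\bigr)^2$. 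Writing $S_{1,n} = K_{1,n}^2 + \sum_i\bigl(TP_ie - (TP_ie)^2\bigr)$ (the diagonal correction), I expect that after multiplying by $P_B$ and passing to the $\limsup$, the divergence $K_{1,\infty}^\infty = \infty_B$ makes the diagonal correction negligible relative to $K_{1,n}^2$ inside the band $B$, so that $P_B S_{1,n}^\ast K_{1,n}^2 \to P_B e = \infty_B$-component; concretely I anticipate $\limsup_n P_B(S_{1,n}^\ast K_{1,n}^2) = P_B e$.

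\textbf{Concluding the reverse inequality.} Feeding these two computations into Theorem~\ref{M7} yields
\[
T\bigl(P_B \limsup_n P_n e\bigr) \geq P_B e = e_B,
\]
so that $TP_B\limsup_n P_n e \geq e_B$. Since $\limsup_n P_n \leq I$ gives $P_B\limsup_n P_n e \leq P_B e = e_B$ and $T$ is positive with $Te=e$, the opposite inequality $T(P_B\limsup_n P_n e)\leq Te_B = e_B$ also holds (using $TP_B = P_BT$); combining gives $T(P_B\limsup_n P_n e) = e_B$. The final step is to upgrade this scalar-level equality into the projection identity $P_B = \limsup_n P_n$: because $P_B\limsup_n P_n e$ is a component of $e_B$ whose image under the strictly positive (faithful) operator $T$ equals $e_B$ itself, faithfulness of $T$ forces $P_B\limsup_n P_n e = e_B$, i.e.\ $e_B \leq \limsup_n P_n e$, whence $P_B \leq \limsup_n P_n$; together with $\limsup_n P_n \leq P_B$ (which follows from $K_{1,\infty}^\infty = \infty_B$ confining the events asymptotically to $B$) I obtain $P_B = \limsup_n P_n$.

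\textbf{Main obstacle.} The step I expect to be genuinely delicate is the evaluation $\limsup_n P_B(S_{1,n}^\ast K_{1,n}^2) = e_B$ under pairwise independence: one must show the diagonal term $\sum_i (TP_ie - (TP_ie)^2)$ does not spoil the ratio $S_{1,n}^\ast K_{1,n}^2$ in the limit. This is the Riesz-space analogue of the classical second-moment estimate in the Erd\H{o}s--R\'enyi/Borel--Cantelli argument, and controlling it requires carefully tracking finite versus infinite parts inside the band $B$ and invoking the convergence results of Lemma~\ref{M5} and Proposition~\ref{M6}; the faithfulness (strict positivity) of $T$ needed at the very end must also be justified from the conditional-expectation axioms.
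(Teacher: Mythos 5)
Your overall architecture coincides with the paper's: specialize Theorem \ref{M7} with $Q_n=P_n$ and $v_n=e$, identify $P=P_B$ from $K_{1,\infty}^{\infty}=\infty_B$, use pairwise $T$-independence to compare $S_{1,n}$ with $K_{1,n}^{2}$, and finish by playing strict positivity of $T$ against the easy inequality $\limsup_n P_n\leq P_B$. The trouble is that the two load-bearing steps are asserted rather than proven, and they are precisely where the content of the corollary lies, so what you have is an outline of the correct strategy rather than a proof.

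First, you claim $\limsup_n P_n\leq P_B$ ``follows from $K_{1,\infty}^{\infty}=\infty_B$ confining the events asymptotically to $B$''; this is not an argument. The paper proves it by applying $P_B^{d}$ to the divergence hypothesis: $\sum_n TP_B^{d}P_ne=P_B^{d}\sum_n TP_ne=u\in X^{u}$ is finite, so the Riesz-space first Borel--Cantelli lemma (\cite[Lemma 26]{L-900}) yields $P_B^{d}\limsup_n P_n=0$. Second, and more seriously, the evaluation $\limsup_n P_B\left(S_{1,n}^{\ast}K_{1,n}^{2}\right)=P_Be$ --- which you yourself flag as the ``main obstacle'' and merely ``anticipate'' --- is exactly the Riesz-space second-moment estimate, and your pointer to Lemma \ref{M5} and Proposition \ref{M6} is not how the paper settles it. The actual mechanism is: from pairwise independence, $S_{1,n}\leq K_{1,n}^{2}+K_{1,n}$, hence
\[
P_BP_{S_{1,n}}=P_BS_{1,n}^{\ast}S_{1,n}\leq P_BS_{1,n}^{\ast}K_{1,n}^{2}+P_BS_{1,n}^{\ast}K_{1,n};
\]
the correction term dies because Proposition \ref{YY2-r}, applied to the increasing sequences $K_{1,n}\uparrow K_{1,\infty}$ and $S_{1,n}\uparrow S_{1,\infty}$ with $K_{1,n}^{2}\leq S_{1,n}$, gives $S_{1,n}^{\ast}K_{1,n}\overset{o}{\longrightarrow}S_{1,\infty}^{\ast}K_{1,\infty}^{f}$, and $P_BK_{1,\infty}^{f}=0$ since $K_{1,\infty}^{\infty}=\infty_B$; finally $P_BP_{S_{1,\infty}}=P_B$ because $\infty_B=K_{1,\infty}^{\infty}\leq S_{1,\infty}^{\infty}$. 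Together these give $P_Be\leq\limsup_n P_B\left(S_{1,n}^{\ast}K_{1,n}^{2}\right)$, which is the inequality your proof needs. Supplying these two arguments --- the citation of the first Borel--Cantelli lemma and the star-map convergence of Proposition \ref{YY2-r} --- is the whole point of the exercise, and without them the proposal has a genuine gap at its core.
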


\begin{proof}
We know that $P_{B}$ commutes with $T$ (see the proof of Theorem \ref{M7}).
Now observe that
\[%
{\textstyle\sum\limits_{n=1}^{\infty}}
TP_{B}^{d}P_{n}e=P_{B}^{d}%
{\textstyle\sum\limits_{n=1}^{\infty}}
TP_{n}e=u\in X^{u}.
\]
Hence, accorfing to \cite[Lemma 26]{L-900}, $P_{B}^{d}\limsup Q_{n}=0,$ which
implies the inequality%
\[
\limsup P_{n}\leq P_{B}.
\]
\ \ \ The reverse inequality will follow from Theorem \ref{M7} Indeed if we
apply the theorem with $v_{n}=e$ we get%
\[
T\limsup\limits_{n}Q_{n}=TP_{B}\limsup\limits_{n}Q_{n}e\geq\limsup
\limits_{n}\left(  P_{B}\left(  S_{1,n}^{\ast}K_{1,n}^{2}\right)  \right)
\text{ \ \ \ \ \textbf{(A1)}}%
\]
Moreover, we have by $T$-independance,%
\[
S_{1,n}\leq K_{1,n}^{2}+K_{1,n},
\]
which yields%
\[
P_{S_{1,n}}=S_{1,n}^{\ast}S_{1,n}\leq S_{1,n}^{\ast}K_{1,n}^{2}+S_{1,n}^{\ast
}K_{1,n},
\]
and then
\[
P_{B}P_{S_{1,n}}\leq P_{B}S_{1,n}^{\ast}K_{1,n}^{2}+P_{B}S_{1,n}^{\ast}%
K_{1,n}.
\]
Taking the $\limsup$ over $n$ in this last inequality gives%
\[
P_{B}P_{S_{1,\infty}}\leq\limsup\left(  P_{B}S_{1,n}^{\ast}K_{1,n}^{2}\right)
+\limsup\left(  P_{B}S_{1,n}^{\ast}K_{1,n}\right)  .
\]
Since $\infty_{B}=K_{1,\infty}^{\infty}\leq S_{1,\infty}^{\infty}$ we have
$P_{B}P_{S_{1,\infty}}=P_{B}P_{S_{1,\infty}^{\infty}}=P_{B}.$ So%
\begin{equation}
P_{B}P_{S_{1,\infty}}=P_{B}P_{S_{1,\infty}^{f}}+P_{B}P_{S_{1,\infty}^{\infty}%
}=P_{B}P_{S_{1,\infty}^{\infty}}=P_{B}. \label{C2}%
\end{equation}
Now, $K_{1,n}$ and $S_{1,n}$ are increasing with $K_{1,n}^{2}\leq S_{1,n},$ we
obtain thanks to Proposition \ref{YY2-r},%
\begin{equation}
S_{1,n}^{\ast}K_{1,n}\overset{o}{\longrightarrow}S_{1,\infty}^{\ast
}K_{1,\infty}^{f}, \label{C3}%
\end{equation}
which gives%
\[
P_{B}S_{1,n}^{\ast}K_{1,n}\overset{o}{\longrightarrow}P_{B}S_{1,\infty}^{\ast
}K_{1,\infty}^{f}=0.
\]
Combining (\ref{C2}) and (\ref{C3}) we derive that%
\begin{align*}
TP_{B}e  &  =P_{B}e=P_{B}P_{S_{1,\infty}}e\leq\limsup\left(  P_{B}%
S_{1,n}^{\ast}K_{1,n}^{2}\right) \\
&  \leq TP_{B}\limsup\limits_{n}P_{n}e.
\end{align*}
Applying Theorem \ref{M7} gives%
\[
TP_{B}e\leq T\limsup\limits_{n}P_{n}e.
\]
Therefore, since $T$ is strictly positive we conclude that%
\[
P_{B}e=\limsup\limits_{n}P_{n}e.
\]
his complete the proof.
\end{proof}

\end{document}